\documentclass[a4paper,11pt]{amsart}
\usepackage[mathscr]{eucal}
\usepackage{fancybox}
\usepackage{amscd}
\usepackage{amsmath}
\usepackage{amssymb}
\usepackage{amsthm}
\usepackage{float}
\usepackage[pdftex]{graphicx}
\usepackage{tikz}
\usepackage[all, ps, dvips, pdftex]{xy}
\usepackage{color}
\usepackage[hidelinks, hyperfootnotes=false]{hyperref}
\usepackage{array}
\usepackage{amscd}

\DeclareFontFamily{U}{rsfs}{%
\skewchar\font127}
\DeclareFontShape{U}{rsfs}{m}{n}{%
<-6>rsfs5<6-8.5>rsfs7<8.5->rsfs10}{}
\DeclareSymbolFont{rsfs}{U}{rsfs}{m}{n}
\DeclareSymbolFontAlphabet
{\mathrsfs}{rsfs}
\DeclareRobustCommand*\rsfs{%
\@fontswitch\relax\mathrsfs}

\theoremstyle{plain}
\newtheorem{thm}{Theorem}[section]
\newtheorem*{thm*}{Theorem}
\newtheorem{prop}[thm]{Proposition}

\newtheorem{lem}[thm]{Lemma}

\newtheorem{defi}[thm]{Definition}
\newtheorem{rmk}[thm]{Remark}

\newtheorem{prop-defi}[thm]{Proposition-Definition}
\newtheorem{thm-defi}[thm]{Theorem-Definition}
\newtheorem{defi-thm}[thm]{Definition-Theorem}
\newtheorem{lem-defi}[thm]{Lemma-Definition}

\newtheorem*{question*}{Question}
\newtheorem{assum}[thm]{Assumption}

\newtheorem{setup-def}[thm]{Setup-Definition}

\newdimen\argwidth
\def\db[#1\db]{
 \setbox0=\hbox{$#1$}\argwidth=\wd0
 \setbox0=\hbox{$\left[\box0\right]$}
  \advance\argwidth by -\wd0
 \left[\kern.3\argwidth\box0 \kern.3\argwidth\right]}

\newcommand{\aA}{\mathcal{A}}
\newcommand{\bB}{\mathcal{B}}
\newcommand{\cC}{\mathcal{C}}

\newcommand{\eE}{\mathcal{E}}
\newcommand{\fF}{\mathcal{F}}
\newcommand{\gG}{\mathcal{G}}
\newcommand{\hH}{\mathcal{H}}
\newcommand{\iI}{\mathcal{I}}

\newcommand{\kK}{\mathcal{K}}

\newcommand{\mM}{\mathcal{M}}
\newcommand{\nN}{\mathcal{N}}
\newcommand{\oO}{\mathscr{O}}
\newcommand{\pP}{\mathcal{P}}

\newcommand{\vV}{\mathcal{V}}
\newcommand{\wW}{\mathcal{W}}

\newcommand{\zZ}{\mathcal{Z}}

\newcommand{\Ob}{\mathcal{O}b}

\newcommand{\bL}{\mathbb{L}}
\newcommand{\bQ}{\mathbb{Q}}

\newcommand{\fm}{\mathfrak{m}}

\newcommand{\fc}{\mathfrak{c}}
\newcommand{\fn}{\mathfrak{n}}

\renewcommand{\tilde}{\widetilde}

\newcommand{\tU}{\tilde{U}}

\newcommand{\coker}{\mathop{\rm coker}\nolimits}

\newcommand{\lr}{\longrightarrow}

\newcommand{\Hom}{\mathop{\rm Hom}\nolimits}

\newcommand{\id}{\textrm{id}}

\newcommand{\Spec}{\mathop{\rm Spec}\nolimits}

\newcommand{\Coh}{\mathop{\rm Coh}\nolimits}

\newcommand{\Sym}{\mathop{\rm Sym}\nolimits}

\newcommand{\bC}{\mathbb{C}}

\newcommand{\bP}{\mathbb{P}}

\def\lal{_\lambda}
\def\vir{\mathrm{\vir}}
\def\loc{\mathrm{\loc}}

\def\sun{\underline{\sigma}}
\def\labc{_{\alpha \beta \gamma}}

\def\tE{\tilde{E}}

\def\lra{\longrightarrow}
\def\ti{\tilde}

\def\sO{\mathscr O}
\def\CC{\mathbb C}

\def\lalp{_\alpha}
\def\sub{\subset}

\def\virt{^{\mathrm{vir}}}

\def\beq{\begin{equation}}
\def\eeq{\end{equation}}

\def\lalp{_\alpha }
\def\tE{\tilde{E}}
\def\tphi{\tilde{\phi}}
\def\lab{_{\alpha\beta}}
\def\lbet{_\beta}
\def\mapright#1{\,\smash{\mathop{\lra}\limits^{#1}}\,}
\def\mapleft#1{\,\smash{\mathop{\longleftarrow}\limits^{#1}}\,}
\def\rred{{\mathrm{red}}}
\def\loc{{\mathrm{loc}}}

\makeatletter
 
  \@addtoreset{equation}{section}
\makeatother

\setcounter{tocdepth}{1}
\makeatletter
\def\@tocline#1#2#3#4#5#6#7{\relax
  \ifnum #1>\c@tocdepth 
  \else
    \par \addpenalty\@secpenalty\addvspace{#2}%
    \begingroup \hyphenpenalty\@M
    \@ifempty{#4}{%
      \@tempdima\csname r@tocindent\number#1\endcsname\relax
    }{%
      \@tempdima#4\relax
    }%
    \parindent\z@ \leftskip#3\relax \advance\leftskip\@tempdima\relax
    \rightskip\@pnumwidth plus4em \parfillskip-\@pnumwidth
    #5\leavevmode\hskip-\@tempdima
      \ifcase #1
       \or\or \hskip 1em \or \hskip 2em \else \hskip 3em \fi%
      #6\nobreak\relax
    \hfill\hbox to\@pnumwidth{\@tocpagenum{#7}}\par
    \nobreak
    \endgroup
  \fi}
\makeatother

\title[Localization Formulas for APOT]{Localizing Virtual Structure Sheaves for Almost Perfect Obstruction Theories}

\author{Young-Hoon Kiem}
\address{Department of Mathematical Sciences and Research Institute of Mathematics, Seoul National University, Seoul 08826, Korea}
\email{kiem@snu.ac.kr}

\author{Michail Savvas}
\address{Department of Mathematics, University of California, San Diego, La Jolla, CA 92093, USA}
\email{msavvas@ucsd.edu}

\thanks{YHK was partially supported by Samsung Science and Technology Foundation grant SSTF-BA1601-01.}

\begin{document}

\maketitle

\begin{abstract} 
Almost perfect obstruction theories were introduced in an earlier paper by the authors as the appropriate notion in order to define virtual structure sheaves and $K$-theoretic invariants for many moduli stacks of interest, including $K$-theoretic Donaldson-Thomas invariants of sheaves and complexes on Calabi-Yau threefolds. The construction of virtual structure sheaves is based on the $K$-theory and Gysin maps of sheaf stacks.

In this paper, we generalize the virtual torus localization and cosection localization formulas and their combination to the setting of almost perfect obstruction theory. To this end, we further investigate the $K$-theory of sheaf stacks and its functoriality properties. As applications of the localization formulas, we establish a $K$-theoretic wall crossing formula for simple $\bC^\ast$-wall crossings and define $K$-theoretic invariants refining the Jiang-Thomas virtual signed Euler characteristics.
\end{abstract}

\tableofcontents

\def\cG{{\mathcal{G}}}
\def\cF{{\mathcal{F}}}
\def\cK{{\mathcal{K}}}
\def\DM{Deligne-Mumford }
\def\sO{{\mathscr O}}
\def\Coh{\mathrm{Coh} }
\def\coh{\underline{\mathrm{Coh}} }
\def\QQ{\mathbb{Q} }
\def\PP{\mathbb{P} }
\def\AA{\mathbb{A} }

\section{Introduction}

Enumerative geometry is the study of counts of geometric objects subject to a set of given conditions. More often than not, the moduli stacks parameterizing the objects of interest are highly singular, have many components of dimension different from the expected dimension and do not behave well under deformation.

To address these issues, Li-Tian \cite{LiTian} and Behrend-Fantechi \cite{BehFan} developed the theory of virtual fundamental cycles, which have been instrumental in defining and investigating several algebro-geometric enumerative invariants of great importance, such as Gromov-Witten \cite{BehGW}, Donaldson-Thomas \cite{Thomas} and Pandharipande-Thomas \cite{PT1} invariants, and are still one of the major components in modern enumerative geometry.

Any Deligne-Mumford stack $X$ is equipped with its intrinsic normal cone $\cC_X$ which locally for an \'{e}tale map $U \to X$ and a closed embedding $U \hookrightarrow V$ into a smooth scheme $V$ is the quotient stack $[C_{U/V} / T_V|_U]$ (cf. \cite{BehFan}). A perfect obstruction theory $\phi \colon E \to \bL_X$ gives an embedding of $\cC_X$ into the vector bundle stack $\eE_X = h^1 / h^0 ( E^\vee )$ and the virtual fundamental cycle is defined as the intersection of the zero section $0_{\eE_X}$ with $\cC_X$
\begin{align*}
[X]\virt := 0_{\eE_X}^! [\cC_X] \in A_{\mathrm{v. dim.}} (X).
\end{align*}

Recently, a lot of interest has been generated towards refinements of enumerative invariants that go beyond numbers or the intersection theory of cycles. Motivated by theoretical physics and geometric representation theory, it is in particular desirable to obtain such a refinement in $K$-theory (cf. for example \cite{Okou2, Okou1}).

When the moduli stack $X$ admits a perfect obstruction theory with a global presentation $E = [E^{-1} \to E^0]$, where $E^{-1}, E^0$ are locally free sheaves on $X$, one can define the virtual structure sheaf as
\begin{align*}
    [\sO_X\virt ]:= 0_{E_1}^! [\oO_{C_1}] \in K_0(X)
\end{align*}
where $E_1 = (E^{-1})^\vee$ and $C_1 = \cC_X \times_{\eE_X} E_1$.

However, there are many moduli stacks of interest which do not admit perfect obstruction theories, including moduli of simple complexes \cite{Inaba, Lieblich} and desingularizations of stacks of semistable sheaves and perfect complexes on Calabi-Yau threefolds \cite{KLS, Sav}.

In order to resolve this, in our previous paper \cite{KiemSavvas} we introduced a relaxed version of perfect obstruction theories, called almost perfect obstruction theories, which arise in the above moduli stacks. An almost perfect obstruction theory has an obstruction sheaf $\Ob_X$, which is the analogue of the sheaf $h^1(E^\vee)$, and induces an embedding of the coarse moduli sheaf $\fc_X$ of $\cC_X$ into $\Ob_X$ enabling us to define the virtual structure sheaf of $X$ as
\begin{align} \label{loc 1.1}
    [\sO_X\virt] := 0_{\Ob_X}^! [\oO_{\fc_X}] \in K_0(X).
\end{align}

Several techniques have been developed to handle virtual fundamental cycles and virtual structure sheaves arising from perfect obstruction theories on Deligne-Mumford stacks, such as the virtual torus localization of Graber-Pandharipande \cite{GrabPand, Qu}, the cosection localization of Kiem-Li \cite{KiemLiCosection, KiemLiKTheory}, virtual pullback \cite{Manolache, Qu} and wall crossing formulas \cite{KiemLi}. Often, combining these (cf. for example \cite{ChangKiemLi}) can be quite effective.

The aim of the present paper is to generalize the virtual torus localization and cosection localization formulas of virtual structure sheaves and their combination  to the setting of almost perfect obstruction theory.

\medskip

Roughly speaking, an almost perfect obstruction theory $\phi$ on a Deligne-Mumford stack $X$ consists of perfect obstruction theories
$$ \phi\lalp \colon E\lalp \lr \bL_{U\lalp} $$
on an \'{e}tale cover $\{ U\lalp \to X \}$ of $X$ satisfying appropriate combatibility conditions (cf. Definition~\ref{APOT}), which ensure that we have an obstruction sheaf $\fF = \Ob_X$ and an embedding of the coarse intrinsic normal cone $\fc_X$ into $\fF$, so that the virtual structure sheaf $[\oO_X\virt]$ can be defined as in \eqref{loc 1.1} above.

The definition of $[\oO_X\virt]$ is based crucially on the development of a $K$-theory group $K_0(\fF)$ of coherent sheaves on the sheaf stack $\fF$, so that $[\oO_{\fc_X}] \in \Coh(\fF)$, and the construction of the Gysin map $0_\fF^! \colon K_0(\fF) \to K_0(X)$ in \cite{KiemSavvas}.

In this paper, we investigate the $K$-theory of sheaf stacks in more detail and establish an appropriate version of descent theory for the category $\Coh(\fF)$ of coherent sheaves on $\fF$ and several functorial behaviors for $\Coh(\fF)$ and $K_0(\fF)$ and their properties. These include:
\begin{enumerate}
    \item Pullback functors for surjective homomorphisms $\gG \to \fF$.
    \item Pullback and pushforward functors for injective homomorphisms $\gG \to \fF$ with locally free quotient $L = \fF / \gG$.
    \item Pullback and pushforward functors for morphisms $\tau \colon Y \to X$ of the base.
\end{enumerate}

\medskip

With these well developed, after appropriate modifications, the standard arguments proving the virtual torus localization formula, cosection localization formula and their combination work in our setting.

Thus, when $X$ admits a $T=\bC^\ast$-action, and the cover $\lbrace U\lalp \to X \rbrace$ and obstruction theories $\phi\lalp$ are $T$-equivariant, we prove that (Theorem~\ref{46})
$$ [\oO_X\virt] = \iota_\ast \frac{[\oO_F\virt]}{e(N\virt)} \in K_0^T(X) \otimes_{\bQ[t,t^{-1}]} \bQ(t)$$
where $\iota \colon F \to X$ is the inclusion of the $T$-fixed locus, which admits an induced almost perfect obstruction theory, and we assume that the virtual normal bundle $N\virt$ of $F$ has a global resolution $[ N_0 \to N_1]$ by locally free sheaves.

Moreover, if there is a cosection $\sigma \colon \Ob_X \to \oO_X$ with vanishing locus $X(\sigma)$, we prove (Definition~\ref{42}, Proposition~\ref{43}) that the virtual structure sheaf $[\oO_X\virt]$ localizes canonically to an element $[\oO_{X,\loc}\virt] \in K_0(X(\sigma))$.

In the presence of a $T$-action such that the almost perfect obstruction theory is $T$-equivariant and a $T$-invariant cosection $\sigma$, we show that the virtual torus localization formula holds for the cosection localized virtual structure sheaves of $X$ and $F$ (Theorem~\ref{55}).

\medskip

An immediate application of the virtual torus localization formula is a $K$-theoretic wall crossing formula for simple $\bC^\ast$-wall crossings (Theorem~\ref{theorem 7.2}).

Another application is a $K$-theoretic refinement of the Jiang-Thomas theory of virtual signed Euler characteristics \cite{JiangThomas}. If $X$ is a Deligne-Mumford stack with a perfect obstruction theory with obstruction sheaf $\fF = \Ob_X$, then we show (Theorem~\ref{theorem 7.4}) that the dual obstruction cone $N = \Spec_X (\Sym \fF)$ admits a (symmetric) almost perfect obstruction theory with obstruction sheaf $\Ob_N = \Omega_N$. This is $T$-equivariant for the natural $T$-action with fixed locus $\iota \colon X \to N$. Additionally, there is a cosection $\sigma \colon \Ob_N \to \oO_N$. When $X$ is proper, we obtain (Theorem~\ref{thm 7.6}) the $K$-theoretic invariants
$$\chi \left( [\oO_{N,\loc}\virt] \right) \in \bQ, \quad \chi_t \left( \frac{[\oO_X\virt]}{e(E^\vee)}\right) \in \bQ(t)$$
refining the Jiang-Thomas virtual signed Euler characteristics.

\medskip \subsection*{Layout of the paper} \S\ref{background section} collects necessary background on the $K$-theory of sheaf stacks, almost perfect obstruction theories and virtual structure sheaves that we need from \cite{KiemSavvas}. In \S\ref{functoriality section} we study the descent theory and functoriality properties of coherent sheaves on sheaf stacks, which are then used throughout the rest of the paper. \S\ref{Scos} treats the cosection localization formula, \S\ref{Torus localization section} treats the virtual torus localization formula and \S\ref{combination section} their combination. In \S\ref{applications section} we apply these formulas to prove the $K$-theoretic wall crossing formula for simple $\bC^\ast$-wall crossings and construct a $K$-theoretic refinement of the Jiang-Thomas theory of virtual signed Euler characteristics. Finally, in the Appendix, we give the proof of the deformation invariance of the cosection localized virtual structure sheaf.

\medskip \subsection*{Acknowledgements} We would like to thank Dan Edidin for kindly answering our questions on localization in equivariant $K$-theory.

\medskip \subsection*{Notation and conventions} 
Everything in this paper is over the field $\bC$ of complex numbers. All stacks are of finite type and Deligne-Mumford stacks are separated. 

If $E$ is a locally free sheaf on a Deligne-Mumford stack $X$, we will use the term “vector bundle” to refer to its total space. If $\fF$ is a coherent sheaf on a Deligne-Mumford stack $X$, we will use the same letter to refer to the associated sheaf stack.

For a morphism $f:Y\to X$ of stacks and a coherent sheaf $\fF$ on $X$, its pullback $f^*\fF$ is sometimes denoted by $\fF|_Y$ when the map $f$ is clear from the context.
The bounded derived category of coherent sheaves on a stack $X$ is denoted by $D(X)$ and $\bL_{X/S}=L^{\ge -1}_{X/S}$ denotes the truncated cotangent complex for a morphism $X \to S$.
$T$ typically denotes the torus $\bC^*$.

\bigskip

\section{$K$-Theory on Sheaf Stacks, Almost Perfect Obstruction Theory and Virtual Structure Sheaf} \label{background section}

In \cite{KiemSavvas}, we introduced the notion of coherent sheaves on a sheaf stack $\fF$, defined the $K$-theory of $\fF$ and constructed a Gysin map $0^!_\cF$ of $\fF$, which enabled us to construct the virtual structure sheaf $[\sO_X\virt]\in K_0(X)$ for a \DM stack $X$ equipped with an almost perfect obstruction theory.
 
In this preliminary section, we collect necessary ingredients from \cite{KiemSavvas}.

\medskip \subsection{Sheaf stacks, local charts and common roofs} In what follows, $X$ will denote a Deligne-Mumford stack and $\fF$ a coherent sheaf on $X$.  

\begin{defi} \emph{(Sheaf stack)} The \emph{sheaf stack} associated to $\fF$ is the stack that to every morphism $\rho \colon W \to X$ from a scheme $W$ associates the set $\Gamma(W, \rho^\ast \fF)$. 
\end{defi}
 
By abuse of notation, we denote by $\fF$ the sheaf stack associated to a coherent sheaf $\fF$ on $X$.  
A sheaf stack is not algebraic in general and we need an appropriate notion of local charts for geometric constructions. 

\begin{defi} \emph{(Local chart)} \label{local chart}
A \emph{local chart} $Q=(U, \rho, E, r_E)$ for the sheaf stack $\fF$ consists of
\begin{enumerate}
\item an \'etale morphism $\rho:U\to X$ from a scheme $U$, and 
\item a surjective homomorphism $r_E:E\to \rho^* \fF=\fF|_U$ of coherent sheaves on $U$ from a locally free sheaf $E$ on $U$.
\end{enumerate}
We will call $U$ the \emph{base} of the chart $Q$. 
If $U$ is affine and $E$ is free, then the local chart $Q = (U, \rho, E, r_E)$ is called \emph{affine}.
\end{defi}

\begin{defi} \emph{(Morphism between local charts)}
Let $Q=(U, \rho, E, r_E)$ and $Q'=(U', \rho', E', r_{E'})$ be two local charts for $\fF$. A \emph{morphism} $\gamma \colon Q \to Q'$ is the pair $(\rho_\gamma, r_\gamma)$ of an \'{e}tale morphism $\rho_\gamma \colon U \to U'$ and a surjection $r_\gamma \colon E \to \rho_\gamma^* E'$ of locally free sheaves, such that the diagrams
\begin{align*}
\xymatrix{
U \ar[r]^-{\rho_\gamma} \ar[dr]_-{\rho} & U' \ar[d]^-{\rho'} \\
& X
} \ \textrm{} \ \xymatrix{
E \ar[r]^-{r_\gamma} \ar[dr]_-{r_E} & \rho_\gamma^* E' \ar[d]^-{\rho_\gamma^* r_{E'}} \\
& \fF|_U
}
\end{align*}
are commutative.

We say that $Q$ is a \emph{restriction} of $Q'$ and write $Q = Q'|_U$ if $E = \rho_\gamma^* E'$ and $r_\gamma$ is the identity morphism.
\end{defi}

The notion of a common roof enables us compare two local charts on $\fF$ with the same base $\rho \colon U \to X$.

\begin{defi} \emph{(Common roof)} \label{common roof defi}
Let $r:\cG\to \fF$ and $r':\cG'\to \fF$ be two surjective homomorphisms of coherent sheaves on a scheme $U$. Their fiber product is defined by
\beq\label{1}\cG \times_{\fF} \cG':=\mathrm{ker}\left( \cG\oplus \cG'\xrightarrow{(r,-r')} \fF\oplus \fF\xrightarrow{+} \fF \right)\eeq 
and we have a commutative diagram
\[\xymatrix{
\cG\times_{\fF}\cG'\ar[r]\ar[d] & \cG\ar[d] \\
\cG'\ar[r] & \cF
}\]
of surjective homomorphisms, which is 
universal among such diagrams of surjective homomorphisms in the obvious sense. 

Given two charts $Q=(U, \rho, E, r_E)$ and $Q'=(U, \rho, E', r_{E'})$ with the same quasi-projective base $U$, we can pick a surjective homomorphism $$W\lra E\times_{\fF|_U}E'$$ from a locally free sheaf $W$. Denoting the induced surjection $W\to  \fF|_U$ by $r_W$, we obtain a local chart $(U,\rho, W, r_W)$, which we call a \emph{common roof} of $Q$ and $Q'$.
\end{defi}

More generally, given two charts $Q=(U,\rho, E, r_E)$ and $Q'=(U',\rho',E',r_{E'})$ of the sheaf stack $\cF$, we let $V=U\times_XU'$ and have two local charts $Q|_V$ and $Q'|_V$ with the same base. A common roof of $Q|_V$ and $Q'|_V$ is called a \emph{common roof} of $Q$ and $Q'$.

\medskip \subsection{Coherent sheaves on a sheaf stack $\fF$} 

A \emph{coherent sheaf} $\aA$ on $\fF$ is an assignment to every local chart $Q = (U, \rho, E, r_E)$ of a coherent sheaf $\aA_Q$ on the scheme $E$ (in the \'{e}tale topology) such that for every morphism $\gamma \colon Q \to Q'$ between local charts there exists an isomorphism 
\beq\label{y2} r_\gamma^* \left( \rho_\gamma^*\aA_{Q'} \right) \lr \aA_Q\eeq
which satisfies the usual compatibilities for composition of morphisms. 
Note that we abusively write $\rho_\gamma^* \aA_{Q'}$ for the pullback of $\aA_{Q'}$ to $\rho_\gamma^* E'$ via the morphism of bundles $\rho_\gamma^* E' \to E'$ induced by $\rho_\gamma$.
A \emph{quasicoherent sheaf} on a sheaf stack is defined likewise. 

A homomorphism $f:\aA\to \bB$ of (quasi)coherent sheaves on $\fF$ is the data of a homomorphism
$f_Q:\aA_Q\to \bB_Q$ of (quasi)coherent sheaves on $E$ for each local chart $Q=(U,\rho,E,r_E)$ such that for every morphism $\gamma:Q\to Q'$ of local charts,
the diagram 
$$\xymatrix{
r_\gamma^* \left( \rho_\gamma^*\aA_{Q'} \right) \ar[r]\ar[d]_{f_{Q'}}& \aA_Q\ar[d]^{f_Q}\\
r_\gamma^* \left( \rho_\gamma^*\bB_{Q'} \right) \ar[r] & \bB_Q
}$$
is commutative where the horizontal arrows are \eqref{y2}. We say that a homomorphism $f:\aA\to \bB$ is an isomorphism if $f_Q$ is an isomorphism for each local chart $Q$. 

Exact sequences and the $K$-group $K_0(\fF)$ were defined in \cite{KiemSavvas} as follows.
\begin{defi} \emph{(Short exact sequence)} \label{ses}
Let $\aA, \bB, \cC$ be coherent sheaves on the sheaf stack $\fF$. A sequence 
\begin{align*}
    0 \lr \aA \lr \bB \lr \cC \lr 0
\end{align*}
of homomorphisms of coherent sheaves on $\fF$  
is \emph{exact} if for every local chart $Q=(U, \rho, E, r_E)$ on $\fF$ the sequence
\begin{align*}
    0 \lr \aA_Q \lr \bB_Q \lr \cC_Q \lr 0
\end{align*}
is an exact sequence of coherent sheaves on the scheme $E$.
\end{defi} 
Note that the morphism
$$E\mapright{r_\gamma} \rho_\gamma^*E'=U\times_{U'}E'\lra E'$$
is smooth and hence flat.  
We can likewise define the \emph{kernel} and \emph{cokernel} of a homomorphisms $f:\aA\to \bB$ of coherent sheaves on the sheaf stack $\fF$. Thus coherent and quasicoherent sheaves on $\fF$  form abelian categories $$\mathrm{Coh}(\fF) \subset \mathrm{QCoh}(\fF).$$ 

\begin{defi} 
The \emph{$K$-group} of coherent sheaves on $\fF$ is the group generated by the isomorphism classes $[\aA]$ of coherent sheaves $\aA$ on $\fF$, with relations generated by $[\bB] = [\aA] + [\cC]$ for every short exact sequence $$0 \lr \aA \lr \bB \lr \cC \lr 0.$$
\end{defi}
In other words, $K_0(\cF)$ is the Grothendieck group of the abelian category $\mathrm{Coh}(\cF)$. 

If $\fF=E$ is locally free, so that $\fF$ is an algebraic stack, then the above definitions recover the standard notions of short exact sequences and $K_0(\fF)$ for the vector bundle $E$.

\medskip \subsection{$K$-theoretic Gysin and pullback maps} \label{Koszul homology section} 
Let $Q = (U, \rho, E, r_E)$ be a local chart for the sheaf stack $\fF$ and denote the vector bundle projection map $E \to U$ by $\pi_E$. The tautological section of the pullback $\pi_E^* E$ induces an associated Koszul complex $$\cK(E):=\wedge^\bullet \pi_E^* E^\vee$$ that resolves the structure sheaf $\oO_U$ of the zero section of $\pi_E$.

\begin{defi}
For any local chart $Q = (U, \rho, E, r_E)$ and a coherent sheaf $\aA$ on $\fF$, the \emph{$i$-th Koszul homology sheaf} $$\hH_Q^i(\aA)\in \mathrm{Coh}(U)$$ of $\aA$ with respect to $Q$ is defined as the homology of the complex 
$$\cK(E)\otimes_{\sO_E} \aA_Q=\wedge^\bullet \pi_E^* E^\vee \otimes_{\oO_E} \aA_Q$$ 
in degree $-i$.
\end{defi}

Using common roofs and the standard descent theory for coherent sheaves on algebraic stacks, the following is proven in \cite{KiemSavvas}.

\begin{thm-defi}
Let $\aA$ be a coherent sheaf on a sheaf stack $\fF$ over a Deligne-Mumford stack $X$. The coherent sheaves $\hH_Q^i(\aA)\in \mathrm{Coh}(U)$ glue canonically to a coherent sheaf $\hH_\cK^i(\aA)\in \mathrm{Coh}(X)$ on $X$, which is defined to be the $i$-th Koszul homology sheaf of $\aA$.
\end{thm-defi} 

The Koszul homology sheaves of $\aA$ were then used to define a $K$-theoretic Gysin map $0_\fF^! \colon K_0(\fF) \to K_0(X)$.

\begin{defi}\label{32} \emph{($K$-theoretic Gysin map)}
The $K$-theoretic Gysin map  is defined by the formula
\begin{align}\label{31}
 0_\fF^! \colon K_0(\fF) \lra K_0(X),\quad   0_{\fF}^! [\aA] = \sum_{i \geq 0} (-1)^i [ \hH_\cK^i(\aA) ] \in K_0(X)
\end{align}
where $\aA$ is a coherent sheaf on $\fF$.
\end{defi}

\medskip \subsection{Almost perfect obstruction theory and virtual structure sheaf}\label{S4}

A perfect obstruction theory on a morphism $X \to S$, where $X$ is a scheme and $S$ a smooth Artin stack, is a morphism
$$\phi \colon E \lra \bL_{X/S}$$
in $D(X)$, where $E$ is a perfect complex of amplitude $[-1,0]$, satisfying that $h^{-1}(\phi)$ is surjective and $h^0(\phi)$ is an isomorphism.

We refer to the sheaf $\Ob_X := h^1 (E^\vee)$ as the obstruction sheaf associated to the perfect obstruction theory $\phi$. Then $\phi$ induces a Cartesian diagram
\begin{align} \label{loc 3.1}
    \xymatrix{
    \cC_{X/S} \ar[r] \ar[d] & \nN_{X/S} := h^1 /h^0 (\bL_{X/S}^\vee) \ar[rr]^-{h^1 /h^0 (\phi^\vee)} \ar[d] && \eE := h^1 / h^0 (E^\vee) \ar[d] \\
    \fc_{X/S} \ar[r] & \fn_{X/S} = h^1(\bL_{X/S}^\vee) \ar[rr]_-{h^1(\phi^\vee)} && \Ob_X= h^1 (E^\vee)
    }
\end{align}
where $\cC_{X/S}$ and $\nN_{X/S}$ are the intrinsic normal cone and intrinsic normal sheaf of $X$ over $S$ respectively, while $\fc_{X/S}$ and $\fn_{X/S}$ are their coarse moduli sheaves. All the horizontal arrows are closed embeddings.

Some interesting moduli spaces in algebraic geometry such as the moduli space of derived category objects do not admit a perfect obstruction theory. 
In \cite{KiemSavvas}, we introduced a weaker notion with which moduli stacks for generalized Donaldson-Thomas invariants are equipped. 
\begin{defi} \emph{(Almost perfect obstruction theory)} \label{APOT}
Let $X \to S$ be a morphism, where $X$ is a Deligne-Mumford stack of finite presentation and $S$ is a smooth Artin stack of pure dimension. An \emph{almost perfect obstruction theory} $\phi$ consists of an \'{e}tale covering $\lbrace X_\alpha \to X \rbrace_{\alpha \in A}$ 
of $X$ and perfect obstruction theories $\phi_\alpha \colon E_\alpha \to \bL_{X_\alpha / S}$ of $X_\alpha$ over $S$ such that the following hold. 
\begin{enumerate}
\item For each pair of indices $\alpha, \beta$, there exists an isomorphism \begin{align*}
\psi_{\alpha \beta} \colon \Ob_{X_\alpha} \vert_{X_{\alpha\beta}} \lra \Ob_{X_\beta} \vert_{X_{\alpha\beta}}
\end{align*}
so that the collection $\lbrace \Ob_{X\lalp}=h^1(E_\alpha^\vee), \psi\lab \rbrace$ gives descent data of a sheaf $\Ob_X$, called the obstruction sheaf, on $X$.
\item For each pair of indices $\alpha, \beta$, there exists an \'{e}tale covering $\lbrace V_\lambda \to X\lab \rbrace_{\lambda \in \Gamma}$ of $X\lab=X_\alpha\times_XX_\beta$ such that for any $\lambda$, the perfect obstruction theories $E_\alpha \vert_{V_{\lambda}}$ and $E_\beta \vert_{V_{\lambda}}$ are isomorphic and compatible with $\psi\lab$. This means that there exists an isomorphism
\begin{align*}
    \eta_{\alpha\beta\lambda} \colon E_\alpha \vert_{V_{\lambda}} \lr E_\beta \vert_{V_{\lambda}}
\end{align*} 
in $D(V_\lambda)$ fitting in a commutative diagram
\begin{align} \label{compatibility data APOT}
    \xymatrix{
    E_\alpha \vert_{V_{\lambda}} \ar[d]_-{\phi\lalp|_{V_\lambda}} \ar[r]^-{ \eta_{\alpha\beta\lambda}} & E_\beta \vert_{V_{\lambda}} \ar[d]^-{\phi\lbet|_{V_\lambda}} \\
    \bL_{X_\alpha / S}|_{V_\lambda} \ar[r] \ar[dr]_\cong & \bL_{X_\beta / S}|_{V_\lambda} \ar[d]^\cong \\
    & \bL_{V_\lambda / S}
    }
\end{align}
which moreover satisfies $h^1(\eta_{\alpha\beta\lambda}^\vee) = \psi\lab^{-1}|_{V_\lambda}$.
\end{enumerate}
\end{defi}

Suppose that the morphism $X \to S$ admits an almost perfect obstruction theory. Then the definition implies that the closed embeddings given in diagram \eqref{loc 3.1}
$$h^1(\phi\lalp^\vee) \colon \fn_{U\lalp/S} \lr \Ob_{X\lalp}$$
glue to a global closed embedding
$$j_\phi \colon \fn_{X/S} \lr \Ob_{X}$$
of sheaf stacks over $X$.
Therefore, the coarse intrinsic normal cone stack $\fc_{X/S}$ embeds as a closed substack into the sheaf stack $\Ob_X$.

\begin{defi} \cite{KiemSavvas} \emph{(Virtual structure sheaf)} \label{virtual structure sheaf}
Let $X \to S$ be as above, together with an almost perfect obstruction theory $\phi \colon X \to S$. The \emph{virtual structure sheaf} of $X$ associated to $\phi$ is defined as
\begin{align*}
    [\oO_X\virt] := 0_{\Ob_X}^! [\oO_{\fc_{X/S}}] \in K_0(X).
\end{align*}
\end{defi}

\medskip

It is straightforward to generalize Definition \ref{APOT} to the relative setting of a morphism $X\to Y$ of \DM stacks, by considering an \'etale cover $\{Y_\alpha\to Y\}$ and relative perfect obstruction theories 
on the morphisms $X_\alpha=X\times_Y Y_\alpha\to Y_\alpha$. We leave the detail to the reader. 

\medskip

In the subsequent sections, we will generalize the torus localization theorem \cite{GrabPand, Qu}, the cosection localization theorem \cite{KiemLiCosection, KiemLiKTheory} and the wall crossing formula \cite{ChangKiemLi} to the virtual structure sheaves associated to \emph{almost perfect obstruction theories}.

\bigskip

\section{Functorial Behavior of Coherent Sheaves on Sheaf Stacks} \label{functoriality section}

In this section, we investigate the functorial behavior of coherent sheaves on the sheaf stack $\cF$. 
These functoriality properties will be our fundamental tools in generalizing the localization theorems to almost perfect obstruction theories in the subsequent sections. 

We will first establish a descent theory for coherent sheaves on $\cF$. Using this, we will prove that there are pullbacks of a coherent sheaf $\aA$ on $\cF$ by a morphism $\tau:Y\to X$ of the base and by a surjective homomorphism $f:\cG\to \cF$ of the sheaves as well as by an injective homomorphism $f:\cG\to \cF$ with locally free quotient $L=\cF/\cG$.  

Moreover, we will see that there are (higher) pushforwards of a coherent sheaf $\aA$ on $\tau^*\cF$ to coherent sheaves $R^i{\tau}_*\aA$ on $\cF$ for any proper morphism $\tau:Y\to X$. When $\cG\to \cF$ is an injective homomorphism of coherent sheaves on $X$ with locally free quotient, we will define the pushforward of a coherent sheaf on $\cG$ to $\cF$. These pushforwards and pullbacks satisfy the expected adjunction properties. 

\medskip \subsection{Coherent descent theory}

Let $f:\cG\to \cF$ be a surjective homomorphism of coherent sheaves on a \DM stack $X$. Then any local chart $$Q=(\rho:U\to X, r_E:E\twoheadrightarrow \cG)$$ of  $\cG$ induces the local chart
$$f_*Q=(\rho:U\to X, f_*r_E:E\twoheadrightarrow \cG\twoheadrightarrow \cF)$$
of $\cF$. Thus a coherent sheaf $\aA\in \mathrm{Coh}(\cF)$ gives a coherent sheaf $\aA_{f_*Q}\in \mathrm{Coh}(E)$ for every local chart $Q$ of $\cG$. It is easy to see that this assignment is a coherent sheaf on $\cG$, denoted by $f^*\aA$ or $\aA|_\cG$. In this way, we obtain a functor
\beq\label{9} f^*:\mathrm{Coh}(\cF)\lra \mathrm{Coh}(\cG)\eeq
and a homomophism 
\beq \label{10} f^*:K_0(\cF)\lra K_0(\cG)\eeq
since $f^*$ preserves exact sequences. From the construction, it is straightforward that if 
$g:\cG'\to \cG$ is another surjective homomorphism, then we have the equality 
\beq\label{6} (f\circ g)^*=g^*\circ f^*.\eeq

Using the above pullback functor, we can develop a descent formalism for sheaves on sheaf stacks. 
To begin with, it is convenient to extend the notion of local charts. 
\begin{defi} \emph{(Coherent chart)} Let $\cF$ be a coherent sheaf on a \DM stack $X$. 
A \emph{coherent chart} on the sheaf stack $\fF$ is the datum of a quadruple 
$$\pP = (U, \rho, \cG, r_\cG)$$
where $U$ is a scheme, $\rho \colon U \to X$ is an \'{e}tale morphism and $r_\cG \colon \cG \to \rho^* \fF$ is a surjective homomorphism of coherent sheaves on $U$.
\end{defi}
Of course, 
when $\cG$ is locally free, a coherent chart is the same as a local chart.

Morphisms between coherent charts are defined in the same way as morphisms between local charts. The main advantage of coherent charts is that one has a natural fiber product. 

\begin{defi} \emph{(Fiber product of coherent charts)}
Let $\pP = (U, \rho, \cG, r_\cG)$ and $\pP'=(U', \rho', \cG', r_{\cG'})$ be two coherent charts for $\fF$. Let $V = U \times_X U'$ and $\rho_V \colon V \to X$ the natural map to $X$. 
Using \eqref{1}, we have the fiber product 
$$\cG \times_{\fF} \cG':=\cG|_V\times_{\cF|_V}\cG'|_V.$$ 
Denote the induced surjection $\cG \times_{\fF} \cG' \to \fF|_V$ by $r_\cG \times_\fF r_{\cG'}$. The coherent chart $(V, \rho_V, \cG \times_{\fF} G', r_\cG \times_\fF r_{\cG'})$ is called the \emph{fiber product} of the coherent charts $\pP$ and $\pP'$ and denoted by $\pP \times_\fF \pP'$. There are natural morphisms of coherent charts $\pP \times_\fF \pP' \to \pP$ and $\pP \times_\fF \pP' \to \pP'$.
\end{defi}

\begin{rmk}
If $\pP$ and $\pP'$ are local charts, then it is easy to see that a common roof of $\pP$ and $\pP'$ by Definition~\ref{common roof defi} is a local chart for the fiber product $\pP \times_{\cF} \pP'$.
\end{rmk}

\begin{defi} \emph{(Atlas on $\fF$)}
An \emph{atlas} on a sheaf stack $\fF$ is a collection $$\lbrace \pP\lalp = (U\lalp, \rho\lalp, \cG\lalp, r\lalp) \rbrace_{\alpha \in \Gamma}$$ of coherent charts on $\fF$ such that the morphisms $\lbrace \rho\lalp \colon U\lalp \to X \rbrace$ give an \'{e}tale cover of $X$. We write $U\lab = U\lalp \times_X U\lbet$ and $\pP\lab = (U\lab, \rho\lab, \cG\lab, r\lab)$ for the fiber product $\pP\lalp \times_\fF \pP\lbet$. We use similar notation $U\labc$ and $\pP\labc$ for triples of indices.
\end{defi}

If $\aA$ is a coherent sheaf on $\cF$ and $\{\pP_\alpha\}$ is an atlas on $\cF$, then we have 
the pullbacks $$\aA_\alpha\in \mathrm{Coh}(\cG_\alpha)$$ of $\aA$ to $\cG_\alpha$ by the surjections $r_\alpha:\cG_\alpha\to \cF|_{U_\alpha}$. Moreover, the pullbacks $\aA_\alpha$ and $\aA_\beta$ to $\cG\lab$ are isomorphic to $\aA|_{\cG\lab}$. By \eqref{6}, we have the isomorphisms 
\beq\label{60} g\lab:\aA\lalp|_{\cG\lab}\mapright{=} \aA|_{\cG\lab}\mapleft{=} \aA\lbet|_{\cG\lab},\eeq
which satisfy the equality 
\beq\label{5} g_{\gamma \alpha}|_{\cG\labc} \circ g_{\beta\gamma}|_{\cG\labc} \circ g\lab |_{\cG\labc} = \mathrm{id}.\eeq

\begin{defi} \emph{(Descent datum)}
A \emph{descent datum} on a sheaf stack $\fF$ consists of 
\begin{enumerate}
    \item[(a)] an atlas $\lbrace \pP\lalp = (U\lalp, \rho\lalp, \cG\lalp, r\lalp) \rbrace_{\alpha \in A}$ on $\fF$;
    \item[(b)] a coherent sheaf $\aA\lalp$ on the sheaf stack $\cG\lalp$ for every index $\alpha$;
    \item[(c)] an isomorphism
    $$g\lab \colon \aA\lalp |_{\cG\lab} \lr \aA\lbet |_{\cG\lab}$$
    of coherent sheaves on the sheaf stack $\cG\lab$ for every pair $\alpha, \beta$, 
\end{enumerate}
such that the cocycle condition \eqref{5} holds for every triple $\alpha, \beta, \gamma$ of indices.

We say that a descent datum is \emph{effective} if there exists a coherent sheaf $\aA$ on $\fF$ such that we have $\aA\lalp = \aA |_{\cG\lalp}$ for all $\alpha$ and the $g\lab$ are the naturally induced isomorphisms \eqref{60} on fiber products. We then say that the datum \emph{descends} or \emph{glues} to the sheaf $\aA$.
\end{defi}

For example, let $f:\cG\to \cF$ be a surjective homomorphism of coherent sheaves on a \DM stack $X$. The single chart $(\id:X\to X, f:\cG\to \cF)$ is then an atlas for $\cF$. The fiber product
$\cG\times_\cF\cG$ fits into the commutative diagram
\beq\label{7} \xymatrix{
\cG\times_\cF\cG\ar[r]^-p \ar[d]_q & \cG\ar[d]^f\\
\cG\ar[r]_f & \cF}.\eeq
A descent datum for this atlas consists of a coherent sheaf $\bB$ on $\cG$ and an isomorphism 
$g:p^*\bB\to q^*\bB$. For a local chart $(\rho:U\to X, r_E:E\to \rho^*\cF)$ with affine base $U$, we pick a surjection 
$W\to E\times_{\cF|_U}\cG|_U$ from a locally free sheaf $W$ on $U$ so that we have surjections
$$r_W:W\lra E\times_{\cF|_U}\cG|_U\lra \cG|_U, \quad W\times_EW\lra \cG\times_\cF\cG|_U$$
and a fiber diagram
\beq\label{8}\xymatrix{
W\times_EW\ar[r]^-{p_W} \ar[d]_{q_W} & W\ar[d]\\
W\ar[r] & E
}\eeq
lying over \eqref{7}. 
As $\bB\in \mathrm{Coh}(\cG)$, we have a coherent sheaf $\bB_W$ on $W$ and an isomorphism
$p_W^*\bB_W\cong q_W^*\bB_W$. Since $W\to E$ is a surjective homomorphism of vector bundles, the coherent sheaf $\bB_W$ descends to a coherent sheaf $\bB_E$ on $E$. It is easy to see that $\bB_E$ is independent of the choice of $W$ and we have a coherent sheaf $\bB_E$ for any local chart of $\cF$ by considering an affine cover of the base scheme $U$. In this way, we obtain a coherent sheaf on $\cF$. 

By the same argument, we deduce the following. 
\begin{prop} \label{descent on F} \emph{(Descent for sheaves on $\fF$)} Any descent datum on a sheaf stack $\fF$ is effective and descends to a coherent sheaf on $\fF$, which is uniquely determined (up to canonical isomorphism).
\end{prop}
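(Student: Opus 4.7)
The plan is a two-stage reduction to the single-surjection case dispatched in the paragraph preceding the statement.

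For the first stage I would \emph{consolidate the atlas}. Set $U = \bigsqcup_\alpha U\lalp$ with the induced \'etale morphism $\rho \colon U \to X$ and $\cG = \bigsqcup_\alpha \cG\lalp$ the induced coherent sheaf on $U$ surjecting onto $\rho^* \cF$. This collapses the atlas $\{\pP\lalp\}$ to the single coherent chart $\pP = (U, \rho, \cG, r_\cG)$ of $\cF$. The $n$-fold fiber product of $\pP$ with itself (in the sense of fiber products of coherent charts) has base $\bigsqcup U\labc$ in the case $n=3$, and analogously for every $n$, with associated sheaf $\bigsqcup \cG\labc$. Consequently the $\aA\lalp$ assemble into a single coherent sheaf $\bB = \bigsqcup_\alpha \aA\lalp$ on $\cG$, the isomorphisms $g\lab$ assemble into a single isomorphism $g \colon p^* \bB \to q^* \bB$ on the double fiber product (with $p, q$ the two projections), and the cocycle condition \eqref{5} becomes the usual cocycle identity for $g$ on the triple fiber product. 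The original descent datum is thereby equivalent to a single-chart descent datum $(\bB, g)$ for the surjection $r_\cG \colon \cG \to \rho^* \cF$ on $U$.

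For the second stage I would \emph{run the single-surjection descent over $U$ and then \'etale-descend the base from $U$ to $X$}. Fix a local chart $Q = (V, \sigma, E, r_E)$ of $\cF$. Base-change to $V_U = V \times_X U$ and repeat the argument of diagrams \eqref{7}--\eqref{8} verbatim: pick a locally free surjection $W \to E|_{V_U} \times_{\cF|_{V_U}} \cG|_{V_U}$, form $\bB_W$ on $W$, use the pullback of $g$ to obtain a gluing on $W \times_{E|_{V_U}} W$, and smooth-descend on the vector bundle $E|_{V_U}$ to obtain a coherent sheaf on $E|_{V_U}$. Next, descend this sheaf along the \'etale map $E|_{V_U} \to E|_V$ of schemes. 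The required gluing on $E|_{V_U \times_V V_U}$ is extracted from $g$ pulled back via the natural map $V_U \times_V V_U \to U \times_X U$, and its cocycle identity follows from the cocycle for $g$. Ordinary \'etale descent for coherent sheaves on the scheme $E$ then produces $\aA_Q$ on $E$, naturally in $Q$ by the functoriality statements \eqref{9} and \eqref{6}, whence the desired coherent sheaf $\aA$ on $\cF$. Uniqueness at each stage yields the overall uniqueness up to canonical isomorphism.

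The hard part will be the bookkeeping in the second stage: one must verify that the gluing datum on $E|_{V_U \times_V V_U}$ needed for the final \'etale descent really does come, after the two chases through the auxiliary $W$-resolutions, from the original isomorphism $g$, and that the corresponding cocycle on $V_U \times_V V_U \times_V V_U$ reduces to the cocycle for $g$. This is a diagram chase with projection maps, relying on uniqueness in smooth descent, but it is the only non-formal step in the argument.
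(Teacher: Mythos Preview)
Your proposal is correct and is essentially the approach the paper has in mind: the paper's own proof is just the sentence ``repeat the single-surjection argument over an atlas, keep track of the indices, and use smooth descent on schemes,'' with details left to the reader. Your consolidation of the atlas into a single coherent chart via disjoint unions is exactly the standard device for turning ``keep track of the indices'' into the single-surjection case already handled before the proposition, and your second stage (local-chart-wise construction followed by \'etale descent along $E|_{V_U} \to E$) is precisely the smooth/\'etale descent on schemes that the paper invokes.
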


\begin{proof}
This is a standard descent argument using smooth descent for sheaves on schemes repeating the reasoning we saw above on an atlas for $\fF$ and keeping track of all the indices involved. We leave the details to the reader. 
\end{proof}

\medskip \subsection{Pullbacks of coherent sheaves on sheaf stacks} 
In this subsection, we will see more ways to pull back coherent sheaves or $K$-theory classes, other than \eqref{9} and \eqref{10}. 

\medskip

Let $\cF$ be a coherent sheaf on a \DM stack $X$. Then all local charts of the sheaf stack $\cF$ form an atlas in the obvious way. Suppose we have a short exact sequence
$$0\lra \cG\mapright{f} \cF\lra L\lra 0$$
of coherent sheaves on $X$ with $L$ locally free. Then any local chart $Q=(U,\rho,E, r_E)$ of $\cF$ gives rise to the commutative diagram of exact sequences
\beq\label{16}\xymatrix{
0\ar[r] & E'\ar[r]^{\tilde{f}} \ar[d]_{r_{E'}}& E\ar[r]\ar[d]^{r_E} & L|_U\ar[r]\ar@{=}[d] & 0\\
0\ar[r] & \cG|_U\ar[r]_f & \cF|_U\ar[r] & L|_U\ar[r] & 0
}\eeq
whose vertical arrows are all surjective. Note that $E'$ is locally free and 
it is straightforward to see that the local charts 
$Q'=(U,\rho, E',r_{E'})$ form an atlas on the sheaf stack $\cG$. 

Given a coherent sheaf $\aA_Q$ on $E$, we can pull  it back to $E'$ to get a coherent sheaf $\aA'_{Q'}=\tilde{f}^*\aA_Q$.
If $\aA\in \mathrm{Coh}(\cF)$, we then obtain a descent datum $\{\aA'_{Q'}\in \mathrm{Coh}(E')\}$ with the atlas $\{Q'\}$ on $\cG$. By Proposition \ref{descent on F}, we thus obtain a sheaf $\aA'$ on $\cG$, which we denote by $f^*\aA\in \mathrm{Coh}(\cG)$. Likewise, we can pull back homomorphisms in $\Coh(\cF)$ to $\Coh(\cG)$. We thus obtain the pullback functor 
\beq\label{11} f^*:\Coh(\cF)\lra \Coh(\cG), \eeq
for an injective homomorphism $f:\cG\to \cF$ of coherent sheaves on $X$ whose cokernel $L=\cF/\cG$ is locally free.

Unfortunately, this functor $f^*$ is not exact and hence does not induce a homomorphism on $K$-groups. 
To get an exact functor, we should use the left derived pullback: Letting $\pi_E:E\to U$ denote the bundle projection, we have a tautological section $s$ of $\pi^*_EL$ whose vanishing locus is $E'$. 

We then have the Koszul complex $\cK(L)$ and the homology sheaves of the complex $\cK(L) \otimes \aA_Q$ for each local chart $Q$ of $\cF$ give us descent data with respect to the atlas $\{ Q' \}$ on $\cG$. By Proposition~\ref{descent on F}, we get coherent sheaves 
$$L_0 f^* \aA = f^* \aA, \quad L_1f^* \aA,\quad L_2 f^* \aA,\quad \cdots$$ on $\cG$
(with degrees up to the rank of $L$), where each $L_i f^* \aA$ is the descent of the homology sheaves in degree $-i$ of the complexes $\cK(L) \otimes \aA_Q$ for local charts $Q$.
We thus obtain the pullback homomorphism 
\beq\label{12} f^*:K_0(\cF)\lra K_0(\cG),\quad [\aA]\mapsto \sum_{i \geq 0} (-1)^i [L_if^*\aA].\eeq
It is straightforward to check that \eqref{12} is well defined. 

The pullback \eqref{12} is compatible with the Gysin map $0^!_\cF$ as follows. 
\begin{lem}\label{14} 
Let $f:\cG\to \cF$ be an injective homomorphism of coherent sheaves on $X$ whose cokernel $L=\cF/\cG$ is locally free. Then we have the identity
\[  0^!_\cG\circ f^*=0^!_\cF.\]
\end{lem}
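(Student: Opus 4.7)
The plan is to reduce the identity to a single local chart and invoke associativity of the derived tensor product to get the equality in $K_0(U)$, then glue over charts using the descent already built into both sides.

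First I would reduce to a chart-level statement. Pick an affine local chart $Q=(U,\rho,E,r_E)$ of $\cF$ and let $Q'=(U,\rho,E',r_{E'})$ be the corresponding local chart of $\cG$ coming from the diagram \eqref{16}. The Koszul homologies $\hH^j_\cK(-)$, the sheaves $L_if^*\aA$, and hence the maps $0^!_\cF,\ 0^!_\cG,\ f^*$ are all assembled from chart-level data by Proposition~\ref{descent on F}, so it suffices to prove
\begin{align*}
\sum_{i,j\ge 0}(-1)^{i+j}\,\bigl[\hH^j_\cK(L_if^*\aA)_{Q'}\bigr]\;=\;\sum_{j\ge 0}(-1)^j\,\bigl[\hH^j_\cK(\aA)_Q\bigr]\in K_0(U),
\end{align*}
compatibly as $Q$ varies.

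The next step is to reinterpret each Koszul homology as a Tor. The complex $\cK(E)$ on $E$ is a locally free resolution of the structure sheaf of the zero section $U\hookrightarrow E$, so $\hH^j_\cK(\aA)_Q\cong \Tor_j^{\oO_E}(\oO_U,\aA_Q)$. The complex $\cK(L)$ on $E$ resolves $\oO_{E'}$ as an $\oO_E$-module (since $E'\hookrightarrow E$ is cut out by the tautological section of $\pi_E^*L$), so $(L_if^*\aA)_{Q'}\cong \Tor_i^{\oO_E}(\oO_{E'},\aA_Q)$. Likewise $\cK(E')$ on $E'$ resolves $\oO_U$ as $\oO_{E'}$-module, giving $\hH^j_\cK(L_if^*\aA)_{Q'}\cong \Tor_j^{\oO_{E'}}\!\bigl(\oO_U,\Tor_i^{\oO_E}(\oO_{E'},\aA_Q)\bigr)$. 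Associativity of the derived tensor product
\begin{align*}
\oO_U\otimes^{\dL}_{\oO_E}\aA_Q\;\simeq\;\oO_U\otimes^{\dL}_{\oO_{E'}}\!\bigl(\oO_{E'}\otimes^{\dL}_{\oO_E}\aA_Q\bigr)
\end{align*}
then yields a convergent Grothendieck-type spectral sequence
\begin{align*}
E_2^{i,j}=\Tor_j^{\oO_{E'}}\!\bigl(\oO_U,\Tor_i^{\oO_E}(\oO_{E'},\aA_Q)\bigr)\;\Longrightarrow\;\Tor_{i+j}^{\oO_E}(\oO_U,\aA_Q),
\end{align*}
bounded because all involved Koszul complexes are. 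Taking the alternating sum of classes on the $E_2$-page and of the abutment in $K_0(U)$ yields the desired chart-level identity.

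The main obstacle is the compatibility step: one must check that this chart-level identity is compatible with the descent used to assemble $0^!_\cF,\ 0^!_\cG$, and $f^*$. This reduces to the fact that the Koszul complexes $\cK(E),\cK(L),\cK(E')$, the interpretation of their homologies as Tors, and the associativity quasi-isomorphism above are all natural under the smooth pullback induced by a morphism $\gamma\colon Q_1\to Q_2$ of local charts of $\cF$. Once this naturality is verified, the chart-level identity descends to $K_0(X)$, giving the claim.
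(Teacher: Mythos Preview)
Your proof is correct and follows essentially the same strategy as the paper: reduce to a single local chart, use a spectral sequence comparing the two ways of computing the Koszul/Tor homology coming from the factorization $U\hookrightarrow E'\hookrightarrow E$, and observe naturality under morphisms of charts. The only cosmetic difference is that the paper works on an affine chart and uses the splitting $E\cong E'\oplus L$ to write $\cK(E)\cong\cK(E')\otimes\cK(L)$ as an explicit double complex, whereas you invoke the associativity $\oO_U\otimes^{\dL}_{\oO_E}\aA_Q\simeq\oO_U\otimes^{\dL}_{\oO_{E'}}(\oO_{E'}\otimes^{\dL}_{\oO_E}\aA_Q)$ and the resulting change-of-rings spectral sequence; these produce the same $E_2$-page and the same $K$-theory identity.
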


\begin{proof}
For any affine local chart $Q=(U, \rho, E, r_E)$, we can split $E$ as the direct sum $E'\oplus L$ and we have an induced isomorphism $\cK(E)\cong \cK(E')\otimes \cK(L)$ of Koszul complexes. Given a coherent sheaf $\aA$ on $\fF$, we then obtain an isomorphism
$$\cK(E) \otimes \aA_Q \cong \cK(E') \otimes (\cK(L) \otimes \aA_Q)$$
The spectral sequence for the double complex on the right hand side gives an equality in $K$-theory for any $\ell \geq 0$
$$[\hH_Q^{\ell}(\aA)] = \sum_{i+j = \ell} (-1)^j [\hH_{Q'}^i (L_j f^\ast \aA)] \in K_0(U).$$
Since the spectral sequence maps are functorial with respect to morphisms between local charts and independent of the choice of splitting of $E$, the above equality is true globally for Koszul homology sheaves using descent and the lemma follows.
\end{proof}

\medskip

We can also pull back coherent sheaves on a sheaf stack by a morphism of the underlying \DM stack. 
Let $\tau:Y\to X$ be a morphism of \DM stacks and $\cF$ be a coherent sheaf on $X$. As we saw above, local charts $Q=(U,\rho, E, r_E)$ of the sheaf stack $\cF$ form an atlas on $\cF$. 
The pullbacks 
\beq\label{20} Q'=(\rho':U'=Y\times_XU\to Y, r_{E'}=\tau^*r_E:E'=\tau^*E\to \tau^*\cF)\eeq 
form an atlas of the sheaf stack $\tau^*\cF$ on $Y$. By the induced morphism $\tilde{\tau}:E'=\tau^*E\to E$, we can pull back coherent sheaves on $E$ to $E'$. 
If $\aA\in \Coh(\cF)$ is a coherent sheaf on $\cF$, the coherent sheaves $\{\tilde{\tau}^*\aA_Q\in \Coh(E')\}$ form a descent datum and hence glue to a coherent sheaf $\tau^*\aA$ on $\tau^*\cF$. The same holds for homomorphisms in $\Coh(\cF)$. We thus obtain the pullback functor
\beq\label{15} \tau^*:\Coh(\cF)\lra \Coh(\tau^*\cF)\eeq
for a morphism $\tau:Y\to X$ of \DM stacks and a sheaf stack $\cF$ over $X$. 

If $\tau$ is flat, then $\tilde{\tau}:E'\to E$ is flat and $\tilde{\tau}^*$ is exact. Hence we obtain a homomorphism
\beq\label{13}\tau^*:K_0(\cF)\to K_0(\tau^*\cF)\eeq
when $\tau:Y\to X$ is flat. 

\medskip

The pullback $\tau^*$ by a flat morphism is compatible with the Gysin maps. 
\begin{lem}\label{4}
Let $\tau:Y\to X$ be a flat morphism of \DM stacks and $\cF$ be a coherent sheaf on $X$. 
Then the pullback \eqref{13} fits into the commutative diagram
\beq\label{3}
\xymatrix{ K_0(\cF)\ar[r]^{\tau^*} \ar[d]_{0^!_\cF} & K_0(\tau^*\cF)\ar[d]^{0^!_{\tau^*\cF}}\\
K_0(X)\ar[r]_{\tau^*} & K_0(Y).}
\eeq
\end{lem}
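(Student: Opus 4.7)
The plan is to reduce the identity to a statement at the level of local charts, where it becomes flat base change for Koszul homology. Fix $\aA \in \Coh(\cF)$ and a local chart $Q = (U, \rho, E, r_E)$ on $\cF$, and let $Q' = (U', \rho', E', r_{E'})$ be the pullback chart on $\tau^*\cF$ from \eqref{20}, with $U' = Y \times_X U$ and $E' = \tau^* E$. Writing $\pi \colon U' \to U$ and $\tilde{\tau} \colon E' \to E$ for the induced morphisms, we obtain the Cartesian square
$$\xymatrix{E' \ar[r]^{\tilde{\tau}} \ar[d]_{\pi_{E'}} & E \ar[d]^{\pi_E} \\ U' \ar[r]_{\pi} & U.}$$
Since $\tau$ is flat, so are $\pi$ and $\tilde{\tau}$.

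By construction of the pullback functor \eqref{15} we have $(\tau^*\aA)_{Q'} = \tilde{\tau}^*\aA_Q$, and since $\pi_{E'}^* (E')^\vee = \tilde{\tau}^* \pi_E^* E^\vee$ the Koszul complexes satisfy $\cK(E') = \tilde{\tau}^* \cK(E)$. Hence
$$\cK(E') \otimes_{\sO_{E'}} (\tau^*\aA)_{Q'} = \tilde{\tau}^*\bigl(\cK(E) \otimes_{\sO_E} \aA_Q\bigr).$$
Flatness of $\tilde{\tau}$ commutes with taking homology, yielding $\hH_{Q'}^i(\tau^*\aA) = \tilde{\tau}^* \hH_Q^i(\aA)$; since both sides are supported on the zero section, this is equivalently $\hH_{Q'}^i(\tau^*\aA) = \pi^* \hH_Q^i(\aA)$ in $\Coh(U')$.

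Next I would check that these local identifications are compatible with the descent on common roofs used to define the global Koszul homology sheaf in the Theorem-Definition above. Given a common roof $W$ for two local charts $Q_1, Q_2$ on $\cF$, its pullback to $Y$ provides a common roof for the corresponding pulled-back charts $Q_1', Q_2'$ on $\tau^*\cF$, and the canonical isomorphisms produced by the gluing argument in \cite{KiemSavvas} are natural with respect to flat base change. It follows that the local identifications $\hH_{Q'}^i(\tau^*\aA) \cong \pi^* \hH_Q^i(\aA)$ glue to a global isomorphism $\hH_\cK^i(\tau^*\aA) \cong \tau^* \hH_\cK^i(\aA)$ of coherent sheaves on $Y$.

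Finally, since $\tau$ is flat, the pullback $\tau^* \colon K_0(X) \to K_0(Y)$ is induced by an exact functor, so applying the definition \eqref{31} of the Gysin map gives
$$\tau^*\bigl(0_\cF^![\aA]\bigr) = \sum_{i \geq 0} (-1)^i [\tau^*\hH_\cK^i(\aA)] = \sum_{i \geq 0} (-1)^i [\hH_\cK^i(\tau^*\aA)] = 0_{\tau^*\cF}^!\bigl(\tau^*[\aA]\bigr),$$
proving \eqref{3}. The main point requiring care is the gluing step — once flat base change is shown to be compatible with the common-roof descent producing $\hH_\cK^i$, the rest reduces to a term-by-term application of flat base change for Koszul homology on each chart.
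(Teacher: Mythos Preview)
Your proof is correct and follows the same approach as the paper, which gives only the one-line justification ``the lemma follows from the isomorphism $\tau^*\cK(E)\cong \cK(\tau^*E)$ of Koszul complexes.'' You have simply unpacked this: the isomorphism of Koszul complexes combined with flatness of $\tilde{\tau}$ gives the local identification of Koszul homology sheaves, and your added care about compatibility with the common-roof descent makes explicit what the paper leaves implicit.
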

\begin{proof} The lemma follows from the isomorphism $\tau^*\cK(E)\cong \cK(\tau^*E)$ of Koszul complexes.
\end{proof}

\medskip \subsection{Pushforwards of coherent sheaves on sheaf stacks}
In this subsection, we will consider pushforwards on sheaf stacks. 

Suppose we have a short exact sequence
$$0\lra \cG\mapright{f} \cF\lra L\lra 0$$
of coherent sheaves on a \DM stack $X$ with $L$ locally free. For any local chart
$Q=(U,\rho,E,r_E)$, we have a commutative diagram \eqref{16} of exact sequences
and $Q'=(U,\rho,E', r_{E'})$ is a local chart for $\cG$.  
Given a coherent sheaf $\bB\in \Coh(\cG)$, we have a coherent sheaf $\bB_{Q'}$ on $E'$ and its pushforward  
$\tilde{f}_*\bB_{Q'}\in \Coh(E)$ by the closed immersion $\tilde{f}$.
By the base change property, the assignment $Q\mapsto \tilde{f}_*\bB_{Q'}$ is a coherent sheaf on $\cF$, which we denote by $f_*\bB\in \Coh(\cF)$. Likewise, we can pushforward a homomorphism in $\Coh(\cG)$ to $\Coh(\cF)$ by $f$. We thus obtain a functor
\beq\label{17} f_*:\Coh(\cG)\lra \Coh(\cF),\eeq
which is exact as $\tilde{f}$ is a closed immersion. So we have a homomorphism
\beq\label{18} f_*: K_0(\cG)\lra K_0(\cF)\eeq
for an injective homomorphism $f:\cG\to \cF$ of coherent sheaves on $X$ with locally free cokernel $L=\cF/\cG$. 

By construction, the natural adjunction homomorphisms 
$$\aA_Q\lra \tilde{f}_*\tilde{f}^*\aA_Q, \quad \tilde{f}^*\tilde{f}_*\bB_{Q'}\lra \bB_{Q'}$$
induce the natural homomorphisms
\beq\label{19} \aA\lra f_*f^*\aA,\quad f^*f_*\bB\lra \bB\eeq
in $\Coh(\cF)$ and $\Coh(\cG)$ respectively. 

\medskip

Let $\tau:Y\to X$ be a proper morphism of \DM stacks. Let $\cF$ be a coherent sheaf on $X$.
A local chart $Q=(U,\rho,E,r_E)$ of $\cF$ induces a local chart $Q'$, defined  by \eqref{20}, of the sheaf stack  $\tau^*\cF$
and a proper morphism $$\tilde{\tau}:E'=\tau^*E\to E.$$ 

Let  $\bB\in \Coh(\tau^*\cF)$ be a coherent sheaf on $\tau^*\cF$. 
Then we have the coherent sheaves
$$\tilde{\tau}_*\bB_{Q'}, \quad R^i\tilde{\tau}_*\bB_{Q'}\in \Coh(E)$$
by taking the direct and higher direct images. 
By the base chage property \cite[III, Proposition 9.3]{Hartshorne}, the assignments 
$$Q\mapsto \tilde{\tau}_*\bB_{Q'}, \quad Q\mapsto R^i\tilde{\tau}_*\bB_{Q'}$$
are coherent sheaves on $\cF$, which we denote by $\tau_*\bB=R^0\tau_*\bB$ and $R^i\tau_*\bB$ respectively. 
We thus obtain functors
\beq\label{21} \tau_*=R^0\tau_*, R^i\tau_*:\Coh(\tau^*\cF)\lra \Coh(\cF)\eeq
for $i>0$, which give us the homomorphism
\beq\label{22} \tau_*:K_0(\tau^*\cF)\lra K_0(\cF), \quad [\bB]\mapsto \sum_i(-1)^i[R^i\tau_*\bB]\eeq
for a proper morphism $\tau:Y\to X$. 

\begin{lem}\label{35}
For a proper morphism, the diagram 
\[\xymatrix{
K_0(\tau^*\cF)\ar[d]_{0^!_{\tau^*\cF}}\ar[r]^{\tau_*} & K_0(\cF)\ar[d]^{0^!_\cF}\\
K_0(Y)\ar[r]^{\tau_*} & K_0(X)
}\]
commutes.
\end{lem}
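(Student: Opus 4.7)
The plan is to reduce the claimed identity to a local statement on a chart of $\cF$, invoke the projection formula for the induced proper map of schemes, and then globalize via the descent theory of Section~\ref{functoriality section}. Fix a local chart $Q=(U,\rho,E,r_E)$ of $\cF$ and let $Q'=(U',\rho',E',r_{E'})$ be the pullback chart for $\tau^*\cF$ defined in \eqref{20}, with $U'=Y\times_X U$ and $E'=\tau^*E$. The induced bundle map $\tilde\tau\colon E'\to E$ is proper, lies over the restriction $\sigma\colon U'\to U$ of $\tau$, and satisfies the key compatibility $\tilde\tau^*\cK(E)\cong\cK(E')$ of Koszul complexes, already exploited in the proof of Lemma~\ref{4}.

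Since $\cK(E)$ is a bounded complex of locally free sheaves, the projection formula yields a canonical isomorphism
\[
R\tilde\tau_*\bigl(\cK(E')\otimes_{\oO_{E'}}\bB_{Q'}\bigr)\;\cong\;\cK(E)\otimes_{\oO_E}^{\dL} R\tilde\tau_*\bB_{Q'}
\]
in $D(E)$, for any $\bB\in\Coh(\tau^*\cF)$. Both sides are bounded complexes with coherent cohomology supported on the zero section of $\pi_E$, so one may take Euler characteristics in $K_0(U)$. The hypercohomology spectral sequence $E_2^{p,q}=R^p\sigma_*\hH^{-q}_{Q'}(\bB)$ computes the left-hand Euler characteristic as $\sum_{i,j}(-1)^{i+j}[R^j\sigma_*\hH^i_{Q'}(\bB)]$, while on the right-hand side, flatness of each term of $\cK(E)$ turns the derived tensor into an underived one along $\cK(E)$; the corresponding spectral sequence, together with the definition $(R^k\tau_*\bB)_Q=R^k\tilde\tau_*\bB_{Q'}$, gives $\sum_{k,\ell}(-1)^{k+\ell}[\hH^\ell_Q(R^k\tau_*\bB)]$. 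These two expressions are therefore equal in $K_0(U)$.

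Finally, I would promote this local equality to the desired identity in $K_0(X)$. The Koszul homology sheaves $\hH^i_{Q'}(\bB)$ glue to a global coherent sheaf $\hH^i_{\cK,Y}(\bB)\in\Coh(Y)$ and, since $\rho\colon U\to X$ is \'etale, flat base change provides natural isomorphisms $R^j\sigma_*\hH^i_{Q'}(\bB)\cong \rho^*R^j\tau_*\hH^i_{\cK,Y}(\bB)$ and $\hH^\ell_Q(R^k\tau_*\bB)\cong \rho^*\hH^\ell_\cK(R^k\tau_*\bB)$. Plugging these into Definition~\ref{32} and the pushforward formula~\eqref{22} immediately gives $\tau_*\circ 0^!_{\tau^*\cF}[\bB]=0^!_\cF\circ \tau_*[\bB]$, proving the lemma. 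The main obstacle is the naturality bookkeeping: one must check that the projection formula isomorphism is compatible with morphisms between charts so that the local identities really patch together. This is essentially automatic from functoriality of $R\tilde\tau_*$, but needs to be traced through the descent data machinery introduced in Section~\ref{functoriality section}.
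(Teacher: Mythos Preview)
Your proof is correct and follows the same route as the paper, which simply invokes Definition~\ref{32} together with the projection formula; you have written out the spectral sequence and descent bookkeeping that the paper's one-line proof leaves implicit.
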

\begin{proof}
The lemma follows from Definition \ref{32} and the projection formula \cite[III, Exer. 8.3]{Hartshorne}.  
\end{proof}

As above, the usual adjunction homomorphisms give us homomorphisms
\beq\label{23} \aA\lra \tau_*\tau^*\aA, \quad \tau^*\tau_*\bB\to \bB\eeq
in $\Coh(\cF)$ and $\Coh(\tau^*\cF)$ respectively.

\section{Cosection Localization}\label{Scos}

Let $X \to S$ be a morphism, where $X$ is a Deligne-Mumford stack of finite type and $S$ a smooth quasi-projective curve, together with a perfect obstruction theory $\phi$ and a cosection $\sigma \colon \Ob_\phi \to \oO_X$. 
In \cite{KiemLiCosection} and \cite{KiemLiKTheory}, it is shown that the virtual fundamental cycle $[X]\virt \in A_* (X)$ and virtual structure sheaf $[\oO_X\virt] \in K_0(X)$ localize to the vanishing locus $X(\sigma)$ of $\sigma$, being the pushforward of localized classes in $A_*(X(\sigma))$ and $K_0(X(\sigma))$ respectively in a canonical way. 

In this section, we prove the analogous cosection localization for virtual structure sheaves induced by an \emph{almost} perfect obstruction theory.

\medskip \subsection{Cosection localization for perfect obstruction theory}\label{S4.1}
Let $X\to S$ be as above. Let $\rho:U \to X$ be an \'etale morphism from a scheme $U$ and let $\phi \colon E \to \bL_{U / S}^{\geq -1}$ be a perfect obstruction theory with $E = [E^{-1} \to E^0]$ a global resolution by vector bundles. Moreover, suppose that we have a morphism
\begin{align*}
    \sigma \colon \Ob_U \lr \oO_U,
\end{align*}
which, following \cite{KiemLiCosection}, we refer to as a \emph{cosection}.

Let $\underline{\sigma}$ be the composition $E_1 = (E^{-1})^\vee \to \Ob_U \to \oO_U$, $U(\sigma)$ the vanishing locus of $\sigma$, $U^\circ = U - U(\sigma)$ and 
\begin{align} \label{E(sigma)}
    E_1({\sigma}) = E_1|_{U(\sigma)} \cup \ker \left( \underline{\sigma}|_{U^\circ} \colon E_1 |_{U^\circ} \lr \oO_{U^\circ} \right).
\end{align}

In \cite{KiemLiKTheory}, the authors define a localized Gysin map
$$0^!_{E_1, {\sigma}} \colon K_0(E_1(\sigma)) \to K_0(U(\sigma)).$$

We recall the construction. Let $\tau \colon \tilde{U} \to U$ be the blowup of $U$ along $U(\sigma)$ with exceptional divisor $D$. If $\tilde{E}_1 = \tau^* E_1$, we have an induced surjection
\begin{align*}
   \tilde{\underline{\sigma}} \colon \tilde{E}_1 \lr \oO_{\tU}(-D).
\end{align*}
Let $E_1' = \ker \ti\sun$ and 
\begin{align} \label{E_1'}
    \ti{\tau} \colon E_1' \lra E_1(\sigma)
\end{align}
be the morphism induced from $\tE_1 \to E_1$. 

For any coherent sheaf $\aA$ on $E_1(\sigma)$, we have a natural morphism by adjunction 
\begin{align*}
    \eta_\aA \colon \aA \lr \ti{\tau}_* \ti{\tau}^* \aA.
\end{align*} 
Since $\eta_\aA$ is an isomorphism over $U^\circ$, the sheaves $\ker(\eta_\aA), \coker(\eta_\aA)$ and $R^i \ti{\tau}_* \ti{\tau}^* \aA$ for $i \geq 1$ are supported on $E_1|_{U(\sigma)}$.

We may therefore define
\begin{align} \label{expression for R_A}
    R_\aA := [\ker(\eta_\aA)] -[\coker(\eta_\aA)] - \sum_{i \geq 1}(-1)^i [R^i \ti{\tau}_* \ti{\tau}^* \aA] \in K_0(E_1|_{U(\sigma)}).
\end{align}

\begin{defi} \emph{\cite{KiemLiKTheory}} The cosection localized Gysin map is given by the formula
\begin{align} \label{localized Gysin map}
    0^!_{E_1, {\sigma}}[\aA] := (\tau|_D)_* \left( D^\vee \cdot 0_{E_1'}^! [\ti{\tau}^* \aA] \right) + 0^!_{E_1|_{U(\sigma)}}R_\aA \in K_0(U(\sigma)),
\end{align}
where $D^\vee \cdot [\aA'] = [\oO_{\ti{X}} \to \oO_{\ti{X}}(D)] \otimes [\aA']$.
\end{defi}

Let $C_1 := \fc_{X/S} \times_{\Ob_X} E_1 \sub E_1$ be the obstruction cone of the perfect obstruction theory $\phi$. In \cite{KiemLiCosection}, it is shown that $C_1$ has reduced support in $E_1(\sigma)$. Therefore, if we let $I$ denote the ideal sheaf of $(C_1)^\rred \sub C_1$, the sheaves $\aA_j = I^j \oO_{C_1} / I^{j+1} \oO_{C_1}$ for $j \geq 0$ are naturally coherent sheaves on $E_1(\sigma)$ and moreover
\begin{align*}
    [\oO_{C_1}] = \sum_{j \geq 0} [\aA_j] \in K_0(E_1(\sigma))
\end{align*} 
where the summation is finite, since $I^j = 0$ for large enough $j$.

\begin{defi-thm}\label{24} \emph{\cite{KiemLiKTheory}} The cosection localized virtual structure sheaf on $U$ is defined by
\begin{align*}
[\oO_{U,\loc}\virt] = 0^!_{E_1, {\sigma}}[\oO_{C_1}] := \sum_{j \geq 0} 0^!_{E_1, {\sigma}} [\aA_j] \in K_0(U(\sigma)).
\end{align*}
It satisfies 
\begin{align*}
\iota_* [\oO_{U,\loc}\virt] = [\oO_{U}\virt] \in K_0(U).
\end{align*}
where $\iota \colon U(\sigma) \to U$ is the inclusion, is independent of the particular choice of global resolution for $E$ and deformation invariant.
\end{defi-thm}
The purpose of this section is to generalize Definition-Theorem \ref{24} to the setting of almost perfect obstruction theories.

\medskip \subsection{Intrinsic normal cone under a cosection}
Recall from \S\ref{S4}, that under the assumptions of Definition \ref{APOT}, if $X\to S$ is equipped with an almost perfect obstruction theory $\phi$, we have the intrinsic normal sheaf 
$\fc_{X/S}$ which is a closed substack of the obstruction sheaf $Ob_X$ of $\phi$. Let $\cF=Ob_X$ in the rest of this section. 

For any local chart $Q=(U,\rho, E, r_E)$ of the sheaf stack $\cF$, we have a Cartesian square
\beq\label{25}\xymatrix{ C\ar[r] \ar[d] & E\ar[d]^{r_E}\\
\fc_{X/S}|_U\ar[r] & \cF|_U}\eeq
whose horizontal arrows are closed immersions. Hence $C$ is a closed subscheme of the vector bundle $E$ and its structure sheaf $\sO_C$ is a coherent sheaf on $E$. It is obvious that the assignment $Q\mapsto \sO_C$ is a coherent sheaf on the sheaf stack $\cF$, which we denote by  
$$\sO_{\fc_{X/S}}\in \Coh(\cF).$$

Now suppose we have a cosection 
$$\sigma:\cF=Ob_X\lra \sO_X$$
of the obstruction sheaf.
As mentioned above, it was proved in \cite{KiemLiCosection} that for any local chart 
$Q=(\rho:U\to X, r_E:E\to \cF|_U)$ of the sheaf stack $\cF$, the cone 
$C=E\times_{\cF|_U}\fc_{X/S}|_U\subset E$  in \eqref{25} has reduced support in 
\beq\label{26} E(\sigma)=E|_{U(\sigma)}\cup \ker(E\mapright{r_E} \cF|_U\mapright{\sigma} \sO_U)\eeq
where $U(\sigma)$ is the vanishing locus of $\sigma$ which is the closed subscheme of $U$ defined by the image of $\sigma|_U:\cF|_U=\rho^*\cF\to \sO_U$. 
The closed substacks $E(\sigma)$ for local charts $Q=(U,\rho,E,r_E)$ define a closed substack which we denote by $\cF(\sigma)$. We let 
$\coh(\cF)$ denote the set of isomorphism classes of coherent sheaves on the sheaf stack $\cF$ and
let 
\beq\label{29}\coh_\sigma(\cF)\eeq
denote the subset of isomorphisms classes of coherent sheaves $\aA$ on $\cF$ with support in $\cF(\sigma)$, i.e. for each local chart  
$Q=(U,\rho,E,r_E)$, $\aA_Q$ has support in $E(\sigma)$.

The assignment to a local chart $Q$ of the  ideal sheaf $I_Q$ of $E(\sigma)$ on $E$ 
is a coherent sheaf $\iI$ and there exist exact sequences
\begin{align*}
    0 \lr \iI^{j+1}\oO_{\fc_{X/S}} \lr \iI^{j}\oO_{\fc_{X/S}} \lr \iI^j\oO_{\fc_{X/S}} / \iI^{j+1}\oO_{\fc_{X/S}} \lr 0.
\end{align*}
We let 
\begin{align} \label{def of A_j}
    \aA_j = \iI^j\oO_{\fc_{X/S}} / \iI^{j+1}\oO_{\fc_{X/S}} \in \Coh(\cF)
\end{align}
so that for any local chart $Q=(U,\rho,E,r_E)$ of $\cF$, $(\aA_j)_Q\in \Coh(E)$ is a coherent sheaf supported in $E(\sigma)$. 
Note that the isomorphism class of $\aA_j$ lies in $\coh_\sigma(\cF)$ and  
\beq\label{27} [\sO_{\fc_{X/S}}]=\sum_j [\aA_j]\in K_0(\cF)\eeq
by the definition of the $\aA_j$. 

\medskip \subsection{Cosection localized Gysin maps}

In this subsection, we will define a map
\beq\label{28} 0^!_{\cF,\sigma}:\coh_\sigma(\cF)\lra K_0(X(\sigma))\eeq
where $X(\sigma)$ is the vanishing locus of the cosection $\sigma.$
The cosection localized virtual structure sheaf $[\sO_{X,\loc}\virt]\in K_0(X(\sigma))$ will be defined by 
\beq\label{30} [\sO_{X,\loc}\virt]=\sum_j 0^!_{\cF,\sigma}(\aA_j)\eeq
with $\aA_j$ from \eqref{def of A_j}. By construction, it will follow that the pushforward of $ [\sO_{X,\loc}\virt]$ by the inclusion $X(\sigma)\hookrightarrow X$ is the usual virtual structure sheaf $[\sO_X\virt]\in K_0(X).$

\medskip

Let $\tau:\tilde{X}\to X$ be the blowup of $X$ along $X(\sigma)$. Let $D$ denote the exceptional divisor.  
The cosection $\sigma:\cF\to \sO_X$ lifts to a surjection $\tau^*\cF=\cF|_{\tilde{X}}\to \sO_{\tilde{X}}(-D)$ whose kernel is denoted by $\cF'$, so that we have an exact sequence 
$$0\lra \cF'\mapright{f} \tau^*\cF\lra \sO_{\tilde{X}}(-D)\lra 0.$$

Let  $\aA\in \Coh(\cF)$ whose isomorphism class lies in $\coh_\sigma(\cF)$. 
By the pullback functors \eqref{11} and \eqref{15}, we have 
\beq\label{44} [f^*\tau^*\aA]\in K_0(\cF').\eeq 
By applying the Gysin map for $\cF'$ (Definition \ref{32}), we obtain
$$0^!_{\cF'}  [f^*\tau^*\aA]\in K_0(\tilde{X}).$$
Then we intersect it with $-D$ to obtain  
$$D^\vee\cdot 0^!_{\cF'}  [f^*\tau^*\aA]=[\sO_{\tilde{X}}\to \sO_{\tilde{X}}(D)]\otimes 0^!_{\cF'}  [f^*\tau^*\aA]\in K_0(D).$$
Now we push it down to $X(\sigma)$ by $\tau|_D$ to obtain
\beq\label{33} (\tau|_D)_* \left( D^\vee\cdot 0^!_{\cF'}  [f^*\tau^*\aA]\right) \in K_0(X(\sigma)).\eeq
Let $\imath:X(\sigma)\to X$ and $\tilde{\imath}:D\to \tilde{X}$ denote the inclusions. Then 
\beq\label{34} \imath_*(\tau|_D)_*  \left( D^\vee\cdot 0^!_{\cF'}  [f^*\tau^*\aA]\right)
=\tau_*\tilde{\imath}_*\left( D^\vee\cdot 0^!_{\cF'}  [f^*\tau^*\aA]\right)\eeq
\[=\tau_* 0^!_{\sO_{\tilde{X}}(-D)}0^!_{\cF'}  [f^*\tau^*\aA] 
=\tau_*0^!_{\tau^*\cF} [f_*f^*\tau^*\aA]\] \[=0^!_\cF\tau_* [f_*f^*\tau^*\aA]
=\sum_{i\ge 0}(-1)^i0^!_\cF [R^i\tau_*f_*f^*\tau^*\aA].\]

\medskip

Since $\aA$ is a sheaf with support in $\cF(\sigma)$ and $\tau$ is an isomorphism on $X-X(\sigma)$, we find that $R^i\tau_*f_*f^*\tau^*\aA|_{X-X(\sigma)}=0$
is a coherent sheaf on  the closed substack $\cF|_{X(\sigma)}$ for $i>0$. 
Likewise, the kernel and cokernel of the natural homomorphism
$$\eta_\aA: \aA\lra \tau_*f_*f^*\tau^*\aA$$
are coherent sheaves on $\cF|_{X(\sigma)}$.
Let 
\beq\label{37} R_{\aA}:=
 [\ker(\eta_\aA)] -[\coker(\eta_\aA)] - \sum_{i \geq 1}(-1)^i [R^i \tau_*f_*f^*\tau^*\aA] \in K_0(\cF|_{X(\sigma)}).\eeq
Then by Lemma \ref{35}, we have
 \beq\label{38}
 \imath_*0^!_{\cF|_{X(\sigma)}}R_\aA=0^!_\cF \imath_*R_\aA \eeq
\[ 
 =0^!_\cF\left([\aA]-[\tau_*f_*f^*\tau^*\aA]-\sum_{i \geq 1}(-1)^i [R^i \tau_*f_*f^*\tau^*\aA]\right)\] \[ =0^!_\cF[\aA]-\sum_{i\ge 0}(-1)^i0^!_\cF [R^i\tau_*f_*f^*\tau^*\aA]\in K_0(X).
\]

\begin{defi}\label{39}
The \emph{cosection localized Gysin map} 
\beq\label{41} 0^!_{\cF,\sigma}:\coh_\sigma(\cF)\lra K_0(X(\sigma))\eeq
for $\cF$ is defined by 
$$0^!_{\cF,\sigma}[\aA]=  (\tau|_D)_* \left( D^\vee\cdot 0^!_{\cF'}  [f^*\tau^*\aA]\right)
+0^!_{\cF|_{X(\sigma)}}R_\aA\in K_0(X(\sigma))$$
for any coherent sheaf $\aA$ on $\cF$ with support in $\cF(\sigma)$. 
\end{defi}

By adding \eqref{33} and \eqref{38}, we obtain the following comparison of the Gysin maps $0^!_\cF$ and $0^!_{\cF,\sigma}$. 
\begin{prop}\label{40} For $[\aA]\in \coh_\sigma(\cF)$, we have the equality 
$$\imath_* 0^!_{\cF,\sigma}(\aA)=0^!_\cF[\aA]\in K_0(X).$$
\end{prop}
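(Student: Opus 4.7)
The plan is almost trivial given the work already carried out just before the statement. By definition, $0^!_{\cF,\sigma}[\aA]$ is the sum of two contributions, namely the blowup contribution $(\tau|_D)_*(D^\vee\cdot 0^!_{\cF'}[f^*\tau^*\aA])$ and the residual contribution $0^!_{\cF|_{X(\sigma)}}R_\aA$. After applying $\imath_*$, each of these has already been massaged into an expression involving $0^!_\cF$ applied to pieces of the Grothendieck complex $\{R^i\tau_*f_*f^*\tau^*\aA\}_{i\ge 0}$, so the task reduces to verifying that these expressions add up to $0^!_\cF[\aA]$.

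Concretely, the first step is to rewrite $\imath_*(\tau|_D)_*(D^\vee\cdot 0^!_{\cF'}[f^*\tau^*\aA])$ as in \eqref{34}: one factors $\imath_*=\tau_*\tilde{\imath}_*$, reinterprets $\tilde{\imath}_*(D^\vee\cdot -)$ as the Gysin map $0^!_{\sO_{\tilde X}(-D)}$ composed with the pushforward $f_*$ (this uses the construction of the pullback \eqref{11}/\eqref{12} and Lemma \ref{14} applied to the short exact sequence $0\to\cF'\to\tau^*\cF\to\sO_{\tilde X}(-D)\to 0$, giving $0^!_{\sO_{\tilde X}(-D)}\circ 0^!_{\cF'}=0^!_{\tau^*\cF}$), and finally uses Lemma \ref{35} to push through the proper map $\tau$. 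The outcome is $\sum_{i\ge 0}(-1)^i 0^!_\cF[R^i\tau_*f_*f^*\tau^*\aA]$. Second, by Lemma \ref{35} again, the pushforward $\imath_*\,0^!_{\cF|_{X(\sigma)}}R_\aA$ equals $0^!_\cF\,\imath_*R_\aA$; expanding the definition \eqref{37} of $R_\aA$ using the exact sequence $0\to\ker\eta_\aA\to\aA\to\tau_*f_*f^*\tau^*\aA\to\coker\eta_\aA\to 0$ (all nontrivial after $\imath_*$ because $\aA$ is supported in $\cF(\sigma)$), one obtains $[\aA]-\sum_{i\ge 0}(-1)^i[R^i\tau_*f_*f^*\tau^*\aA]$ in $K_0(\cF)$, as recorded in \eqref{38}. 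Adding the two contributions cancels the alternating sums and yields $0^!_\cF[\aA]$.

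The main (and really only) obstacle is the compatibility identity $\tilde{\imath}_*(D^\vee\cdot -)=0^!_{\sO_{\tilde X}(-D)}\circ f_*$ used in the first step: one must check that the Gysin map of the line bundle stack $\sO_{\tilde X}(-D)$, when computed via the Koszul complex $[\sO_{\tilde X}\to\sO_{\tilde X}(D)]$, agrees with intersecting with $-D$ and pushing forward from $D$. This is a local verification on charts where the Koszul complex reduces to a two-term resolution, after which the full assertion follows by descent and the functoriality established in Section \ref{functoriality section}. Once this compatibility is in hand, the proof is simply the sum of \eqref{33} (equivalently \eqref{34}) and \eqref{38}.
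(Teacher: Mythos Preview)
Your proposal is correct and matches the paper's proof exactly: the proposition follows immediately by adding the already-established identities \eqref{34} and \eqref{38}, and your two paragraphs simply re-derive those computations in detail. One small notational slip: the compatibility you single out should read $\tilde{\imath}_*(D^\vee\cdot -)=0^!_{\sO_{\tilde X}(-D)}$ on $K_0(\tilde X)$ without the trailing $\circ f_*$ (the passage through $f_*$ occurs in the next step of \eqref{34}, going from $0^!_{\sO_{\tilde X}(-D)}0^!_{\cF'}$ to $0^!_{\tau^*\cF}$ applied to $f_*f^*\tau^*\aA$), but this does not affect the argument.
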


\medskip \subsection{Cosection localized virtual structure sheaf for almost perfect obstruction theory} Using \eqref{def of A_j}, \eqref{27} and \eqref{41}, we may now
generalize the {cosection localized virtual structure sheaf} in \cite{KiemLiKTheory} to \DM stacks equipped with almost perfect obstruction theories. 

\begin{defi}\label{42}
The \emph{cosection localized virtual structure sheaf} for an almost prefect obstruction theory $\phi$ is defined by
\begin{align*}
    [\oO_{X,\loc}\virt] := \sum_{j \geq 0} 0^!_{\Ob_X, \sigma}(\aA_j) \in K_0(X(\sigma))
\end{align*}
where $0^!_{\Ob_X, \sigma}$ is the cosection localized Gysin map in Definition \ref{41} and the sheaves $\aA_j$ are defined in \eqref{def of A_j}. 
\end{defi}

\begin{prop}\label{43}
The pushforward of $[\sO_{X,\loc}\virt]$ by the inclusion $\imath:X(\sigma)\to X$ is the ordinary virtual structure sheaf $[\sO_X\virt]\in K_0(X)$ in \cite{KiemSavvas}.\end{prop}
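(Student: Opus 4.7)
The plan is to derive the identity as an immediate formal consequence of Proposition~\ref{40} combined with the decomposition \eqref{27} of $[\oO_{\fc_{X/S}}]$ in $K_0(\cF)$, where $\cF = \Ob_X$. First, I would unwind Definition~\ref{42} to write
\begin{align*}
\imath_* [\oO_{X,\loc}\virt] = \sum_{j \geq 0} \imath_* \, 0^!_{\cF, \sigma}(\aA_j),
\end{align*}
noting that the sum is finite because the ideal sheaf $\iI$ of $\cF(\sigma)$ satisfies $\iI^j \oO_{\fc_{X/S}} = 0$ for $j$ large enough, owing to the fact (established chart-by-chart as in \eqref{26}) that $\fc_{X/S}$ has reduced support in $\cF(\sigma)$.

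Next, each class $[\aA_j]$ lies in $\coh_\sigma(\cF)$ by the remark following \eqref{def of A_j}, so Proposition~\ref{40} is directly applicable and yields $\imath_* 0^!_{\cF,\sigma}(\aA_j) = 0^!_\cF [\aA_j]$ for every $j$. Summing over $j$ and using $\bZ$-linearity of the Gysin map $0^!_\cF$ together with \eqref{27}, one obtains
\begin{align*}
\imath_* [\oO_{X,\loc}\virt] \;=\; \sum_{j \geq 0} 0^!_\cF [\aA_j] \;=\; 0^!_\cF [\oO_{\fc_{X/S}}] \;=\; [\oO_X\virt],
\end{align*}
where the last equality is Definition~\ref{virtual structure sheaf}.

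The argument is essentially formal once Proposition~\ref{40} is in hand; the substantive work has already been carried out in the construction of $0^!_{\cF,\sigma}$ in Definition~\ref{39} and in the verification, via Lemmas~\ref{14}, \ref{4} and \ref{35}, that pullback and pushforward commute with the Gysin map $0^!_\cF$ in the manner needed to telescope the expression \eqref{33}--\eqref{38}. I therefore do not expect a genuine obstacle; the only point requiring care is to confirm that each $\aA_j$ has support in $\cF(\sigma)$ so that the cosection localized Gysin map is applicable to it, but this is built into the definition of $\aA_j$ as a subquotient with respect to the ideal sheaf of $\cF(\sigma)$.
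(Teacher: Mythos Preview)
Your proposal is correct and follows essentially the same approach as the paper's proof: unwind Definition~\ref{42}, apply Proposition~\ref{40} term by term, then use \eqref{27} and Definition~\ref{virtual structure sheaf}. The paper's argument is identical in structure, only more terse.
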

\begin{proof}
By Proposition \ref{40}, \eqref{27} and Definition \ref{virtual structure sheaf}, we have 
$$\imath_*[\sO_{X,\loc}\virt]=\sum_{j \geq 0} \imath_*0^!_{\Ob_X, \sigma}(\aA_j)
=\sum_{j \geq 0} 0^!_{\Ob_X}[\aA_j]$$
$$=0^!_{\Ob_X}\sum_{j \geq 0} [\aA_j]=0^!_{\Ob_X}[\sO_{\fc_{X/S}}]=[\sO_X\virt]$$
as desired. 
\end{proof}

The cosection localized virtual structure sheaf is deformation invariant. The proof is rather standard and  can be found in the Appendix. 

\begin{rmk} \label{45} 
As discussed carefully in \cite{KiemLiKTheory}, we can be quite flexible in choosing a lift of $\aA$ to a class in $K_0(\cF')$. Above, we used \eqref{44} for simplicity but we could use left derived pullbacks $Lf^*$ and $L\tau^*$ instead of the ordinary pullbacks $f^*$ and $\tau^*$. With this derived choice, we have a homomorphism $$0^!_{\cF,\loc}:K_0(\cF(\sigma))\lra K_0(X(\sigma))$$
where $K_0(\cF(\sigma))$ is the Grothendieck group of the abelian category $\Coh_\sigma(\cF)$ of coherent sheaves on $\cF$ with support in $\cF(\sigma)$.     
\end{rmk}

\section{Virtual Torus Localization} \label{Torus localization section}

A virtual torus localization formula has been established at the level of intersection theory for virtual fundamental cycles in the cases of perfect \cite{GrabPand} and semi-perfect obstruction theory \cite{KiemLocalization} and  at the level of $K$-theory for virtual structure sheaves for perfect obstruction theory \cite{Qu}. In this section, we generalize the formula to the setting of virtual structure sheaves in $K$-theory obtained by an almost perfect obstruction theory. 

\medskip \subsection{$T$-equivariant almost perfect obstruction theory}

Let $T = \bC^*$ denote the one-dimensional torus and $X$ a Deligne-Mumford stack with an action of $T$. We denote the fixed locus by $F$. This is the closed substack locally defined by $\Spec A / (A^{mv})$ on an equivariant \'{e}tale chart $\Spec A \to X$, where $(A^{mv})$ denotes the ideal generated by weight spaces corresponding to non-zero $T$-weights. Finally, let
\begin{align*}
    \iota \colon F \lr X
\end{align*}
denote the inclusion map. For details on group actions on stacks, we refer the reader to \cite{Romagny}.

We can give the following definition, which generalizes directly the definition of an almost perfect obstruction theory.

\begin{defi} \emph{($T$-equivariant almost perfect obstruction theory)} \label{equivariant APOT}
Let $X$ be a Deligne-Mumford stack with a $T$-action. A \emph{$T$-equivariant almost perfect obstruction theory} $\phi$ consists of the following data:
\begin{enumerate}
    \item[(a)] A $T$-equivariant \'{e}tale covering $\lbrace X_\alpha \to X \rbrace_{\alpha \in A}$ of $X$.
    \item[(b)] For each index $\alpha \in A$, an object $E\lalp \in D([X\lalp/T])$ and a morphism $\phi_\alpha \colon E_\alpha \to \bL_{X_\alpha}$ in $D([X\lalp/T])$ which is a perfect obstruction theory on $X\lalp$.
\end{enumerate}
These are required to satisfy the following conditions:
\begin{enumerate}
\item For each pair of indices $\alpha, \beta$, there exists a $T$-equivariant isomorphism \begin{align*}
\psi_{\alpha \beta} \colon \Ob_{X_\alpha} \vert_{X_{\alpha\beta}} \lra \Ob_{X_\beta} \vert_{X_{\alpha\beta}}
\end{align*}
so that the collection $\lbrace \Ob_{X\lalp}=h^1(E_\alpha^\vee), \psi\lab \rbrace$ gives a descent datum of a sheaf $\Ob_X$, called the obstruction sheaf, on $X$.
\item For each pair of indices $\alpha, \beta$, there exists a $T$-equivariant \'{e}tale covering $\lbrace V_\lambda \to X\lab \rbrace_{\lambda \in \Gamma}$ of $X\lab=X_\alpha\times_XX_\beta$ such that for any $\lambda$, the perfect obstruction theories $\phi_\alpha \vert_{V_{\lambda}}$ and $\phi_\beta \vert_{V_{\lambda}}$ are isomorphic and compatible with $\psi\lab$. This means that there exists an isomorphism
\begin{align*}
    \eta_{\alpha\beta\lambda} \colon E_\alpha \vert_{V_{\lambda}} \lr E_\beta \vert_{V_{\lambda}}
\end{align*} 
in $D( [ V_\lambda / T])$ fitting in a commutative diagram
\begin{align} \label{loc 4.1}
    \xymatrix{
    E_\alpha \vert_{V_{\lambda}} \ar[d]_-{\phi\lalp|_{V_\lambda}} \ar[r]^-{ \eta_{\alpha\beta\lambda}} & E_\beta \vert_{V_{\lambda}} \ar[d]^-{\phi\lbet|_{V_\lambda}} \\
    \bL_{X_\alpha}|_{V_\lambda} \ar[r] \ar[dr]_\cong & \bL_{X_\beta}|_{V_\lambda} \ar[d]^\cong \\
    & \bL_{V_\lambda}
    }
\end{align}
which moreover satisfies $h^1(\eta_{\alpha\beta\lambda}^\vee) = \psi\lab^{-1}|_{V_\lambda}$.
\end{enumerate}
\end{defi}

In the above, $D([X\lalp / T])$ and $D([V\lal / T])$ denote the bounded derived categories of $T$-equivariant quasi-coherent sheaves on $U\lalp$ and $V\lal$ respectively.

\medskip \subsection{$T$-equivariant almost perfect obstruction theory on the fixed locus} Suppose that $X$ is a Deligne-Mumford stack with an action of $T$, equipped with a $T$-equivariant almost perfect obstruction theory as above. Let $F\lalp = X\lalp \times_X F$, so that $\lbrace F\lalp \to F \rbrace_{\alpha \in A}$ gives an \'{e}tale covering of the fixed locus $F$.

For each index $\alpha$, we have the decomposition 
\begin{align}
    E\lalp|_{F\lalp} = E\lalp|_{F\lalp}^{fix} \oplus E\lalp|_{F\lalp}^{mv}
\end{align}
into the $T$-fixed and moving part. Moreover, $\phi\lalp|_{F\lalp}$ similarly decomposes as a direct sum of
\begin{align} \label{loc 4.3}
    \phi\lalp^{fix} \colon E\lalp|_{F\lalp}^{fix} \to \bL_{U\lalp}|_{F\lalp}^{fix} \text{  and  }  \phi\lalp^{mv} \colon E\lalp|_{F\lalp}^{mv} \to \bL_{X\lalp}|_{F\lalp}^{mv}
\end{align}
Since $F\lalp$ has a trivial $T$-action, the morphism $\bL_{X\lalp}|_{F\lalp} \to \bL_{F\lalp}$ factors through
\begin{align} \label{loc 4.4}
    \bL_{X\lalp}|_{F\lalp}^{fix} \lr \bL_{F\lalp}
\end{align}
Composing \eqref{loc 4.3} and \eqref{loc 4.4} we obtain a morphism
\begin{align}
    \phi\lalp^F \colon E\lalp|_{F\lalp}^{fix} \lr \bL_{F\lalp}
\end{align}
By \cite{GrabPand}, this gives a perfect obstruction theory on $F\lalp$.

\begin{prop} \label{induced APOT on F}
The \'{e}tale covering $\lbrace F\lalp \to F \rbrace$ and the perfect obstruction theories $\phi\lalp^F \colon E\lalp|_{F\lalp}^{fix} \to \bL_{F\lalp}$ form an induced almost perfect obstruction theory $\phi^F$ on the fixed locus $F$ with obstruction sheaf $\Ob_{F} = \Ob_X |_{F}^{fix}$.
\end{prop}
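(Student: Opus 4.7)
\emph{Proof proposal.}
The plan is to verify the two conditions of Definition~\ref{APOT} for the collection $\{F\lalp \to F\}$ together with the induced perfect obstruction theories $\phi\lalp^F$. The guiding principle is that $T = \bC^*$ is linearly reductive, so the functor of taking the $T$-fixed part is exact on the category of $T$-equivariant (quasi)coherent sheaves. Consequently, it commutes with pullback along $T$-equivariant morphisms, with duality of locally free sheaves, and with the cohomology functors $h^i$ applied to any bounded $T$-equivariant complex of locally free sheaves representing $E\lalp$. All data entering Definition~\ref{APOT} on $F$ will be obtained by restricting the $T$-equivariant data on $X$ to the fixed locus and extracting $T$-fixed parts.

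First I would identify the obstruction sheaf. Decomposing $E\lalp^\vee|_{F\lalp}$ into fixed and moving parts and applying $h^1$ yields
\[ \Ob_{X\lalp}|_{F\lalp} \;\cong\; h^1\bigl((E\lalp|_{F\lalp}^{fix})^\vee\bigr) \,\oplus\, h^1\bigl((E\lalp|_{F\lalp}^{mv})^\vee\bigr), \]
where the first summand is the $T$-fixed part and coincides with the obstruction sheaf $\Ob_{F\lalp}$ of the induced perfect obstruction theory $\phi\lalp^F$ (as in \cite{GrabPand}). Hence $\Ob_{F\lalp} = \Ob_{X\lalp}|_{F\lalp}^{fix}$, which is consistent with the desired identification $\Ob_F = \Ob_X|_F^{fix}$.

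For condition (1), I would restrict each $T$-equivariant isomorphism $\psi\lab$ to $F\lab := X\lab \times_X F$ and take its $T$-fixed part, producing an isomorphism $\psi\lab^F \colon \Ob_{F\lalp}|_{F\lab} \to \Ob_{F\lbet}|_{F\lab}$. Functoriality of the fixed-part functor, applied to the cocycle identity satisfied by $\{\psi\lab\}$ on triple overlaps, yields the corresponding identity for $\{\psi\lab^F\}$, so these glue into the required descent datum for $\Ob_F = \Ob_X|_F^{fix}$. For condition (2), fix $\alpha,\beta$ and take the $T$-equivariant étale cover $\{V_\lambda \to X\lab\}$ with isomorphisms $\eta_{\alpha\beta\lambda}$ from Definition~\ref{equivariant APOT}(2). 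Setting $W_\lambda := V_\lambda \times_{X\lab} F\lab$, and using that fixed loci are compatible with $T$-equivariant étale base change, the collection $\{W_\lambda \to F\lab\}$ is an étale cover. Restricting $\eta_{\alpha\beta\lambda}$ to $W_\lambda$ and passing to $T$-fixed parts yields an isomorphism $\eta_{\alpha\beta\lambda}^F \colon E\lalp|_{W_\lambda}^{fix} \to E\lbet|_{W_\lambda}^{fix}$ in $D(W_\lambda)$. Taking $T$-fixed parts of the diagram~\eqref{loc 4.1} restricted to $W_\lambda$, combined with the factorization $\bL_{X\lalp}|_{F\lalp}^{fix} \to \bL_{F\lalp}$ from \eqref{loc 4.4}, produces the required commutative diagram for $\phi\lalp^F$ and $\phi\lbet^F$; the identity $h^1((\eta_{\alpha\beta\lambda}^F)^\vee) = (\psi\lab^F)^{-1}|_{W_\lambda}$ follows by applying $h^1 \circ (-)^\vee$ and extracting fixed parts from the analogous identity for $\eta_{\alpha\beta\lambda}$.

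The main obstacle I anticipate is handling the fixed/moving decomposition at the level of the derived category: one must ensure that $E\lalp$ can be represented by a $T$-equivariant two-term complex of locally free sheaves so that the decomposition $E\lalp|_{F\lalp} = E\lalp|_{F\lalp}^{fix} \oplus E\lalp|_{F\lalp}^{mv}$ and the morphisms $\eta_{\alpha\beta\lambda}^F$ are canonically defined and independent of the choices made. Once this is in place, the remaining verifications become formal consequences of the exactness of $T$-fixed parts for reductive group actions together with the functoriality of $T$-equivariant base change.
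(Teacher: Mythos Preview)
Your proposal is correct and follows essentially the same approach as the paper: verify conditions (1) and (2) of Definition~\ref{APOT} by restricting the $T$-equivariant data $\psi\lab$ and $\eta_{\alpha\beta\lambda}$ to the fixed locus and extracting $T$-fixed parts. The paper uses the notation $V\lal^T$ for the fixed locus where you write $W_\lambda = V_\lambda \times_{X\lab} F\lab$, but these coincide by compatibility of fixed loci with $T$-equivariant \'etale base change, and otherwise the arguments are identical.
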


\begin{proof}
We need to verify that conditions (1) and (2) in Definition~\ref{APOT} hold for the perfect obstruction theories on the given \'{e}tale cover of $F$. 

It is clear that $\Ob_{F\lalp} = h^1(E\lalp^\vee|_{F\lalp}^{fix}) = \Ob_{X\lalp}|_{F\lalp}^{fix}$. Since $F\lab = F\lalp \times_{F} F\lbet$ is the fixed locus of $X\lab = X\lalp \times_X X\lbet$ and $\psi\lab$ is $T$-equivariant we obtain induced isomorphisms
\begin{align}
    \psi\lab^F := \psi\lab|_{F\lab}^{fix} \colon \Ob_{F\lalp}|_{F\lab} \lr \Ob_{F\lbet}|_{F\lab}
\end{align}
which satisfy the cocycle condition and give descent data for the obstruction sheaf $\Ob_{F} = \Ob_{X}|_{F}^{fix}$.

Let $V\lal^T$ denote the fixed locus of $V\lal$. Similarly by $T$-equivariance, the isomorphisms $\eta_{\alpha \beta \lambda}$ induce isomorphisms 
$$\eta_{\alpha \beta \lambda}^F := \eta_{\alpha \beta \lambda}|_{V\lal^T}^{fix} \colon E\lalp|_{V\lal^T}^{fix} \lr E\lbet|_{V\lal^T}^{fix}$$
fitting in a commutative diagram
\begin{align} 
    \xymatrix{
    E_\alpha \vert_{V_{\lambda}^T}^{fix} \ar[d]_-{\phi\lalp^F|_{V_\lambda^T}} \ar[r]^-{ \eta_{\alpha \beta \lambda}^F} & E_\beta \vert_{V_{\lambda}^T}^{fix} \ar[d]^-{\phi\lbet^F|_{V_\lambda^T}} \\
    \bL_{F_\alpha}|_{V_\lambda^T} \ar[r] \ar[dr] & \bL_{F_\beta}|_{V_\lambda^T} \ar[d] \\
    & \bL_{V_\lambda^T}
    }
\end{align}
and satisfying $h^1 ( \eta_{\alpha\beta\lambda}^\vee|_{V\lal^T}^{fix} ) = \psi\lab^{-1}|_{F\lab}^{fix}$, as desired.
\end{proof}

Let $N\lalp\virt = (E\lalp|_{F\lalp}^{mv})^\vee$ be the virtual normal bundle of $F\lalp$ in $X\lalp$ and write $E\lalp^F =  E\lalp|_{F\lalp}^{fix}$ for brevity from now on.

In order to prove the torus localization formula in the next subsection, we will need to modify the almost perfect obstruction theory $\phi^F$ on $F$. To this end, we make the following assumption.

\begin{assum} \label{56}
There exists a two-term complex 
\begin{align*}
    N\virt = [N_0 \lr N_1]
\end{align*}
of locally free sheaves on $F$
and an isomorphism $\mu \colon h^1(N\virt) \to \Ob_\phi|_{F}^{mv}$ such that for any index $\alpha$ we have an isomorphism $N\virt|_{F\lalp} \cong N\lalp\virt$ whose homology in degree $1$ induces the restriction $\mu|_{F\lalp}$. We write $N^{-1}=N_1^\vee, N^0 = N_0^\vee$.
\end{assum}

This assumption may turn out to be unnecessary in the future but under the current state of technology, this is a weakest assumption for a proof of the virtual torus localization formula, Theorem \ref{46} below.   

We will compare the virtual structure sheaves $[\sO_X\virt]$ on $X$ and $[\sO_F\virt]$ on $F$ through an intermediate virtual structure sheaf $[\tilde\sO_F\virt]$, after introducing an auxiliary almost perfect obstruction theory on $F$. Here is an outline: 
\begin{enumerate}
\item (Proposition \ref{47}) $[\tilde\sO_F\virt]=[\sO_F\virt]\cap e(N_1)$ where $(\cdot)\cap e(N_1)$ is tensoring the Koszul complex $\wedge^\bullet N^{-1}$ for the zero section of $N_1$.
\item (Proposition \ref{48}) $\imath^![\sO_X\virt]=[\tilde\sO_F\virt]$ where $\imath:F\to X$ is the inclusion and $\imath^!$ is the virtual pullback defined in \cite{Qu}.
\item (Proposition \ref{49}) $\imath_*\xi=[\sO_X\virt]$ for some $\xi\in K_0(F)\otimes_\QQ \QQ(t)$.
\item (Theorem \ref{46}) Since $\xi\cap e(N_0)=\imath^!\imath_*\xi=\imath^![\sO_X\virt]=[\sO_F\virt]\cap e(N_1)$, we have $\xi=[\sO_F\virt]/e(N\virt)$ where $e(N\virt)=e(N_0)/e(N_1).$
 We thus obtain the torus localization formula
\beq\label{50} [\sO_X\virt]=\imath_*\frac{[\sO_F\virt]}{e(N\virt)}.\eeq
\end{enumerate}
In the subsequent subsections, we will work out the details of the outline. 

\medskip \subsection{An auxiliary almost perfect obstruction theory on the fixed locus}

We will introduce a new almost perfect obstruction theory on $F$ by adding the locally free sheaf $N_1$ to the obstruction sheaf and compare the virtual structure sheaves arising from the old and new almost perfect obstruction theories. 

For each index $\alpha$, we let
\begin{align} \label{enlarged POT}
    \tE\lalp^F = E\lalp^F \oplus N_1^\vee|_{F\lalp} [1] \text{  and  } \tphi\lalp^F \colon \tE\lalp^F \lr E\lalp^F \xrightarrow{\phi\lalp^F} \bL_{F\lalp},
\end{align}
where the first arrow $\tE\lalp^F \to E\lalp^F$ in the composition is projection onto the first summand. It is clear that $\tphi\lalp^F$ is a perfect obstruction theory on $F\lalp$ with obstruction sheaf $\Ob_{\tphi\lalp^F} = h^1( (\tE\lalp^F)^\vee ) = \Ob_{\phi\lalp^F} \oplus N_1|_{F\lalp}$.

\begin{prop} \label{enlarged APOT on F}
The \'{e}tale covering $\lbrace F\lalp \to F \rbrace$ and the perfect obstruction theories $\tphi\lalp^F \colon \tE\lalp^F \to \bL_{F\lalp}$ form an almost perfect obstruction theory $\tphi^F$ on $F$ with obstruction sheaf $\widetilde\Ob_{F} = \Ob_{F} \oplus N_1$.
\end{prop}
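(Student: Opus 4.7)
The plan is to extend the almost perfect obstruction theory data from Proposition~\ref{induced APOT on F} by incorporating the globally defined complex $N\virt$ provided by Assumption~\ref{56}. The enlarged perfect obstruction theory $\tilde\phi\lalp^F$ is, by construction \eqref{enlarged POT}, a direct sum in which the new summand $N_1^\vee|_{F\lalp}[1]$ contributes only to the obstruction sheaf and maps to zero in $\bL_{F\lalp}$. So the key observation is that the new descent data should arise as the direct sum of the descent data for $\phi^F$ with the trivial (identity) descent data on the global sheaf $N_1$.

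For condition (1) of Definition~\ref{APOT}, I would define the transition isomorphisms as
\[
\tilde\psi\lab := \psi\lab^F \oplus \id_{N_1|_{F\lab}} \colon \widetilde{\Ob}_{F\lalp}|_{F\lab} \lra \widetilde{\Ob}_{F\lbet}|_{F\lab},
\]
where $\psi\lab^F$ is the isomorphism from Proposition~\ref{induced APOT on F}. Since $N_1$ is globally defined on $F$ by Assumption~\ref{56}, the identities on $N_1$ trivially satisfy the cocycle condition, so the direct sum inherits the cocycle property from $\{\psi\lab^F\}$ and gives descent data for $\widetilde{\Ob}_F = \Ob_F \oplus N_1$.

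For condition (2), I would take the same $T$-equivariant \'etale cover $\{V_\lambda^T \to F\lab\}$ used in Proposition~\ref{induced APOT on F} and define
\[
\tilde\eta_{\alpha\beta\lambda} := \eta_{\alpha\beta\lambda}^F \oplus \id_{N_1^\vee|_{V_\lambda^T}[1]} \colon \tilde E\lalp^F|_{V_\lambda^T} \lra \tilde E\lbet^F|_{V_\lambda^T}.
\]
Commutativity of the diagram with $\bL_{V_\lambda^T}$ reduces to the analogous commutativity for $\eta_{\alpha\beta\lambda}^F$, because $\tilde\phi\lalp^F$ is by definition the composite through the projection onto $E\lalp^F$, so the $N_1^\vee[1]$ summand maps to zero in both $\bL_{F\lalp}$ and $\bL_{F\lbet}$. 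Finally, the identity $h^1(\tilde\eta_{\alpha\beta\lambda}^\vee) = \tilde\psi\lab^{-1}|_{V_\lambda^T}$ splits as a direct sum: on the $\Ob_F$ summand it is the corresponding identity from Proposition~\ref{induced APOT on F}, and on the $N_1$ summand it is the tautological equality of two identities.

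I do not expect a real obstacle here. All the genuine work was done in Proposition~\ref{induced APOT on F} (which required the $T$-equivariant decomposition of $\phi\lalp$ and the descent of its fixed part) and in the formulation of Assumption~\ref{56} (which is precisely what gives a global sheaf $N_1$ on $F$ with trivial descent data). Without Assumption~\ref{56}, one would only have $N_1$ defined on each $F\lalp$ with its own gluing data, and the direct-sum construction above would be circular. Granting the assumption, the proposition is essentially formal, and the check above is the whole argument.
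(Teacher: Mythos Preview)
Your proposal is correct and takes exactly the same approach as the paper: the paper's proof simply states that one uses $\tilde{\psi}\lab^F = \psi\lab^F \oplus \id_{N_1|_{F\lab}}$ and $\tilde{\eta}_{\alpha \beta \lambda}^F = \eta_{\alpha \beta \lambda}^F \oplus \id_{N_1|_{V\lal^T}}$ and refers back to Proposition~\ref{induced APOT on F}. Your write-up is in fact more detailed than the paper's, but the content is identical.
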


\begin{proof}
The proof is identical to that of Proposition~\ref{induced APOT on F} using $$\tilde{\psi}\lab^F = \psi\lab^F \oplus \id_{N_1|_{F\lab}} \quad \text{and}\quad \tilde{\eta}_{\alpha \beta \lambda}^F = \eta_{\alpha \beta \lambda}^F \oplus \id_{N_1|_{V\lal^T}}$$ for the appropriate compatibilities.
\end{proof}

The two almost perfect obstruction theories $\phi^F$ and $\tphi^F$ on $F$ induce virtual structure sheaves $[\oO_F\virt] \in K_0(F)$ and $[\tilde\oO_F\virt] \in K_0(F)$ respectively. These are related by the following formula.

\begin{prop} \label{47}
$[\tilde \oO_F\virt] = [\oO_F\virt]\cap e(N_1)\in K_0(F)$ 
 where $(\cdot)\cap e(N_1)$ denotes tensoring the Koszul complex $\wedge^\bullet (N^{-1})$ for the zero section of $N_1$. 
\end{prop}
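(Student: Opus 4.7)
The plan is to compute $[\tilde\oO_F\virt] = 0^!_{\widetilde{\Ob}_F}[\oO_{\tilde{\fc}_F}]$ directly by exploiting the direct sum structure $\widetilde{\Ob}_F = \Ob_F \oplus N_1$ and factoring the Koszul complex accordingly.

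First, I would identify the coarse intrinsic normal cone for $\tilde\phi^F$. Since $\tilde\phi_\alpha^F$ is defined as the composition of the projection $\tilde E_\alpha^F \to E_\alpha^F$ with $\phi_\alpha^F$, the induced map $h^1(\tilde\phi_\alpha^{F,\vee}) : \fn_{F_\alpha} \to \widetilde{\Ob}_{F_\alpha} = \Ob_{F_\alpha} \oplus N_1|_{F_\alpha}$ is the original embedding $\fn_{F_\alpha} \hookrightarrow \Ob_{F_\alpha}$ followed by the inclusion into the first summand (the zero section of $N_1$). Hence $\tilde{\fc}_F$ agrees with $\fc_F$ as a stack but is embedded in $\widetilde{\Ob}_F$ as $\fc_F \times 0 \subset \Ob_F \oplus N_1$.

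Next, I would work with a convenient atlas for $\widetilde{\Ob}_F$: for every local chart $Q' = (U, \rho, E', r_{E'})$ of $\Ob_F$, take $Q = (U, \rho, E' \oplus N_1|_U, r_{E'} \oplus \id_{N_1|_U})$ as a chart of $\widetilde{\Ob}_F$. Such split charts form an atlas, and on each one $(\oO_{\tilde{\fc}_F})_Q \cong \oO_{C' \times 0}$ where $C' = \fc_F|_U \times_{\Ob_F|_U} E' \subset E'$. Writing $E = E' \oplus N_1|_U$ with projections $p_1, p_2$, the Koszul complex splits as
\begin{equation*}
\cK(E) \;\cong\; p_1^* \cK(E') \otimes_{\oO_E} p_2^* \cK(N_1|_U),
\end{equation*}
and tensoring with $\oO_{C' \times 0} \cong p_1^* \oO_{C'} \otimes_{\oO_E} p_2^* \oO_U$ produces
\begin{equation*}
\cK(E) \otimes_{\oO_E} \oO_{C' \times 0} \;\cong\; p_1^*\!\left(\cK(E') \otimes_{\oO_{E'}} \oO_{C'}\right) \otimes_{\oO_E} p_2^*\!\left(\wedge^\bullet N_1^\vee|_U\right),
\end{equation*}
where the second factor is $\cK(N_1|_U)$ restricted to the zero section and so has vanishing differentials.

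Because $\wedge^\bullet N_1^\vee|_U$ has zero differential, the K\"unneth formula gives
\begin{equation*}
\hH^i_Q(\oO_{\tilde{\fc}_F}) \;\cong\; \bigoplus_{p+q=i} \hH^p_{Q'}(\oO_{\fc_F}) \otimes_{\oO_U} \wedge^q N_1^\vee|_U.
\end{equation*}
By the coherent descent results of Section~\ref{functoriality section}, these local sheaves glue canonically to global coherent sheaves on $F$, and passing to alternating sums via Definition~\ref{32} yields
\begin{equation*}
[\tilde\oO_F\virt] \;=\; 0^!_{\Ob_F}[\oO_{\fc_F}] \cdot \sum_{q \geq 0} (-1)^q [\wedge^q N_1^\vee] \;=\; [\oO_F\virt] \cap e(N_1),
\end{equation*}
as claimed. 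The main obstacle I anticipate is verifying that the K\"unneth decomposition is compatible with the transition data between split charts; this reduces to the naturality of the Koszul factorization under morphisms of local charts of the form $E_1' \oplus N_1 \to E_2' \oplus N_1$ and is a routine (if notationally dense) consequence of the functoriality machinery established in Section~\ref{functoriality section}.
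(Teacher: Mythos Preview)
Your proposal is correct and follows essentially the same approach as the paper. The paper's proof is more terse: after making the same observation that $j_{\tphi^F}$ factors through the first summand, it simply invokes the factorization $0^!_{\Ob_F\oplus N_1}=0^!_{N_1}\circ 0^!_{\Ob_F}$ (a consequence of the Koszul splitting established in Lemma~\ref{14}) to conclude immediately, whereas you unpack this factorization explicitly on split local charts via the K\"unneth decomposition --- but the underlying computation is identical.
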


\begin{proof}
Let $j_{\phi^F} \colon \fc_F \to \Ob_{F}$ be the induced embedding of the coarse intrinsic normal cone stack of $F$ into the obstruction sheaf stack of $\phi^F$. By the definition of $\tphi^F$, it is easy to see that the embedding $j_{\tphi^F}$ is the composition of $j_{\phi^F}$ with the inclusion $\Ob_{F} \to \widetilde\Ob_{F} = \Ob_{F} \oplus N_1$ as the first summand.

By Definition~\ref{virtual structure sheaf}, we have
$$[\tilde\oO_F\virt] = 0_{\widetilde{\Ob}_{F}}^![\oO_{\fc_F}] = 0_{\Ob_{F} \oplus N_1}^![\oO_{\fc_F}]$$
$$=0^!_{N_1}\left(0^!_{Ob_F}[\sO_{\fc_F}]\right)=0^!_{N_1}[\sO_F\virt]=[\sO_F\virt]\cap e(N_1)$$
as desired. 
\end{proof}

\medskip \subsection{Refined intersection with the fixed locus}
In this subsection, we prove Proposition \ref{48} below.

\begin{lem} \label{lemma 4.6}
$[\tilde\oO_F\virt] = 0_{\Ob_X|_F \oplus N_0}^! [\oO_{\fc_{F/\cC_X}}] \in K_0(F)$.
\end{lem}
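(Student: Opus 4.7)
The plan is to reduce the claim to a Gysin computation on a single auxiliary sheaf stack that mediates between $\widetilde{\Ob}_F$ and $\Ob_X|_F \oplus N_0$. By Proposition~\ref{47}, the left-hand side equals $0^!_{\widetilde{\Ob}_F}[\oO_{\fc_F}]$ with $\widetilde{\Ob}_F = \Ob_F \oplus N_1$, where $\fc_F$ is embedded in the summand $\Ob_F$. The goal is then to match this against the right-hand expression using the $T$-equivariant decomposition of $\Ob_X|_F$ and the two-term resolution $N\virt = [N_0 \to N_1]$ supplied by Assumption~\ref{56}.

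The first step is to describe $\fc_{F/\cC_X}$ as a closed substack of $\Ob_X|_F \oplus N_0$. Using $\Ob_X|_F = \Ob_F \oplus \Ob_X|_F^{mv}$ together with $\Ob_X|_F^{mv} = h^1(N\virt) = \coker(N_0 \to N_1)$, the distinguished triangle of cotangent complexes for the composition $F \hookrightarrow X \hookrightarrow \cC_X$ (the second arrow being the zero section of the intrinsic normal cone) should produce the desired embedding: over the fixed direction it recovers $\fc_F \hookrightarrow \Ob_F$, while the $N_0$ direction records moving deformations modulo the differential $d \colon N_0 \to N_1$.

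The second step is the actual comparison. Working on a local chart $U\lalp$ where the perfect obstruction theory $\phi\lalp$ admits a global resolution $E\lalp = [E\lalp^{-1} \to E\lalp^0]$, both Gysin expressions reduce to explicit Koszul complex calculations on a common local vector bundle of the form $E \oplus N_0 \oplus N_1$, where $E$ is a local presentation of $\Ob_F$. The comparison then follows from a spectral sequence argument for the two-step filtration $0 \subset \mathrm{im}(d) \subset N_1$: the off-diagonal terms cancel because $\fc_F$ is supported in the fixed direction while $\fc_{F/\cC_X}$ agrees with it modulo the graph of $d$ in the moving direction. Globalization via the descent Proposition~\ref{descent on F} and the functoriality properties of Section~\ref{functoriality section}, particularly Lemmas~\ref{14}, \ref{4}, and \ref{35}, then yields the equality.

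The main obstacle is the first step: making the description of $\fc_{F/\cC_X}$ inside $\Ob_X|_F \oplus N_0$ precise enough to support the local Koszul comparison. This is essentially an intrinsic-normal-cone statement for a composition of morphisms, complicated by the fact that $\cC_X$ is singular and $\Ob_X$ is only a coherent sheaf stack. The careful bookkeeping of fixed versus moving contributions against the two-term complex $[N_0 \to N_1]$ is where Assumption~\ref{56} becomes indispensable.
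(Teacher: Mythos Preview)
Your proposal is a plan rather than a proof, and you explicitly flag the decisive gap yourself: you do not know how to describe $\fc_{F/\cC_X}$ inside $\Ob_X|_F \oplus N_0$ precisely enough to run the local Koszul comparison. This is not a technical detail but the whole content of the lemma. The cone $\fc_{F/\cC_X}$ is a genuinely singular object sitting over the singular stack $\cC_X$, and there is no cotangent-complex triangle for $F \hookrightarrow X \hookrightarrow \cC_X$ that simply ``recovers $\fc_F \hookrightarrow \Ob_F$ in the fixed direction and the graph of $d$ in the moving direction''. Your heuristic that $\fc_{F/\cC_X}$ agrees with $\fc_F$ ``modulo the graph of $d$'' is not correct as stated and has no obvious precise formulation; even granting it, the spectral-sequence cancellation you invoke would require knowing the support and multiplicities of $\fc_{F/\cC_X}$ in the moving directions, which is exactly what is unavailable.

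The paper avoids this entirely by a deformation argument following \cite{KimKreschPant}. One builds the double deformation space $\wW = \mM^\circ_{F\times\bP^1/\mM_X^\circ}$ and extracts from it a closed substack $\zZ \subset \cC_{F\times\bP^1/\mM_X^\circ}$, flat over $\bP^1$, with fibers $\cC_F$ at $t=1$ and $\cC_{F/\cC_X}$ at $t=0$; flatness plus $[\bC_0]=[\bC_1]\in K_0(\bP^1)$ gives $[\oO_{\fc_F}]=[\oO_{\fc_{F/\cC_X}}]$ in $K_0(\fn_{F\times\bP^1/\mM_X^\circ})$. The almost perfect obstruction theories $\phi\lalp$ and $\tphi\lalp^F$ are then packaged into a mapping-cone diagram $c(\kappa\lalp)$ over $\bP^1$ whose $h^1$'s glue to a sheaf stack $\kK$ receiving $\fn_{F\times\bP^1/\mM_X^\circ}$; the fibers of $\kK$ at $1$ and $0$ are $\widetilde{\Ob}_F$ and $\Ob_X|_F\oplus N_0$ respectively, so applying $0^!_\kK$ to the equality above yields the lemma. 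The point is that one never needs an explicit description of $\fc_{F/\cC_X}$: the family does the work. This is the idea your plan is missing.
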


\begin{proof} The proof is an adaptation of a standard functoriality argument, following the lines of the proof of \cite[Theorem~4.3]{KiemSavvas}. We repeat a sketch of the argument here for the convenience of the reader.

Let $\mM_{X}^\circ \to \bP^1$ be the deformation of $\mathrm{Spec}\,\CC$ to the intrinsic normal cone stack $\cC_{X}$. Let $\wW = \mM_{F \times \bP^1 / \mM_{X}^\circ}^\circ$ be the double deformation space given by the deformation of $F \times \bP^1$ inside $\mM_{X}^\circ$ to its normal cone $\cC_{F \times \bP^1 / \mM_{X}^\circ}$. We also write $\nN_{F \times \bP^1 / \mM_X^\circ}$ for the intrinsic normal sheaf with coarse moduli sheaf the sheaf stack $\fn_{F \times \bP^1 / \mM_X^\circ}$. We have a morphism $\wW \to \bP^1 \times \bP^1$ and denote the two projections $\wW \to \bP^1$ by $\pi_1$ and $\pi_2$ respectively.

The fiber over $(1,0)$ is $\cC_F$, while the flat specialization at the point $(0,0)$ along $\lbrace 0 \rbrace \times \bP^1$ is $\cC_{F/\cC_X}$. 
In particular, the flat specialization at $(0,0)$ along $\bP^1 \times \lbrace 0 \rbrace$ is also $\cC_{F / \cC_X}$ meaning that there exists a closed substack $\zZ \subset \cC_{F \times \bP^1/ \mM_X^\circ}$, flat over $\bP^1$ with fibers
\begin{align}
    \zZ_t = \begin{cases}
    \cC_{F}, & t \neq 0 \\
    \cC_{F/\cC_X}, & t = 0
    \end{cases}
\end{align}
Thus
\begin{align*} 
    [\oO_\zZ \otimes_{\oO_{\bP^1}}^L \bC_0] & = [\oO_{\zZ_0} ] = [\oO_{\cC_{F/\cC_X}}] \\ 
    [\oO_\zZ \otimes_{\oO_{\bP^1}}^L \bC_1] & = [\oO_{\zZ_1} ] = [\oO_{\cC_F}],
\end{align*}
and since $[\bC_0] = [\bC_1] \in K_0(\bP^1)$ we obtain
\begin{align*}
    [\oO_{\cC_F}] = [\oO_{\cC_{F/\cC_X}}] \in K_0(\cC_{F \times\bP^1 / \mM_X^\circ}).
\end{align*}
Pushing forward to $\nN_{F \times\bP^1 / \mM_X^\circ}$, the equality holds in $K_0(\nN_{F \times\bP^1 / \mM_X^\circ})$ as well.

The same argument at the level of coarse moduli sheaves yields the equality
\begin{align} \label{loc 4.11}
    [\oO_{\fc_F}] = [\oO_{\fc_{F/\cC_X}}] \in K_0(\fn_{F \times\bP^1 / \mM_X^\circ}).
\end{align}

As in \cite{KimKreschPant}, for each $\alpha$ have a commutative diagram of exact triangles on $F\lalp \times \bP^1$
\begin{align*}
    \xymatrix{
    E\lalp|_{F\lalp}(-1) \ar[d] \ar[r]^-{\kappa\lalp} & E\lalp|_{F\lalp} \oplus \tE\lalp^F \ar[r] \ar[d] & c(\kappa\lalp) \ar[d] \ar[r] &\\
    \bL_{X\lalp}|_{F\lalp}(-1) \ar[r]_-{\mu\lalp} & \bL_{X\lalp}|_{F\lalp} \oplus \bL_{F\lalp} \ar[r] & c(\mu\lalp) \ar[r] &
    }
\end{align*}
where $\kappa\lalp = ( z_0 \cdot \id, z_1 \cdot g\lalp)$ with $z_0, z_1$ homogeneous coordinates on $\bP^1$ and $g\lalp$ is the canonical morphism from the inclusion $F_\alpha\to U_\alpha$.
The morphism 
$\mu\lalp$ is the restriction to $F\lalp$ of a global arrow $\mu$ and it is shown in \cite{KimKreschPant} that $h^1/h^0( c(\mu)^\vee ) = \nN_{F \times \bP^1 / \mM_X^\circ}$.

By the compatibilities afforded by the almost perfect obstruction theories and the definition~\eqref{enlarged POT} of $\tE\lalp^F$, we see that the closed embeddings
\begin{align*}
    h^1(c(\mu\lalp)^\vee) \lr h^1(c(\kappa\lalp)^\vee)
\end{align*}
glue to a global embedding of sheaf stacks on $X \times \bP^1$
\begin{align*}
    \fn_{F \times \bP^1 / \mM_X^\circ} \lr \kK
\end{align*}
Moreover, it is routine to check that the fiber of $\kK$ over $0 \in \bP^1$ is $\Ob_X|_F \oplus N_0$, while the fiber at $1 \in \bP^1$ is $\widetilde{\Ob}_{F}$.
Therefore, \eqref{loc 4.11} and the discussion preceding it imply that
\begin{align*}
    [\tilde\oO_F\virt] = 0_{\widetilde{\Ob}_{F}}^! [\oO_{\fc_F}] = 0_{\Ob_X|_F \oplus N_0}^! [\oO_{\fc_{F/\cC_X}}],
\end{align*}
as desired.
\end{proof}

By the definition of $\tE\lalp^F$, for any index $\alpha$, we have a commutative diagram of exact triangles
\begin{align} \label{loc 4.12}
\xymatrix{
E\lalp |_{F\lalp} \ar[d]^{\phi\lalp|_{F\lalp}} \ar[r]^-{g\lalp} & \tE\lalp^F \ar[d]^{\tphi\lalp^F} \ar[r] & N_0^\vee |_{F\lalp} [1] \ar[d]^-{\theta\lalp} \ar[r] & \\
\bL_{X\lalp} |_{F\lalp} \ar[r] & \bL_{F\lalp} \ar[r] & \bL_{F\lalp / X\lalp} \ar[r] &
}
\end{align}

By the following proposition, $\theta\lalp$ gives a perfect obstruction theory on the morphism $F\lalp \to U\lalp$.
\begin{prop}
The \'{e}tale covering $\lbrace F\lalp \to F \rbrace$ and the perfect obstruction theories $\theta\lalp \colon N_0^\vee |_{F\lalp}[1] \to \bL_{F\lalp/X\lalp}$ form an almost perfect obstruction theory $\theta$ on $\iota \colon F \to X$ with obstruction sheaf $\Ob_{F/X} = N_0$. 
\end{prop}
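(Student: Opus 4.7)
The plan is to verify the three defining conditions of an almost perfect obstruction theory in its relative version for $\iota\colon F \to X$: that each $\theta\lalp$ is a perfect obstruction theory on $F\lalp \to X\lalp$, that the obstruction sheaves $\Ob_{F\lalp/X\lalp}$ glue to $N_0$, and that the collection $\{\theta\lalp\}$ is compatible on a suitable refinement of the cover $\{F\lalp\to F\}$.

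First I would verify that each $\theta\lalp$ is itself a perfect obstruction theory. Since $\phi\lalp|_{F\lalp}$ and $\tphi\lalp^F$ are perfect obstruction theories (the latter by Proposition~\ref{enlarged APOT on F}) and the rows of \eqref{loc 4.12} are exact triangles in which $\theta\lalp$ is the third vertical arrow, applying the five lemma to the associated long exact sequences in cohomology gives that $h^0(\theta\lalp)$ is an isomorphism and $h^{-1}(\theta\lalp)$ is surjective. The obstruction sheaf is then $h^1((N_0^\vee|_{F\lalp}[1])^\vee) = h^1(N_0|_{F\lalp}[-1]) = N_0|_{F\lalp}$, as expected.

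For condition (1) of Definition~\ref{APOT}, the sheaves $N_0|_{F\lalp}$ already canonically agree on overlaps since $N_0$ is a globally defined locally free sheaf on $F$ by Assumption~\ref{56}. I would set $\psi\lab^\theta := \id_{N_0|_{F\lab}}$ and verify compatibility by tracing through the snake lemma applied to the dualized triangles of the top row of \eqref{loc 4.12} restricted to $F\lab$, using that the gluing data $\psi\lab$ for $\Ob_X$ and $\tilde\psi\lab^F = \psi\lab^F \oplus \id_{N_1|_{F\lab}}$ for $\widetilde{\Ob}_F$ are the identity on the $N_0$-contribution that emerges from the connecting map.

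For condition (2), I would take the refinement $\{V\lal^T \to F\lab\}$ arising from the fixed loci of the refinement of $\phi$ on $X\lab$. On each $V\lal^T$, the pair $(\eta_{\alpha\beta\lambda}|_{V\lal^T}, \tilde\eta_{\alpha\beta\lambda}^F)$ gives a morphism between the top rows of \eqref{loc 4.12}$|_{V\lal^T}$ whose commutativity follows from the construction $\tilde\eta_{\alpha\beta\lambda}^F = \eta_{\alpha\beta\lambda}^F \oplus \id_{N_1|_{V\lal^T}}$ together with compatibility of the canonical inclusions $E\lalp^F \hookrightarrow \tE\lalp^F$ with $\eta_{\alpha\beta\lambda}^{fix}$. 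The triangulated category axioms then yield an isomorphism
\[
\eta_{\alpha\beta\lambda}^\theta \colon N_0^\vee|_{V\lal^T}[1] \lr N_0^\vee|_{V\lal^T}[1]
\]
completing the morphism of triangles, and its compatibility with the bottom rows of \eqref{loc 4.12}$|_{V\lal^T}$ (that is, with $\bL_{F\lalp/X\lalp}|_{V\lal^T} \to \bL_{F\lbet/X\lbet}|_{V\lal^T}$) is inherited from the corresponding compatibilities \eqref{compatibility data APOT} for $\eta_{\alpha\beta\lambda}$ and $\tilde\eta_{\alpha\beta\lambda}^F$.

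The main obstacle I expect is the identity $h^1((\eta_{\alpha\beta\lambda}^\theta)^\vee) = \id = (\psi\lab^\theta)^{-1}|_{V\lal^T}$ and the cocycle condition for the $\eta_{\alpha\beta\lambda}^\theta$: because completion of a morphism of triangles to the third term is non-canonical, one must either pin down an explicit representative of the cone or argue that any ambiguity in $\eta_{\alpha\beta\lambda}^\theta$ vanishes on $h^1$ (and then normalize it). Once this is handled, the desired identity follows by tracing through the long exact sequences of the dualized top rows of \eqref{loc 4.12}$|_{V\lal^T}$, using that both $h^1(\eta_{\alpha\beta\lambda}^\vee) = \psi\lab^{-1}$ and $h^1((\tilde\eta_{\alpha\beta\lambda}^F)^\vee) = (\tilde\psi\lab^F)^{-1}$ restrict to the identity on the $N_0$-factor, yielding $\Ob_{F/X} = N_0$ as claimed.
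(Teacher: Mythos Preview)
Your proposal is correct and follows essentially the same approach as the paper: both verify that each $\theta\lalp$ is a perfect obstruction theory via the five lemma applied to the long exact sequence from~\eqref{loc 4.12}, and both indicate that conditions (1) and (2) of Definition~\ref{APOT} follow by a diagram chase using the compatibilities $\psi\lab$ and $\tilde{\eta}_{\alpha\beta\lambda}^F$ from Proposition~\ref{enlarged APOT on F}. The paper in fact omits the details you supply, noting only that $h^0$ vanishes automatically since $F\lalp \hookrightarrow X\lalp$ is a closed embedding (so $\bL_{F\lalp/X\lalp}$ is supported in degree $-1$), which slightly simplifies the five-lemma step; your more careful discussion of the non-canonicity of cone completions and how it is harmless on $h^1$ is a welcome elaboration of what the paper leaves implicit.
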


\begin{proof}
Since $F\lalp \to X\lalp$ is a closed embedding, $\bL_{F\lalp / X\lalp}$ is supported in degree $-1$. The long exact sequence in cohomology for the diagram~\eqref{loc 4.12} yields
\small
\begin{align*}
    \xymatrix{
    h^{-1}(E\lalp |_{F\lalp}) \ar[d]^{h^{-1}(\phi\lalp|_{F\lalp})} \ar[r] & h^{-1}(\tE\lalp^F) \ar[d]^{h^{-1}(\tphi\lalp^F)} \ar[r] & N_0^\vee |_{F\lalp} \ar[d]^-{h^{-1}(\theta\lalp)} \ar[r] & h^{0}(E\lalp |_{F\lalp}) \ar[d]^{h^{0}(\phi\lalp|_{F\lalp})} \ar[r] & h^{0}(\tE\lalp^F) \ar[d]^{h^{0}(\tphi\lalp^F)}\\
    h^{-1}(\bL_{X\lalp} |_{F\lalp}) \ar[r] & h^{-1}(\bL_{F\lalp} )\ar[r] & h^{-1}(\bL_{F\lalp / X\lalp}) \ar[r] & h^{0}(\bL_{X\lalp} |_{F\lalp}) \ar[r] & h^{0}(\bL_{F\lalp} )
    }
\end{align*}\normalsize

The two leftmost vertical arrows are surjections, while the two rightmost arrows are isomorphisms. Thus, by the five lemma, the middle arrow $h^{-1}(\theta\lalp)$ is a surjection. 
Since $h^0(N_0^\vee |_{F\lalp}[1]) = h^0(\bL_{F\lalp / U\lalp}) = 0$, $\theta\lalp$ is a perfect obstruction theory for the inclusion $F_\alpha\to X\lalp$.

The rest of the proof is a diagram chase using the definition~\eqref{enlarged POT} of $\tE\lalp^F$ and the compatibilities $\psi\lab$ from Definition~\ref{APOT} and $\tilde{\eta}_{\alpha \beta \lambda}^F$ from Proposition~\ref{enlarged APOT on F}. The details are omitted.
\end{proof}

The almost perfect obstruction theory $\theta$ induces a closed embedding
\begin{align*}
    j \colon \fc_{F/X} \lr \Ob_{F/X} = N_0
\end{align*}
Since $\iota \colon F \to X$ is an embedding, the coarse intrinsic normal cone stack $\fc_{F/X}$ coincides with the intrinsic normal cone $\cC_{F/X}$ and we obtain a virtual pullback by the formula
\begin{align*}
    \imath^! \colon K_0(X) \xrightarrow{\sigma_\imath} K_0(\cC_{F/X}) \xrightarrow{j_*} K_0(N_0) \xrightarrow{0_{N_0}^!} K_0(F),
\end{align*}
where $\sigma_\imath$ is the deformation to the normal cone (cf. \cite[\S2.1]{Qu}).

Now we can prove the following.
\begin{prop}\label{48} With the notation above, we have 
$$\iota^![\sO_X\virt]=[\sO_F\virt]\cap e(N_1).$$ 
\end{prop}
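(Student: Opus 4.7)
The plan is to combine the double deformation argument of Lemma~\ref{lemma 4.6} with the functoriality of the Gysin map on sheaf stacks developed in Section~\ref{functoriality section}, to reduce the claim to an identity already established in the previous subsection. By Proposition~\ref{47} it suffices to show $\iota^![\oO_X\virt] = [\tilde\oO_F\virt]$, and by Lemma~\ref{lemma 4.6} this is equivalent to
$$\iota^![\oO_X\virt] = 0^!_{\Ob_X|_F \oplus N_0}[\oO_{\fc_{F/\cC_X}}].$$
Expanding the left-hand side via the definitions of $\iota^!$ and $[\oO_X\virt]$, we are reduced to proving
$$0^!_{N_0}\,j_*\,\sigma_\iota\bigl(0^!_{\Ob_X}[\oO_{\fc_X}]\bigr) \;=\; 0^!_{N_0}\circ 0^!_{\Ob_X|_F}[\oO_{\fc_{F/\cC_X}}],$$
where in the last step I used the obvious factorization $0^!_{\Ob_X|_F \oplus N_0} = 0^!_{N_0}\circ 0^!_{\Ob_X|_F}$ coming from the direct sum decomposition of the sheaf stack.

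The first step is to commute the Gysin map $0^!_{\Ob_X}$ through the specialization $\sigma_\iota$. The deformation-to-the-normal-cone space $\mM_{F/X}^\circ \to \bP^1$ carries the sheaf stack $\Ob_X|_{\mM_{F/X}^\circ}$ together with the flat family $\fc_{X}|_{\mM_{F/X}^\circ}$, so by the flat pullback compatibility of Lemma~\ref{4} we may replace $\sigma_\iota \circ 0^!_{\Ob_X}[\oO_{\fc_X}]$ by $0^!_{\Ob_X|_{\cC_{F/X}}}$ applied to the specialization of $[\oO_{\fc_X}]$ at $0 \in \bP^1$. That specialization, by the standard identification of the intrinsic normal cone of $X$ along the deformation to $\cC_{F/X}$ (this is precisely the content used, in dual form, in the proof of Lemma~\ref{lemma 4.6}), is represented by $[\oO_{\fc_{F/\cC_X}}]$ viewed as a coherent sheaf on $\Ob_X|_F$ pulled back to $\Ob_X|_F\oplus N_0$ via the zero section of $N_0$. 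Composing with $j_*$ and $0^!_{N_0}$ then yields the right-hand side.

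The second step is to justify, \'etale-locally over $X$, that the above specialization procedure really does produce $[\oO_{\fc_{F/\cC_X}}]$ inside $\Ob_X|_F\oplus N_0$, and then to glue the local identities. Locally on each $X\lalp$ we have the honest perfect obstruction theory $\phi\lalp$, the enlarged theory $\tilde\phi\lalp^F$, and the relative theory $\theta\lalp$ fitting in the triangle~\eqref{loc 4.12}, so one can directly apply the perfect-obstruction-theory version of Manolache-style functoriality of virtual pullbacks at the level of Koszul resolutions. The compatibilities $\psi\lab$, $\tilde\eta_{\alpha\beta\lambda}^F$, and the construction of $\theta\lalp$ ensure that these local identities are compatible on overlaps, so Proposition~\ref{descent on F} glues them to the required global equality in $K_0(\Ob_X|_F\oplus N_0)$.

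The main obstacle will be step two: making the double deformation argument work uniformly for almost perfect obstruction theories, in particular verifying that the \'etale-local comparisons between $\fc_X|_{F\lalp}$, $\cC_{F\lalp/X\lalp}$, and $\fc_{F\lalp/\cC_{X\lalp}}$ glue correctly as coherent sheaves on the sheaf stack $\Ob_X|_F\oplus N_0$. Once this descent is in place, combining the two steps with Lemma~\ref{lemma 4.6} and Proposition~\ref{47} gives $\iota^![\oO_X\virt] = [\tilde\oO_F\virt] = [\oO_F\virt]\cap e(N_1)$, as required.
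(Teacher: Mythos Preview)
Your plan is correct and essentially the same as the paper's: reduce via Proposition~\ref{47} and Lemma~\ref{lemma 4.6} to the identity $\iota^![\oO_X\virt] = 0^!_{\Ob_X|_F \oplus N_0}[\oO_{\fc_{F/\cC_X}}]$, then establish this by commuting $0^!_{\Ob_X}$ past the deformation-to-the-normal-cone specialization using the compatibility Lemmas~\ref{4}, \ref{35}, \ref{14} of Section~\ref{functoriality section} (the paper packages this as a single commutative diagram obtained by descent from local charts). One small imprecision: $\fc_{F/\cC_X}$ sits inside $\Ob_X|_F \times_F C_{F/X} \hookrightarrow \Ob_X|_F \oplus N_0$, not over the zero section of $N_0$ as you wrote, but this does not affect the argument.
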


\begin{proof}
By Lemma \ref{lemma 4.6} and Proposition \ref{46}, it suffices to prove
\beq\label{51}\imath^! [\oO_X\virt] = 0_{\Ob_X|_F \oplus N_0}^! [\oO_{\fc_{F/\cC_X}}] \in K_0(F).\eeq

For each local chart $Q=(U,\rho, E, r_E)$ of $Ob_X$, let $F_U=F\times_XU$, $C=\fc_X|_U\times_{Ob_X|_U}E$ be the lift of the normal cone $\fc_X$ to $E$. 

For a closed immersion $Z\hookrightarrow W$, $M_{Z/W}^\circ$ denote the deformation to the normal cone, i.e. it is the blowup of $W\times \PP^1$ along $Z\times\{0\}$ with the strict transform of $W\times \{0\}$ deleted. We have a flat morphism $M_{Z/W}^\circ\to \PP^1$ whose fiber over $t\ne 0$ (resp. $t=0$) is $W$ (resp. the normal cone $C_{Z/W}$).

Then we have a commutative diagram
\small
\beq\label{53} \xymatrix{
C\ar@{^(->}[d] & C\times \AA^1 \ar[l]\ar@{^(->}[r] \ar@{^(->}[d] & M^\circ_{F_U/C}\ar@{^(->}[d] & C_{F_U/C}\ar@{_(->}[l]\ar@{^(->}[d] \\
E\ar@{->>}[d] & E\times \AA^1 \ar[l]_-{pr_1}\ar@{^(->}[r] \ar@{->>}[d] & M^\circ_{F_U/E}\ar@{->}[d] & C_{F_U/E}\ar@{_(->}[l]\ar@{->>}[d] \ar@{^(->}[r] & E|_{F_U}\oplus N_0|_{F_U}\ar@{->>}[d]\\
Ob_X|_U\ar[d] & Ob_X|_U\times \AA^1\ar[l]\ar[d]\ar[r] &Ob_X|_U\times_UM_{F_U/U}^\circ\ar[d]&Ob_X|_{F_U}\times_{F_U} C_{F_U/U}\ar[l] \ar@{^(->}[r] \ar[d] &
Ob_X|_{F_U}\oplus N_0|_{F_U}\ar[d]\\
U& U\times \AA^1\ar[l]_-{pr_1} \ar@{^(->}[r]  & M^\circ_{F_U/U} & C_{F_U/U}\ar@{_(->}[l] \ar@{^(->}[r] & N_0|_{F_U}.
}\eeq
\normalsize
By descent, we have a diagram
\small
\beq\label{54}\xymatrix{
K_0(X)\ar[r]^{pr_1^*} & K_0(X\times \AA^1) & K_0(M^\circ_{F/X})\ar[l]\ar[r]& \\
K_0(Ob_X)\ar[r]^{pr_1^*}\ar[u]_{0^!_{Ob_X}} & K_0(Ob_X|_{X\times\AA^1})\ar[u]_{0^!_{Ob_X|_{X\times\AA^1}}}
& K_0(Ob_X|_{M^\circ_{F/X}})\ar[u]_{0^!_{Ob_X|_{M^\circ_{F/X}}}}\ar[l]\ar[r] &
}\eeq
\[\xymatrix{
\ar[r]& K_0(C_{F/X})\ar[r]^{j_*} &K_0(N_0)\ar[r]^{0_{N_0}^!} & K_0(F)\\
\ar[r]& K_0(Ob_X|_F\times_F C_{F/X})\ar[r]\ar[u]_{0^!_{Ob_X|_{C_{F/X}}}} & K_0(Ob_X|_F\oplus N_0)\ar[u]^{0^!_{Ob_X|_{N_0}}}\ar[ur]_{0^!_{Ob_X|_F\oplus N_0}}
}\]
\normalsize
which is commutative by Lemmas \ref{4}, \ref{35} and \ref{14}.
Although there are left arrows in \eqref{54}, the composition of the horizontal arrows for the top row is well defined. For the bottom row, the composition of the horizontal arrows evaluated at $[\sO_{\fc_X}]$ is well defined and equal to $[\sO_{\fc_{F/\cC_X}}]$ by \eqref{53}. By the definition of $\imath^!$, $0^!_{Ob_X}[\sO_{\fc_X}]=[\sO_X\virt]$ is mapped to $\imath^![\sO_X\virt]$. Hence \eqref{51} follows from \eqref{54}. 
\end{proof}

Finally, the following standard equality for virtual pullbacks holds in our setting (cf. \cite[Proposition~2.14]{Qu}).

\begin{prop} \label{proposition 4.10}
For any $\aA \in K_0(F)$, $\imath^! \imath_* (\aA) = \aA\cap e(N_0)=\aA \cdot \wedge^\bullet(N^0)$.
\end{prop}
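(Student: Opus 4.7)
The plan is to trace through the three constituent operations of the virtual pullback $\imath^! = 0_{N_0}^! \circ j_* \circ \sigma_\imath$ applied to the pushforward class $\imath_*\aA$. The argument is a $K$-theoretic analogue of the classical fact that virtually pulling back a class supported on a regular subscheme amounts to capping with the $K$-theoretic Euler class of the normal bundle; it follows Qu's argument \cite[Proposition~2.14]{Qu} with minor modifications required by our sheaf-stack formalism. The first step would be to establish
\[
    \sigma_\imath [\imath_*\aA] = [0_*\aA] \in K_0(\cC_{F/X}),
\]
where $0 \colon F \hookrightarrow \cC_{F/X}$ denotes the zero section of the normal cone. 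This is the classical specialization formula for a pushforward along a closed embedding: the pullback of $\imath_*\aA$ to the deformation space $M_{F/X}^\circ \to \bP^1$ is realized as the pushforward along the trivial subfamily $F\times\bP^1 \hookrightarrow M_{F/X}^\circ$ of $\aA \boxtimes \oO_{\bP^1}$, whose generic fiber recovers $\imath_*\aA$ and whose special fiber over $0 \in \bP^1$ is $0_*\aA$. The identity $[\bC_0] = [\bC_1] \in K_0(\bP^1)$, used already in the proof of Lemma~\ref{lemma 4.6}, then yields the claim.

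For the second step, the composite $F \xrightarrow{0} \cC_{F/X} \xrightarrow{j} N_0$ is by construction the zero section $0_{N_0} \colon F \hookrightarrow N_0$, so applying $j_*$ gives $j_*[0_*\aA] = [0_{N_0,*}\aA] \in K_0(N_0)$. It then remains to compute $0_{N_0}^![0_{N_0,*}\aA]$ via Definition~\ref{32}. Since $N_0$ is locally free, the quadruple $(F,\id,N_0,\id)$ is itself a local chart of the sheaf stack $N_0$, and the Koszul complex $\cK(N_0) = \wedge^\bullet \pi^* N^0$ on the total space of $N_0$ resolves the structure sheaf of the zero section. The tensor product $\cK(N_0) \otimes 0_{N_0,*}\aA$ is scheme-theoretically supported on the zero section, where the Koszul differential, being contraction with the tautological section of $\pi^* N_0$, vanishes identically. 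Consequently $\hH_\cK^i(0_{N_0,*}\aA) = \aA \otimes \wedge^i N^0$, and summing with signs yields
\[
    0_{N_0}^![0_{N_0,*}\aA] = \sum_{i \ge 0}(-1)^i [\aA \otimes \wedge^i N^0] = \aA \cdot \wedge^\bullet(N^0) = \aA \cap e(N_0),
\]
as desired.

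The main obstacle is the first step: making the specialization identity $\sigma_\imath[\imath_*\aA] = [0_*\aA]$ fully rigorous in the present almost-perfect obstruction theory framework, where one must identify the flat pullback of $\imath_*\aA$ to the deformation space $M_{F/X}^\circ$ with the pushforward from the trivial subfamily $F\times\bP^1$ and then carefully restrict to the fiber at $0$. Once this classical specialization formula is established, the identification of the zero-section composite and the Koszul computation of the Gysin map are essentially formal consequences of the definitions.
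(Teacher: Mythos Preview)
Your proof is correct and follows essentially the same approach as the paper, which simply asserts $\sigma_\imath(\aA)=\aA\in K_0(\cC_{F/X})$ and then points to the Koszul computation of Proposition~\ref{47}. One remark: the ``main obstacle'' you flag is not actually present here, since $\imath^!$ is defined entirely via the ordinary normal cone $\cC_{F/X}$ and the honest vector bundle $N_0$, so the specialization step is the classical one for Deligne--Mumford stacks and the sheaf-stack formalism of the paper plays no role in this particular proposition.
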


\begin{proof}
This is an easy computation, similar to the proof of Proposition~\ref{47} (but simpler), using the fact that $\sigma_\iota(\aA) = \aA \in K_0(\cC_{F/X})$.  
\end{proof}

\medskip \subsection{Virtual torus localization formula} 
We are now ready to prove the virtual torus localization formula, using the results of the previous subsections.

All $K$-groups are now considered with $\bQ$-coefficients. We denote by $t$ the equivariant parameter so that $K_0^T(\Spec \bC) = \bQ[t,t^{-1}]$. 

We introduce some terminology.

\begin{defi}
Let $X$ be a Deligne-Mumford stack with an action of $T$ and $\imath \colon F \to X$ denote its $T$-fixed locus. We say that $X$ is \emph{admissible} for torus localization if the homomorphism $$\imath_* \colon K_0^T(F) \to K_0^T(X)$$ of $K_0^T(\Spec \bC) = \bQ[t,t^{-1}]$-modules becomes an isomorphism after tensoring with $\bQ(t)$.
\end{defi}

\begin{rmk}
By \cite{EdidinGraham}, if $X$ is an algebraic space with a $T$-action, then $X$ is admissible.
\end{rmk}

The following proposition shows that condition (a) in Definition~\ref{equivariant APOT} implies that Deligne-Mumford stacks with equivariant almost perfect obstruction theories are admissible.

\begin{prop} \label{49} 
Let $X$ be a Deligne-Mumford stack with an action of $T$ and a $T$-equivariant \'{e}tale atlas $f \colon U \to X$ of finite type. Then $X$ is admissible. 
\end{prop}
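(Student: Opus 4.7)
The plan is to reduce the statement to the Edidin--Graham theorem for algebraic spaces by exploiting the $T$-equivariant \'etale atlas $f \colon U \to X$. Because $X$ is Deligne--Mumford, $f$ is representable, so each iterated fibre product $U^{(n)} := U \times_X \cdots \times_X U$ ($n+1$ copies) is a $T$-equivariant algebraic space of finite type; in particular $R := U \times_X U \rightrightarrows U$ is the \'etale groupoid presenting $X$. Since $f$ is \'etale and $T$-equivariant, the formation of $T$-fixed loci commutes with these fibre products: $(U^{(n)})^T = U^{(n)} \times_X F$, and in particular $U^T \to F$ is itself a $T$-equivariant \'etale atlas.

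The first step is to apply the Edidin--Graham theorem \cite{EdidinGraham} to each algebraic space $U^{(n)}$, obtaining an isomorphism
\[
K_0^T((U^{(n)})^T) \otimes_{\bQ[t,t^{-1}]} \bQ(t) \xrightarrow{\sim} K_0^T(U^{(n)}) \otimes_{\bQ[t,t^{-1}]} \bQ(t).
\]

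The second step is to glue these level-wise isomorphisms to a statement on $X$. Using the standard presentation of coherent sheaves on a Deligne--Mumford stack via its \'etale atlas, $\mathrm{Coh}(X)$ is equivalent to the category of $R$-equivariant coherent sheaves on $U$, producing a presentation of $K_0^T(X)$ as a suitable colimit involving $K_0^T(U)$ and $K_0^T(R)$, and an entirely parallel presentation of $K_0^T(F)$ via $U^T$ and $R^T$. The pushforward $\iota_*$ respects these presentations, and since $\bQ(t)$ is a flat $\bQ[t,t^{-1}]$-module, the level-wise Edidin--Graham isomorphisms assemble (via a five-lemma or spectral-sequence comparison) into the desired isomorphism $\iota_* \otimes \bQ(t)$.

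The main obstacle I anticipate is making the $K_0$-level descent rigorous: descent of abelian categories is classical, but its interaction with $K_0$ is subtler in the presence of nontrivial stabilizers (for instance $X = BG$ admits a point as atlas while $K_0(BG) = R(G)$). A cleaner alternative that avoids any higher descent is Noetherian induction on $T$-invariant closed substacks of $X$: by the excision exact sequence it suffices to show $K_0^T(X \setminus F) \otimes \bQ(t) = 0$, which further reduces to $[\oO_Z] \otimes 1 = 0$ for every $T$-invariant integral closed substack $Z$ with $Z^T = \emptyset$. For such $Z$ one takes $U_Z := U \times_X Z$, an algebraic space with $U_Z^T = \emptyset$, so that $K_0^T(U_Z) \otimes \bQ(t) = 0$ by Edidin--Graham; a descent argument along the finite \'etale groupoid $U_Z \times_Z U_Z \rightrightarrows U_Z$, combined with the inductive hypothesis on proper closed $T$-invariant substacks of $Z$, then transports the vanishing back to $Z$, and hence to $X$.
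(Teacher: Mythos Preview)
Your outline correctly identifies the excision/Noetherian-induction framework, and indeed the paper's proof uses exactly that scaffold: it reduces to showing $K_\ast^T(M)\otimes\bQ(t)=0$ for $M=X\setminus F$. But the mechanism you propose for the vanishing---apply Edidin--Graham to the algebraic space $U_Z$ and then ``transport the vanishing back to $Z$'' by a descent argument along the groupoid $U_Z\times_Z U_Z\rightrightarrows U_Z$---has the same gap you yourself flagged in the first approach. Knowing $K_0^T(U_Z)\otimes\bQ(t)=0$ does \emph{not} formally imply $K_0^T(Z)\otimes\bQ(t)=0$: the pullback $f^*\colon K_0^T(Z)\to K_0^T(U_Z)$ need not be injective even with $\bQ$-coefficients (for a representable finite \'etale $f$ one has $f_*f^*[\fF]=[f_*\oO_U]\cdot[\fF]$, and $[f_*\oO_U]$ is typically not invertible when the stabilizers are nontrivial; compare $\mathrm{pt}\to BG$, where the kernel of $f^*$ is the augmentation ideal). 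Invoking the inductive hypothesis on proper closed substacks of $Z$ does not close this gap, because you still need to kill the $K$-theory of some dense open of $Z$, and that open is again a stack with nontrivial inertia.

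The paper sidesteps descent entirely. Using the \'etale atlas one extracts (as in Kresch's stratification) a $T$-invariant dense open $M_1\subset M$ that is a \emph{global finite quotient} $[W_1/G_1]$ with $W_1$ a scheme, $G_1$ finite, and $W_1^T=\emptyset$; then $K_\ast^T(M_1)=K_\ast^{T\times G_1}(W_1)$ on the nose. Edidin--Graham's localization theorem for the group $T\times G_1$ gives vanishing after localizing at the maximal ideal $\fm_{h_1}\subset R(G_1)[t,t^{-1}]$ determined by a generic $h_1\in T$; the remaining (and essential) trick is to upgrade this to vanishing after inverting all nonzero elements of $\bC[t,t^{-1}]$, which the paper does by iteratively reducing modulo the augmentation ideal $\fm_{G_1}$ and using that $R(G_1)$ is Artinian, so $\fm_{G_1}^N=0$ for $N\gg0$. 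This Artinian bootstrap is precisely the ingredient that replaces the descent step you left unexplained.
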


\begin{proof}
Let $M = X - F$ and $V = f^{-1}(M)$. Using the existence of the equivariant atlas $f \colon U \to X$, we have the excision exact sequence for $K$-theory of Deligne-Mumford stacks (cf. \cite[Proposition~3.3]{ToenRR}), and thus it suffices to show that 
\begin{align*}
    K_\ast^T(M) \otimes_{\bQ[t,t^{-1}]} \bQ(t) = 0
\end{align*}
or equivalently
\begin{align} \label{loc 4.19a}
    K_\ast^T(M) \otimes_{\bQ[t,t^{-1}]} \bC(t) = 0,
\end{align}
where $K_\ast^T(M)$ is the direct sum of all $T$-equivariant $K$-theory groups.

More generally, we will show that if $M$ is a Deligne-Mumford stack with a $T$-action and $T$-equivariant atlas $f \colon V \to M$ with empty fixed locus $M^T = \emptyset$, then \eqref{loc 4.19a} holds. In the rest of the proof, all $K$-groups are considered with $\bC$-coefficients.

Let $M_1 \sub M$ be an open substack such that $f \colon V_1 = f^{-1}(M_1) \to M_1$ is \'{e}tale of (maximal) degree $n$. $M_1$ is preserved by the $T$-action. Then, as in \cite[Proposition 4.5.5.(iii)]{KreschCycles}, we have that $M_1$ is isomorphic to the quotient of the complement of all diagonals in the $n$-fold product $V_1 \times_{M_1} ... \times_{M_1} V_1$ by the action of the symmetric group $S_n$. Thus $M_1$ is of the form $[W_1 / G_1]$ where $G_1$ is a finite group acting on a scheme $W_1$ and the $T$-action is given by an action on $W_1$ commuting with the $G_1$-action and such that $W_1^T = \emptyset$.

We write $\dim \colon R(G_1) \to \bC$ for the morphism induced by mapping every representation to its dimension with kernel the augmentation (maximal) ideal $\fm_{G_1}$. 

We thus have
$$K_\ast^T(M_1) = K_\ast^T([W_1/G_1]) = K_\ast^{T \times G_1}(W_1).$$

Since $W_1^T = \emptyset$, $T$ acts on $W_1$ with finite stabilizers and thus we may choose $h_1 \in T$ such that $W_1^{h_1} = \emptyset$. Then $h_1$ belongs to the center of $T \times G_1$ and so its conjugacy class is just $\lbrace h_1 \rbrace$. Therefore, \cite[Theorem~3.3(a)]{EdidinGraham} implies that
$$K_\ast^{T \times G_1}(W_1)_{\fm_{h_1}} = 0$$
where $\fm_{h_1}$ is the maximal ideal of the complex representation ring $R(T \times G_1) = R(G_1)[t,t^{-1}]$ generated by the augmentation ideal $\fm_{G_1}$ of $G_1$ and $(t-h_1)$. 

In particular, for any element $[x] \in K_\ast^{T \times G_1}(W_1)$ there exists a Laurent polynomial $\mu_1(t) \in R(G_1)[t,t^{-1}] - \fm_{h_1}$ such that
$$\mu_1(t) \cdot [x] = 0 \in K_\ast^{T \times G_1}(W_1).$$
Letting $\ell_1 (t) = \mu_1 (t) \mod \fm_{G_1} \in \bC[t,t^{-1}]$, we have $\ell_1(t) \neq 0$ and 
$$\ell_1(t) \cdot [x] \in \fm_{G_1} K_\ast^{T \times G_1} (W_1)$$
so that
$$\ell_1(t) \cdot [x] = \sum m_i [x_i]$$
with $m_i \in \fm_{G_1}$ and $[x_i] \in K_\ast^{T \times G_1} (W_1)$.

Repeating the argument for the $[x_i]$, we can find a non-zero Laurent polynomial $\ell_2(t) \in \bC[t,t^{-1}]$ such that
$$\ell_2(t) \ell_1(t) \cdot [x] \in \fm_{G_1}^2 K_\ast^{T \times G_1} (W_1)$$
and continuing inductively, for any positive integer $N$, non-zero Laurent polynomials 
$\ell_1(t), \ell_2(t), ..., \ell_N(t) \in \bC[t,t^{-1}]$ such that
$$\ell_N(t) ... \ell_1(t) \cdot [x] \in \fm_{G_1}^N K_\ast^{T \times G_1} (W_1).$$

Since $R(G_1)$ is an Artinian $\bC$-algebra of finite type, we have $\fm_{G_1}^N = 0$ for large enough $N$. We conclude that for any $[x] \in K_\ast^{T\times G_1} (W_1)$ there exists a non-zero Laurent polynomial $\ell(t) \in \bC[t,t^{-1}]$ such that
$$\ell(t) \cdot [x] = 0 \in K_\ast^{T\times G_1} (W_1)$$
which implies that
$$K_\ast^T(M_1) \otimes \bC(t) = K_\ast^{T\times G_1} (W_1) \otimes \bC(t) = 0$$

Letting $Z_1 = M - M_1$ to be the complement of $M_1$, by excision we are reduced to showing that $K_\ast^T(Z_1) \otimes \bC(t) = 0$. Repeating the above argument and using Noetherian induction concludes the proof.
\end{proof}

\begin{thm}\label{46} \emph{(Virtual torus localization formula)} Let $X$ be a Deligne-Mumford stack with an action of $T$ and a $T$-equivariant almost perfect obstruction theory $\phi$ such that Assumption \ref{56} holds. Let $F$ denote the $T$-fixed locus of $X$ and $\phi^F$ its induced almost perfect obstruction theory. Then
\begin{align*}
    [\oO_X\virt] = \iota_* \frac{[\oO_F\virt]}{e(N\virt)} \in  K_0^T(X) \otimes_{\bQ[t,t^{-1}]} \bQ(t)
\end{align*}
where the Euler class $e(N\virt)=e(N_0)/e(N_1)=\Lambda_{-1}^T((N\virt)^\vee)$ denotes multiplication by $\wedge^\bullet N^0/\wedge^\bullet N^{-1}$. 
\end{thm}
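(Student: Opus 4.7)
\begin{sproof}
The plan is to realize the four-step outline given before Assumption~\ref{56} as a clean ``divide and conquer'' calculation in the localized equivariant $K$-group $K_0^T(X)\otimes_{\bQ[t,t^{-1}]}\bQ(t)$. Throughout, all classes are taken with $\bQ$-coefficients.

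The first step is to invoke Proposition~\ref{49}: since the equivariant almost perfect obstruction theory supplies a $T$-equivariant \'etale atlas $\sqcup X\lalp \to X$ of finite type, $X$ is admissible, meaning that
\[\iota_\ast \colon K_0^T(F)\otimes_{\bQ[t,t^{-1}]}\bQ(t) \;\lra\; K_0^T(X)\otimes_{\bQ[t,t^{-1}]}\bQ(t)\]
is an isomorphism. Hence there is a unique class $\xi \in K_0^T(F)\otimes \bQ(t)$ with $\iota_\ast\xi=[\oO_X\virt]$, and the theorem amounts to identifying $\xi$.

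To compute $\xi$ I would apply the virtual pullback $\iota^!$ attached to the induced almost perfect obstruction theory $\theta$ on the closed immersion $\iota\colon F\to X$. Proposition~\ref{proposition 4.10} gives
\[\iota^!\iota_\ast \xi \;=\; \xi\cap e(N_0) \;=\; \xi\cdot \wedge^\bullet N^0,\]
while Proposition~\ref{48} gives
\[\iota^![\oO_X\virt] \;=\; [\oO_F\virt]\cap e(N_1) \;=\; [\oO_F\virt]\cdot \wedge^\bullet N^{-1}.\]
Equating the two yields the single identity
\[\xi\cdot e(N_0) \;=\; [\oO_F\virt]\cdot e(N_1) \quad\text{in }K_0^T(F)\otimes \bQ(t),\]
after which dividing by $e(N_0)$ and pushing forward by $\iota_\ast$ produces the claimed formula $[\oO_X\virt]=\iota_\ast\bigl([\oO_F\virt]/e(N\virt)\bigr)$ with $e(N\virt)=e(N_0)/e(N_1)$.

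The main obstacle I expect is justifying the division by $e(N_0)=\wedge^\bullet N^0$, i.e.\ its invertibility in the localized group $K_0^T(F)\otimes\bQ(t)$. Since $T$ acts trivially on $F$ while $N_0$ is assembled from the moving part $(E\lalp|_{F\lalp})^{mv}$ on each chart (by Assumption~\ref{56}), every $T$-weight space of $N_0$ carries a nonzero character of $T$. By an Edidin--Graham-style argument (in the spirit of the proof of Proposition~\ref{49}), one locally splits $N_0$ into weight-eigenbundles of nonzero weight and observes that each corresponding factor of the form $1-w\otimes t^k$ with $k\neq 0$ is a unit in $\bQ(t)$-coefficients. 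Descent then globalizes this to invertibility of $e(N_0)$ in $K_0^T(F)\otimes\bQ(t)$, after which the formal manipulation above is rigorous and the theorem follows. A subsidiary point requiring care is that $\iota^!$, constructed via deformation to the normal cone on charts and glued by descent, really is $T$-equivariant; this is automatic from the $T$-equivariance of the perfect obstruction theories $\phi\lalp$ and of all the gluing data $\psi\lab$, $\eta_{\alpha\beta\lambda}$.
\end{sproof}
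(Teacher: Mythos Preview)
Your proposal is correct and follows essentially the same approach as the paper: both arguments combine Proposition~\ref{49} (admissibility, so $\iota_*$ is an isomorphism after localization), Proposition~\ref{48} ($\iota^![\oO_X\virt]=[\oO_F\virt]\cap e(N_1)$), and Proposition~\ref{proposition 4.10} ($\iota^!\iota_*(-)=(-)\cap e(N_0)$), together with the invertibility of $e(N_0)$ in $K_0^T(F)\otimes\bQ(t)$. The paper phrases the final step as ``$\iota^!/e(N_0)$ is the inverse of $\iota_*$, so $\iota^!$ is injective,'' whereas you first name $\xi=\iota_*^{-1}[\oO_X\virt]$ and solve for it, but these are logically equivalent.
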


\begin{proof}
By Propositions \ref{48} and \ref{proposition 4.10}, we have
\begin{align} \label{loc 4.21}
    \imath^! \imath_*  \frac{[\oO_F\virt]}{e(N\virt)} = \imath^! \imath_* \left( \frac{[\oO_F\virt]\cap e(N_1)}{e(N_0)} \right) = [\oO_F\virt]\cap e(N_1)=\imath^![\sO_X\virt]
\end{align}
By the previous proposition, $\imath_*$ becomes an isomorphism after tensoring with $\bQ(t)$. Hence,  Proposition~\ref{proposition 4.10} implies that $\frac{\imath^!}{e(N_0)}$ is the inverse of $\imath_*$. In particular, $\imath^!$ is injective. Thus the virtual torus localization formula follows from \eqref{loc 4.21}.
\end{proof}

\section{Torus Localization of Cosection Localized Virtual Structure Sheaf} \label{combination section}

Torus localization for cosection localized virtual cycles has been established in \cite{ChangKiemLi} and in \cite{KiemLocalization} in the settings of perfect and semi-perfect obstruction theory respectively. In this section, we prove the corresponding statement for virtual structure sheaves obtained by an almost perfect obstruction theory. As usual, $T$ denotes the torus $\bC^\ast$.

Let $X$ be a Deligne-Mumford stack with a $T$-action and a $T$-equivariant almost perfect obstruction theory $\phi$ given by perfect obstruction theories $\phi\lalp \colon E\lalp \to \bL_{X\lalp}$ on a $T$-equivariant \'{e}tale cover $\lbrace \rho\lalp \colon X\lalp \to X \rbrace$. Moreover, suppose that we have a $T$-invariant cosection
\begin{align*}
    \sigma \colon \Ob_X \lr \oO_X.
\end{align*}

We use the same notation as in \S\ref{Scos} and \S\ref{Torus localization section}. Let $F(\sigma) = F \times_X X(\sigma)$.

By Proposition~\ref{induced APOT on F}, the fixed locus $F$ admits an almost perfect obstruction theory $\phi^F$ on the \'{e}tale cover $\lbrace F\lalp \to F \rbrace$ with obstruction sheaf $$\Ob_{F} = \Ob_X |_F^{fix}.$$
Since the cosection $\sigma$ is $T$-invariant, $\sigma|_F$ factors through a morphism
\begin{align*}
    \sigma_F \colon \Ob_{F} \lr \oO_F
\end{align*}
whose zero locus is precisely $F(\sigma)$.
Therefore, by Definition \ref{42}, we have cosection localized virtual structure sheaves
\begin{align*}
    [\oO_{X,\loc}\virt] \in K_0(X(\sigma)), \quad  [\oO_{F,\loc}\virt] \in K_0(F(\sigma)).
\end{align*}

\begin{thm} \label{55}
Let $\imath \colon F(\sigma) \to X(\sigma)$ denote the inclusion and suppose that Assumption \ref{56} holds. Then
\begin{align*}
    [\oO_{X,\loc}\virt] = \iota_*  \frac{[\oO_{F,\loc}\virt]}{e(N\virt)}  \in K_0^T(X(\sigma)) \otimes_{\bQ[t,t^{-1}]} \bQ(t).
\end{align*}
\end{thm}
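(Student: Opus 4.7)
The plan is to mirror the proof of Theorem~\ref{46} step by step, upgrading each ingredient from ordinary to cosection localized $K$-theory. A useful preliminary observation is that since $\sigma$ is $T$-invariant, its restriction to the moving part $\Ob_X|_F^{mv}$ vanishes automatically (any equivariant map to the trivial weight sheaf $\oO_X$ must kill non-trivial weight spaces), so $\sigma|_F$ factors as $\sigma_F \colon \Ob_F \to \oO_F$ on the fixed part with vanishing locus $F(\sigma)$. In particular $N_0$ and $N_1$ both carry the trivial cosection, and the Euler class $e(N\virt) = e(N_0)/e(N_1)$ may be inserted harmlessly into cosection localized formulas.

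First I would establish the cosection localized analog of Proposition~\ref{47}, namely $[\tilde\oO_{F,\loc}\virt] = [\oO_{F,\loc}\virt] \cap e(N_1)$ in $K_0(F(\sigma))$, where the enlarged almost perfect obstruction theory $\tilde\phi^F$ of Proposition~\ref{enlarged APOT on F} is equipped with the extended cosection $\tilde\sigma_F = (\sigma_F, 0)$ on $\widetilde\Ob_F = \Ob_F \oplus N_1$ whose vanishing locus remains $F(\sigma)$. Since the inclusion $\Ob_F \hookrightarrow \widetilde\Ob_F$ has locally free cokernel $N_1$ on which the extended cosection vanishes, the compatibility of the Gysin map with pullback along such inclusions (Lemma~\ref{14}) carries over to the cosection localized Gysin map of Definition~\ref{39} and produces the asserted equality.

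The technical heart of the argument is the cosection localized analog of Proposition~\ref{48}, namely $\iota^! [\oO_{X,\loc}\virt] = [\tilde\oO_{F,\loc}\virt]$, where $\iota^!$ is the virtual pullback associated to the almost perfect obstruction theory $\theta$ on $F \to X$ with obstruction sheaf $N_0$. Triviality of the $T$-invariant cosection on the moving sheaf $N_0$ ensures that $\iota^!$ sends classes supported on $X(\sigma)$ to classes supported on $F(\sigma)$. The plan is to rerun the double deformation to the normal cone of Lemma~\ref{lemma 4.6} equivariantly and with a globally defined cosection along the entire double deformation space $\wW$. Concretely, I would verify that $\sigma$ extends to a $T$-equivariant cosection on the sheaf stack $\kK$ over $X \times \bP^1$ built there, that the fiberwise vanishing loci interpolate correctly between $F(\sigma)$ and $X(\sigma)$, and that the flat family $\zZ \sub \cC_{F \times \bP^1 / \mM_X^\circ}$ lies inside the cosection vanishing sheaf substack. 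The deformation invariance of the cosection localized virtual structure sheaf proved in the Appendix then upgrades the equality $[\oO_{\cC_F}] = [\oO_{\cC_{F/\cC_X}}]$ to a corresponding equality for cosection localized Gysin pullbacks, yielding the required identity in $K_0(F(\sigma))$.

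Given these pieces, the remainder follows the proof of Theorem~\ref{46} faithfully: the cosection localized analog of Proposition~\ref{proposition 4.10}, $\iota^! \iota_*(\alpha) = \alpha \cap e(N_0)$ for $\alpha \in K_0(F(\sigma))$, is a direct deformation-to-the-normal-cone computation using triviality of the cosection on $N_0$. Since $X(\sigma)$ inherits a $T$-equivariant étale atlas from $X$, Proposition~\ref{49} implies that $\iota_*$ becomes an isomorphism after tensoring with $\bQ(t)$ and hence $\iota^!$ is injective, so combining the three identities produces the formula. The main obstacle is the second step, where one must carefully set up the cosection along the double deformation space and invoke the Appendix's deformation invariance at the correct point; once that is done, the remaining arguments are transcriptions of those in \S\ref{Torus localization section}.
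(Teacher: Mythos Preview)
Your proposal is correct and follows essentially the same route as the paper's proof: both rerun the steps of Theorem~\ref{46} (Propositions~\ref{47}, \ref{48}, \ref{proposition 4.10}, \ref{49}) in cosection localized form, using the extended cosection $\tilde\sigma_F = (\sigma_F,0)$ on $\widetilde\Ob_F$, constructing the extended cosection $\bar\sigma$ on the sheaf stack $\kK$ over the double deformation space, and invoking the support condition $\fc_{F\times\bP^1/\mM_X^\circ} \subset \kK(\bar\sigma)$ together with the Appendix. The paper attributes the existence of $\bar\sigma$ and the support result to \cite[Section~5]{KiemLiCosection} rather than reverifying them, but otherwise your outline matches.
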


\begin{proof}
The proof proceeds along the same steps of the proof of Theorem \ref{46} with minor modifications to account for the presence of the cosections $\sigma$ and $\sigma_F$.

The proof of Proposition \ref{47} goes through, using the cosection 
$$\tilde{\sigma}_F \colon \widetilde{\Ob}_{F} = \Ob_{F} \oplus N_1 \lr \Ob_{F} \xrightarrow{\sigma_F} \oO_F$$
and working with the sheaves $\aA_j$ defined in \eqref{def of A_j}. 

For Lemma~\ref{lemma 4.6}, it is shown in \cite[Section~5]{KiemLiCosection} that there exists an extended cosection $\bar{\sigma}$ for $\kK$ which restricts to the cosection $\tilde{\sigma}^F$ over $1 \in \bP^1$ and the cosection
$$\Ob_X |_F \oplus N_0 \lr \Ob_X|_F \xrightarrow{\sigma|_F} \oO_F$$
over $0 \in \bP^1$. Furthermore, it is shown that the coarse intrinsic normal cone $\fc_{F \times \bP^1 / \mM_X^\circ}$ has reduced support in $\kK(\bar{\sigma})$. A more detailed account of the same argument is also given in Appendix~\ref{appendix}. With these considerations, the proof goes through identically.

Finally, in Proposition \ref{48}, it is easy to check that all the sheaves and cones have the appropriate supports with respect to the cosections at hand.
\end{proof}

\section{Applications} \label{applications section}

We now discuss some applications of the theory developed thus far. We establish a wall-crossing formula for simple $\bC^\ast$-wall crossing in the setting of almost perfect obstruction theory, using the torus localization formula of Theorem \ref{46}. Moreover, we show that the Jiang-Thomas dual obstruction cone in  \cite{JiangThomas} admits an almost perfect obstruction theory and thus gives rise to $K$-theoretic invariants, using the combination of torus localization and cosection localization in Theorem \ref{55}.

\medskip \subsection{$K$-theoretic simple $\bC^\ast$-wall crossing} In this subsection, we establish a $K$-theoretic wall crossing formula for simple $\bC^\ast$-wall crossing, following the construction given in \cite{KiemLi}.

Let $X$ be a Deligne-Mumford stack acted on by the torus $T = \bC^\ast$ and equipped with a $T$-equivariant almost perfect obstruction theory $\phi$ consisting of perfect obstruction theories $\phi\lalp \colon E\lalp \to \bL_{X\lalp}$ on a $T$-equivariant \'{e}tale cover $\lbrace X\lalp \to X\rbrace$. Let $F$ denote the fixed locus.
We assume that Assumption \ref{56} holds, so that we have the two-term complex $N\virt = [ N_0 \to N_1]$ of locally free sheaves on $F$.
Let
\begin{enumerate}
    \item $X^s$ be the open substack of $X$, consisting of $x \in X$ such that the orbit $T \cdot x$ is $1$-dimensional and closed in $X$;
    \item $\Sigma_\pm^0 = \lbrace x \in X - \left( X^s \cup F \right) \ \vert \ \lim_{t \to 0} t^{\pm 1} \cdot x \in F \rbrace$;
    \item $\Sigma_\pm = \Sigma_\pm^0 \cup F$;
    \item $X_\pm = X - \Sigma_\mp \sub X$;
    \item $M_\pm = [ X_\pm / T ] \sub M = [X / T]$.
\end{enumerate}
We assume that $M_\pm$ are separated Deligne-Mumford stacks.

The master space associated to the wall crossing $M_\pm$ is defined by
\begin{align} \label{master space}
    \mM = [X \times \bP^1 - \Sigma_- \times \lbrace 0 \rbrace - \Sigma_+ \times \lbrace \infty \rbrace / \bC^\ast]
\end{align}
where $\bC^\ast$ acts trivially on $X$ and on $\bP^1$ by $t \cdot (a : b) = (a : tb)$. $\mM$ admits an \'{e}tale cover $\lbrace \mM\lalp \to \mM \rbrace$ where 
\begin{align*}
    \mM\lalp = [X\lalp \times \bP^1 - ( \Sigma_- \times_X X\lalp ) \times \lbrace 0 \rbrace - (\Sigma_+ \times_X X\lalp) \times \lbrace \infty \rbrace / \bC^\ast].
\end{align*}
The $T$-action on $X$ induces an action of $T$ on $\mM$ with fixed locus
\begin{align*}
    M_+ \sqcup F \sqcup M_-.
\end{align*}

For each index $\alpha$, the pullback of $\phi\lalp$ to $X\lalp \times \bP^1$ is $\bC^\ast$-equivariant and therefore by descent we obtain a morphism $\bar{\phi}\lalp \colon \bar{E}\lalp \to \bL_{\mM\lalp}$.

\begin{prop}
The morphisms $\bar{\phi}\lalp \colon \bar{E}\lalp \to \bL_{\mM\lalp}$ on the \'{e}tale cover $\lbrace \mM\lalp \to \mM \rbrace$ form a $T$-equivariant almost perfect obstruction theory $\bar{\phi}$ on $\mM$.
\end{prop}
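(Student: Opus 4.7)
The plan is to verify the two conditions of Definition~\ref{equivariant APOT} for $\bar\phi$ by descending the corresponding data for $\phi$ along the $\bC^\ast$-quotient used to define $\mM$. Throughout, one must carefully distinguish the two commuting tori in play: the original $T$ acting on $X$ (which descends to $\mM$ because the removed loci $\Sigma_\pm \times \{0\}, \Sigma_+ \times \{\infty\}$ are $T$-invariant, being defined via limits of the $T$-action), and the separate $\bC^\ast$ acting on $\bP^1$ by $t\cdot(a\!:\!b)=(a\!:\!tb)$ that is used in the quotient \eqref{master space}.

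First I would check that each $\bar\phi\lalp$ is a $T$-equivariant perfect obstruction theory on $\mM\lalp$. Since the projection $X\lalp\times\bP^1 \to X\lalp$ is smooth, the pullback of $\phi\lalp$ gives a perfect obstruction theory on $X\lalp\times\bP^1$ (relative to $\bP^1$ as well as absolute, $\bL_{\bP^1}$ being locally free), and this pullback is $T\times\bC^\ast$-equivariant by the $T$-equivariance of $\phi\lalp$ and the trivial $T$-action on $\bP^1$. Restricting to the open substack obtained by removing $(\Sigma_-\cap X\lalp)\times\{0\}\cup(\Sigma_+\cap X\lalp)\times\{\infty\}$ and descending along the $\bC^\ast$-quotient (which is a well-defined stacky quotient giving a Deligne-Mumford stack by the separatedness assumption on $M_\pm$), one obtains the $T$-equivariant morphism $\bar\phi\lalp\colon \bar E\lalp\to\bL_{\mM\lalp}$, and it remains a perfect obstruction theory because the descent is along a smooth quotient so compatible with cotangent complexes. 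Its obstruction sheaf is the descent of the pullback of $\Ob_{X\lalp}$.

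Next, for condition (1), I would construct the gluing isomorphisms $\bar\psi\lab$ by descending the $T$-equivariant isomorphisms $\psi\lab$: their pullbacks to $(X\lab\times\bP^1)$ minus the appropriate loci are $T\times\bC^\ast$-equivariant and thus descend to $T$-equivariant isomorphisms $\bar\psi\lab\colon \Ob_{\mM\lalp}|_{\mM\lab}\to \Ob_{\mM\lbet}|_{\mM\lab}$. The cocycle identity for $\bar\psi\lab$ follows from that of $\psi\lab$ by functoriality of pullback and descent. For condition (2), I would take the $T$-equivariant étale cover $\{V_\lambda\to X\lab\}$ provided by the APOT $\phi$ and produce a $T$-equivariant étale cover of $\mM\lab$ by setting $\bar V_\lambda = [V_\lambda\times\bP^1 - (\Sigma_-\cap V_\lambda)\times\{0\} - (\Sigma_+\cap V_\lambda)\times\{\infty\}\,/\,\bC^\ast]$, and then descend the quasi-isomorphisms $\eta_{\alpha\beta\lambda}$ to $\bar\eta_{\alpha\beta\lambda}\colon \bar E\lalp|_{\bar V_\lambda}\to \bar E\lbet|_{\bar V_\lambda}$ in $D([\bar V_\lambda/T])$; the commutative diagram \eqref{loc 4.1} for $\bar\eta$ is the $\bC^\ast$-descent of the corresponding diagram for $\eta$, using that $\bL_{X\lalp}|_{V_\lambda}\to \bL_{V_\lambda}$ pulls back and descends to $\bL_{\mM\lalp}|_{\bar V_\lambda}\to \bL_{\bar V_\lambda}$.

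The main obstacle, though mostly bookkeeping, is verifying cleanly that $\bC^\ast$-descent of $T$-equivariant perfect obstruction theories preserves all the required structure, in particular that the absolute truncated cotangent complex $\bL_{\mM\lalp}$ is correctly identified with the descent of $\bL_{X\lalp\times\bP^1-\cdots}$ modulo the tangent direction of the $\bC^\ast$-action, and that the induced morphism $\bar\phi\lalp$ remains surjective on $h^{-1}$ and an isomorphism on $h^0$ after this identification. Once this standard fact (used in similar master-space constructions such as \cite{KiemLi, ChangKiemLi}) is invoked, the conditions of Definition~\ref{equivariant APOT} follow formally, finishing the proposition.
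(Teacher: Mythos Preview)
Your proposal is correct and follows essentially the same approach as the paper. The paper's own proof is a single sentence declaring the verification ``straightforward, similar to the arguments given for checking the axioms of almost perfect obstruction theories in \S\ref{Torus localization section}''; your write-up simply spells out what that verification entails, namely checking the two conditions of Definition~\ref{equivariant APOT} by pulling back the data of $\phi$ to $X\lalp\times\bP^1$ and descending along the $\bC^\ast$-quotient, and you correctly flag the only genuine technical point (compatibility of the descent with truncated cotangent complexes) while deferring it to the standard master-space references.
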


\begin{proof}
The proof is straightforward, similar to the arguments given for checking the axioms of almost perfect obstruction theories in \S \ref{Torus localization section}.
\end{proof}

Applying the virtual torus localization formula then yields the following theorem.

\begin{thm} \label{theorem 7.2}
With the above notation and conditions, we have
\begin{align*}
    [\oO_{M_+}\virt] - [\oO_{M_-}\virt] = \mathrm{res}_{t=1}  \frac{[\oO_F\virt]}{e(N\virt)} \in K_0^T(\mM).
\end{align*}
\end{thm}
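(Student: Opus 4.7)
The plan is to apply the virtual torus localization formula of Theorem~\ref{46} to the master space $\mM$ equipped with its $T$-equivariant almost perfect obstruction theory $\bar\phi$, and then extract the wall crossing identity by taking residues at $t=1$. Theorem~\ref{46} applies because the $T$-fixed locus of $\mM$ is $M_+ \sqcup F \sqcup M_-$ and Assumption~\ref{56} holds for $\bar\phi$, as I will verify in the next step.

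First I would identify the restriction of $\bar\phi$ to each of the three $T$-fixed components. By construction $\bar\phi\lalp$ descends from the $\bC^*$-equivariant pullback of $\phi\lalp$ along $X\lalp \times \bP^1 \to X\lalp$, so its restriction to $M_\pm \subset \mM$ is the descent of $\phi|_{X_\pm}$ and produces $[\oO_{M_\pm}^{\mathrm{vir}}]$, while its restriction to $F \subset \mM$ recovers $\phi^F$ and produces $[\oO_F^{\mathrm{vir}}]$. Next I would compute the three virtual normal bundles inside $\mM$. The components $M_\pm$ arise from the two $\bC^*$-fixed points of $\bP^1$, so their virtual normal bundles are trivial line bundles of $T$-weights $\pm 1$ coming from the tangent directions of $\bP^1$ at $0$ and $\infty$. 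The component $F$ sits inside the generic fiber locus $[X \times (\bP^1 - \{0,\infty\})/\bC^*] \cong X$, so its virtual normal bundle in $\mM$ coincides with the original $N^{\mathrm{vir}}$; this also gives Assumption~\ref{56} for $\bar\phi$.

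Plugging these data into Theorem~\ref{46} yields an identity in $K_0^T(\mM) \otimes_{\bQ[t,t^{-1}]} \bQ(t)$
$$[\oO_\mM^{\mathrm{vir}}] \ = \ (\iota_+)_* \frac{[\oO_{M_+}^{\mathrm{vir}}]}{1 - t^{-1}} \ + \ (\iota_F)_* \frac{[\oO_F^{\mathrm{vir}}]}{e(N^{\mathrm{vir}})} \ + \ (\iota_-)_* \frac{[\oO_{M_-}^{\mathrm{vir}}]}{1 - t}.$$
The left hand side belongs to $K_0^T(\mM)$, hence has no pole at $t=1$, so taking $\mathrm{res}_{t=1}$ of both sides kills it. Using $\mathrm{res}_{t=1}[1/(1-t^{-1})] = 1$ and $\mathrm{res}_{t=1}[1/(1-t)] = -1$, the residues of the $M_\pm$ contributions become $\pm [\oO_{M_\pm}^{\mathrm{vir}}]$, and rearranging yields the claimed formula.

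The main obstacle is the careful computation of the virtual normal bundles at the three fixed components, in particular the correct identification of the $T$-weights of the trivial line bundles at $M_\pm$ (a sign error would reverse the wall crossing). A secondary subtlety is the verification that $\bar\phi$ really is a $T$-equivariant almost perfect obstruction theory with the claimed restrictions and $N^{\mathrm{vir}}_{F \subset \mM} = N^{\mathrm{vir}}$; this amounts to bookkeeping with the $\bC^*$-equivariant descent of the perfect obstruction theories $\phi\lalp$ pulled back to $X\lalp \times \bP^1$ together with the compatibility data $\psi\lab$ and $\eta_{\alpha\beta\lambda}$ from Definition~\ref{equivariant APOT}, carried out compatibly with the excision of $\Sigma_\pm \times \{0,\infty\}$.
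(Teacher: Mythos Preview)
Your approach is essentially identical to the paper's: apply Theorem~\ref{46} to the master space $\mM$, identify the three $T$-fixed components $M_+\sqcup F\sqcup M_-$ together with their virtual normal bundles, observe that $[\oO_\mM\virt]$ lies in $K_0^T(\mM)$ and hence has vanishing residue at $t=1$, and read off the wall crossing identity from the residues of the three summands.

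One concrete point deserves a flag. In the paper's display the denominators are assigned as $[\oO_{M_+}\virt]/(1-t)$ and $[\oO_{M_-}\virt]/(1-t^{-1})$, the opposite of what you wrote; with your assignment and your (correct) residue values $\mathrm{res}_{t=1}(1/(1-t^{-1}))=1$, $\mathrm{res}_{t=1}(1/(1-t))=-1$, rearranging actually yields $[\oO_{M_+}\virt]-[\oO_{M_-}\virt]=-\mathrm{res}_{t=1}\bigl([\oO_F\virt]/e(N\virt)\bigr)$, i.e.\ the formula with the wrong sign. This is exactly the obstacle you anticipated, and it comes down to pinning down which of $0,\infty\in\bP^1$ carries $M_+$ versus $M_-$ after removing $\Sigma_\mp\times\{0\}$ and $\Sigma_\pm\times\{\infty\}$, and then computing the induced $T$-weight on the normal direction in $\mM$ (not merely the $\bC^\ast$-weight on $T_{\bP^1}$). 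Once that bookkeeping is done consistently, your argument and the paper's coincide.
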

\begin{proof}
By Theorem \ref{46}, we have
\begin{align} \label{loc 6.2}
    [\oO_\mM\virt] = \frac{[\oO_{M_+}\virt]}{1-t} + \frac{[\oO_{M_-}\virt]}{1-t^{-1}} + \frac{[\oO_F\virt]}{e(N\virt)}
\end{align}
since, by construction, the normal bundle of $M_+$ is trivial with $T$-weight $1$ and the normal bundle of $M_-$ is trivial with $T$-weight $-1$.

Since $[\oO_\mM\virt] \in K_0^T(\mM)$, it has zero residue at $t=1$. Therefore, taking residues at $t=1$, the left hand side of \eqref{loc 6.2} vanishes and we get that
\begin{align*}
    -[\oO_{M_+}\virt] + [\oO_{M_-}\virt] + \mathrm{res}_{t=1}  \frac{[\oO_F\virt]}{e(N\virt)}
\end{align*}
is zero, which is what we want.
\end{proof}

\medskip \subsection{Dual obstruction cone} We first recall the definition of the Jiang-Thomas dual obstruction cone \cite{JiangThomas}.

Let $X$ be a Deligne-Mumford stack equipped with a perfect obstruction theory $\phi \colon E \to \bL_X$ with obstruction sheaf $\fF = Ob_X =h^1(E^\vee)$.

\begin{defi}
The \emph{dual obstruction cone} of $X$ is defined by
\begin{align*}
    N = \Spec_X (\Sym \fF) \xrightarrow{\pi} X
\end{align*}
which is the functor that assigns to every morphism $\rho \colon U \to X$ the set $\Hom_U(\rho^* \fF, \oO_U)$.
\end{defi}

By standard perfect obstruction theory arguments, we can find an \'{e}tale cover $\lbrace X\lalp \to X \rbrace$, a smooth affine scheme $A\lalp$, a vector bundle $\vV\lalp$ on $A\lalp$ and a section $s\lalp \in H^0(A\lalp, \vV\lalp)$ such that $X\lalp$ is the zero locus of $s\lalp$ and the perfect obstruction theory on $X\lalp$ is given by the two-term complex
\begin{align} \label{local isomorphism induced from POT}
    E\lalp = E|_{X\lalp} \simeq [\vV\lalp^\vee|_{X\lalp} \xrightarrow{ds\lalp^\vee} \Omega_{A\lalp}|_{X\lalp} ]
\end{align}
together with the natural map to $\bL_{X\lalp} \simeq [I\lalp / I\lalp^2 \to \Omega_{A\lalp}|_{X\lalp}]$, where $I\lalp$ is the ideal sheaf of $X\lalp$ in $A\lalp$.

Let $N\lalp = N|_{X\lalp}$ and write $\pi\lalp \colon \vV\lalp^\vee |_{X\lalp} \to {X\lalp}$ for the projection. By definition, $N\lalp$ is the closed subscheme of $\vV\lalp^\vee|_{X\lalp}$ defined by the vanishing of the section
\begin{align*}
    ds\lalp^\vee \colon \vV\lalp^\vee |_{X\lalp} \lr \pi\lalp^* \Omega_{A\lalp} |_{X\lalp}.
\end{align*}

Let $x_1, \ldots , x_n$ be \'{e}tale coordinates on $A\lalp$ and $y_1,  \ldots , y_r$ coordinates on the fibers of the bundle $\vV\lalp^\vee$. Then $N\lalp$ is cut out by the equations
\begin{align} \label{loc 7.5}
    \lbrace s_i \rbrace_{1 \leq i \leq r}, \ \lbrace \sum_i y_i \frac{\partial s_i}{\partial x_j} \rbrace_{1 \leq j \leq n}
\end{align}
where $s_i = s_i(x_1, \cdots, x_n)$ are the coordinate functions of the section $s\lalp$.

Let $\ti{s}\lalp \colon \vV\lalp^\vee \to \bC$ be the function defined by the formula
\begin{align*}
    \ti{s}\lalp = \sum_i y_i s_i.
\end{align*}
The differential of $\ti{s}\lalp$ is then
\begin{align} \label{loc 7.7}
    d \ti{s}\lalp = \sum_i s_i dy_i + \sum_j \left( \sum_i y_i \frac{\partial s_i}{\partial x_j} \right) dx_j
\end{align}

Comparing \eqref{loc 7.7} with \eqref{loc 7.5} we see that $N\lalp$ is the d-critical locus of the function $\ti{s}\lalp$. It therefore admits a symmetric perfect obstruction theory (cf. \cite{BehFun}) $\psi \lalp \colon F\lalp \to \bL_{N\lalp}$ with
\begin{align} \label{local obstruction theory of Nlalp}
    F\lalp = [T_{\vV\lalp^\vee}|_{N\lalp} \xrightarrow{d(d\ti{s}\lalp)^\vee} \Omega_{\vV\lalp^\vee}|_{N\lalp}]
\end{align}
and obstruction sheaf $\Ob_{N\lalp}=h^1(F\lalp^\vee) = \Omega_{N\lalp}$.

\begin{thm} \label{theorem 7.4}
The \'{e}tale cover $\lbrace N\lalp \to N \rbrace$ and the (symmetric) perfect obstruction theories $\psi\lalp \colon F\lalp \to \bL_{N\lalp}$ form an almost perfect obstruction theory $\psi$ on $N$ with obstruction sheaf $\Ob_N= \Omega_N$.
\end{thm}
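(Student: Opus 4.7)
The plan is to verify the two conditions of Definition~\ref{APOT} for the collection $\{\psi\lalp\}$ in turn, leveraging the perfect obstruction theory structure of $\phi$ on $X$ to supply the descent and compatibility data for $\psi$ on $N$.

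For condition (1), since each $\psi\lalp$ is symmetric we have $\Ob_{N\lalp} = h^1(F\lalp^\vee) = \Omega_{N\lalp}$. The morphisms $\rho\lalp \colon N\lalp \to N$ are étale as base changes of $X\lalp \to X$ along $\pi \colon N \to X$, so $\Omega_{N\lalp} = \rho\lalp^* \Omega_N$, and the canonical isomorphisms from étale base change furnish $\psi\lab \colon \Omega_{N\lalp}|_{N\lab} \to \Omega_{N\lbet}|_{N\lab}$ satisfying the cocycle condition and gluing to $\Omega_N$.

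For condition (2), I would pull back the étale cover $\{V_\lambda \to X\lab\}_{\lambda \in \Gamma}$ from the APOT $\phi$ via $\pi$ to obtain an étale cover $\{W_\lambda := V_\lambda \times_X N \to N\lab\}$. On $V_\lambda$ we have $\eta_{\alpha\beta\lambda} \colon E\lalp|_{V_\lambda} \to E\lbet|_{V_\lambda}$ compatible with $\psi\lab^{-1}$ on $h^1$. After further étale refinement I may assume that both $\phi\lalp|_{V_\lambda}$ and $\phi\lbet|_{V_\lambda}$ admit Kuranishi-type presentations as in~\eqref{local isomorphism induced from POT} with data $(A\lalp, \vV\lalp, s\lalp)$ and $(A\lbet, \vV\lbet, s\lbet)$. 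The core step is to use $\eta_{\alpha\beta\lambda}$ to produce, possibly after adding trivial summands to both $\vV\lalp$ and $\vV\lbet$, an isomorphism of stabilized ambient bundles $\vV\lalp^\vee|_{V_\lambda} \simeq \vV\lbet^\vee|_{V_\lambda}$ that intertwines the regular functions $\ti s\lalp$ and $\ti s\lbet$. Applying $d(d\ti s)^\vee$ then yields the required isomorphism $\eta'_{\alpha\beta\lambda} \colon F\lalp|_{W_\lambda} \to F\lbet|_{W_\lambda}$ between the two-term complexes~\eqref{local obstruction theory of Nlalp} in $D(W_\lambda)$, and the commutative diagram~\eqref{compatibility data APOT} for $\psi$ follows by pulling back the corresponding diagram for $\phi$; the relation $h^1((\eta'_{\alpha\beta\lambda})^\vee) = \psi\lab^{-1}|_{W_\lambda}$ is inherited from the descent of $\Omega_N$ verified in step one.

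The main obstacle is the stabilization step: the bundles $\vV\lalp$ and $\vV\lbet$ may have different ranks, and $\eta_{\alpha\beta\lambda}$ does not a priori preserve the Kuranishi presentations. The fact that $(\vV\lalp^\vee, \ti s\lalp)$ exhibits $N\lalp$ as a d-critical chart (and similarly for $\beta$) is what provides the needed flexibility: by standard stabilization results for d-critical charts, after further étale refinement of $W_\lambda$ and the addition of non-degenerate quadratic summands to $\vV\lalp$ and $\vV\lbet$, the two local presentations become isomorphic via an identification intertwining $\ti s\lalp$ and $\ti s\lbet$. The symmetry of $F\lalp$ and $F\lbet$ ensures that the quadratic summands contribute trivially in $D(W_\lambda)$, so no essential information is lost when passing to the corresponding obstruction theories. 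Once this compatibility is set up, checking the remaining diagrammatic identities reduces to naturality of the cotangent complex and the compatibilities already built into $\phi$.
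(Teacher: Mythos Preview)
Your proposal is plausible but takes a substantially different and more laborious route than the paper. The key point you are missing is that here $\phi \colon E \to \bL_X$ is a \emph{global} perfect obstruction theory, not merely an almost perfect one. The paper exploits this by writing, for each $\alpha$, an exact triangle
\[
\pi^* E|_{N\lalp} \lra F\lalp \lra \pi^* E^\vee[1]|_{N\lalp} \lra
\]
(coming directly from the two-step filtration of $T_{\vV\lalp^\vee}|_{N\lalp}$ and $\Omega_{\vV\lalp^\vee}|_{N\lalp}$) together with a commutative diagram mapping to the transitivity triangle for $\bL_{N\lalp} \to \bL_{N\lalp/X\lalp}$. The outer terms $\pi^*E$, $\pi^*E^\vee[1]$ and the outer vertical arrows $\pi^*\phi$ and $\theta \colon \pi^*E^\vee[1] \to \bL_{N/X}$ are restrictions of \emph{global} objects and morphisms on $N$. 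Hence on $N\lab$ both $F\lalp|_{N\lab}$ and $F\lbet|_{N\lab}$ sit in the same triangle with the same outer maps, yielding the required isomorphisms $\eta\lab$ with no need for Kuranishi-model comparison or d-critical stabilization.

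Your approach via d-critical stabilization could in principle be pushed through, but two points require care. First, your reference to ``the APOT $\phi$'' and its data $\{V_\lambda, \eta_{\alpha\beta\lambda}\}$ is misplaced: since $\phi$ is global and $E\lalp = E|_{X\lalp}$, those compatibilities are just identities and provide no leverage; the genuine issue is that the Kuranishi presentations $(A\lalp,\vV\lalp,s\lalp)$ are non-canonical. Second, d-critical stabilization isomorphisms are not unique, so asserting that $h^1((\eta')^\vee)$ equals the canonical descent isomorphism for $\Omega_N$ requires an argument---you would need to check that your $\eta'$ can be chosen to intertwine the symmetries $F\lalp \simeq F\lalp^\vee[1]$, so that the induced map on $h^1$ of duals is forced by $h^0(\psi\lalp)$, $h^0(\psi\lbet)$ to be the identity on $\Omega$. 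The paper's triangle argument sidesteps both issues entirely.
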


\begin{proof}
Let $\pi \colon N \to X$ denote the projection.
For any index $\alpha$, we have a commutative diagram
\begin{align} \label{loc 7.7a}
    \xymatrix{
    \pi^* E\lalp \ar@{=}[r] \ar[d] & [\pi^* \vV\lalp^\vee |_{X\lalp} \ar[d] \ar[r]^-{ds\lalp^\vee} & \pi^* \Omega_{A\lalp} |_{X\lalp}] \ar[d]\\
    F\lalp \ar[d] \ar@{=}[r] & [T_{\vV\lalp^\vee}|_{N\lalp} \ar[d] \ar[r]^-{d(d\ti{s}\lalp)^\vee} & \Omega_{\vV\lalp^\vee}|_{N\lalp}] \ar[d] \\
    \pi^* E\lalp^\vee[1] \ar@{=}[r] & [ \pi^* T_{A\lalp} |_{X\lalp} \ar[r]^-{(ds\lalp^\vee)^\vee} & \pi^* \vV\lalp |_{X\lalp}]
    }
\end{align}
giving an exact triangle $\pi^* E\lalp \to F\lalp \to \pi^* E\lalp^\vee[1]$ fitting in a commutative diagram
\begin{align} \label{loc 7.10}
    \xymatrix{
    \pi^* E\lalp \ar[r] \ar[d]_-{\pi^* \phi\lalp} & F\lalp \ar[d]_-{\psi \lalp} \ar[r]  & \pi^* E\lalp^\vee[1] \ar[r] \ar[d]^-{\theta\lalp} & \\
    \pi^* \bL_{X\lalp} \ar[r] & \bL_{N\lalp} \ar[r] & \bL_{N\lalp / X\lalp} \ar[r] &
    }
\end{align}
where $\pi^* \phi\lalp$ and $\theta\lalp$ are restrictions to $N\lalp$ of global arrows $\pi^* \phi \colon \pi^* E \to \pi^* \bL_X$ and $\theta \colon \pi^* E^\vee [1] \to \pi^* \fF \to \bL_{N / X}$ using the isomorphisms \eqref{local isomorphism induced from POT} and the definition of the dual obstruction cone $N$.
Since $\Ob_{N\lalp} = \Omega_{N\lalp} = \Omega_N |_{N\lalp}$ the obstruction sheaves glue to an obstruction sheaf $\Ob_N = \Omega_N$.

Let $X\lab =X\lalp \times_X X\lbet$ and $N\lab = N\lalp \times_N N\lbet = N \times_X X\lab$. Then \eqref{loc 7.10} together with the above discussion shows that there exist quasi-isomorphisms
\begin{align*}
    \eta\lab \colon F\lalp |_{N\lab} \lr F\lbet |_{N\lab}
\end{align*}
which are compatible with the morphisms $\psi\lalp|_{N\lab}$ and $\psi\lbet|_{N\lab}$ and respect the symmetry of the obstruction theories $F\lalp$ and $F\lbet$, thus inducing the gluing morphisms for the obstruction sheaf $\Omega_N$.
In particular, the axioms of an almost perfect obstruction theory are all satisfied.
\end{proof}

\begin{rmk}
The almost perfect obstruction theory of $N$ is symmetric in the sense of \cite{BehFun} with respect to the natural generalization of the definition in our context.
\end{rmk}

From now on, we assume that the perfect obstruction theory $E$ has a global resolution $E = [ E^{-1} \to E^0 ]$ where $E^{-1}, E^0$ are locally free sheaves on $X$.

The grading on $\Sym \fF$ determines a $T=\bC^*$-action on $N$, scaling the fibers of $N$ over $X$, whose fixed locus is precisely $X$.
Differentiating the $T$-action, we obtain the Euler vector field whose dual is a cosection
\begin{align} \label{loc 7.9}
    \sigma \colon \Omega_N \lr \oO_N
\end{align}
and whose vanishing locus is $X$ by \cite[Section~3]{JiangThomas}.

By construction, the almost perfect obstruction theory $\psi$ of Theorem~\ref{theorem 7.4} is $T$-equivariant and has obstruction sheaf $\Ob_N = \Omega_N$.
By \eqref{loc 7.7a}, the virtual normal bundle of $X$ inside $N$ is $E^\vee$ which by assumption admits the global resolution $[E_0 \to E_1]$ where as usual $E_i = (E^{-i})^\vee$ for $i=0,1$. Therefore Assumption \ref{56} holds and we may apply Theorem \ref{46} and Definition \ref{42} to obtain the following theorem, keeping in mind that the cosection $\sigma$ vanishes on the $T$-fixed locus $X$.

\begin{thm} \label{thm 7.6}
Let $X$ be a Deligne-Mumford stack with a perfect obstruction theory $\phi \colon E \to \bL_X$ with obstruction sheaf $\fF = \Ob_X$ such that $E$ admits a global resolution by locally free sheaves. Let $\psi$ denote the induced $T$-equivariant almost perfect obstruction theory of the dual obstruction cone $N = \Spec_X (\Sym F)$ with obstruction sheaf $\Ob_N = \Omega_N$ and cosection $\sigma$ as in \eqref{loc 7.9}.
Let $\iota \colon X \to N$ be the inclusion as the zero section.
Then the cosection localized virtual structure sheaf $[\oO_{N,\loc}\virt] \in K_0(X)$ and the virtual structure sheaves $[\oO_N\virt] \in K_0(N)$ and $[\oO_X\virt] \in K_0(X)$ are related by
\begin{align*}
    [\oO_N\virt] & = \iota_* [\oO_{N,\loc}\virt] \in K_0(N),\\
    [\oO_N\virt] & = \iota_* \frac{[\oO_X\virt]}{e(E^\vee)} \in K_0^T(N) \otimes_{\bQ[t,t^{-1}]} \bQ(t).
\end{align*}

In particular, when $X$ is proper, we may define $K$-theoretic invariants by taking regular and equivariant Euler characteristics respectively to obtain
\begin{align*}
    \chi \left( [\oO_{N,\loc}\virt] \right) \in \bQ, \quad \chi_t \left( \frac{[\oO_X\virt]}{e(E^\vee)}\right)   \in \bQ(t).
\end{align*}
\end{thm}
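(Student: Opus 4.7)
The plan is to deduce both identities from the general theorems proved earlier in the paper, applied to the almost perfect obstruction theory $\psi$ on $N$ constructed in Theorem~\ref{theorem 7.4}, together with the Euler cosection $\sigma$ of \eqref{loc 7.9}. The first identity will come directly from the comparison Proposition~\ref{43}, while the second will come from the virtual torus localization formula of Theorem~\ref{46}. The Euler-characteristic invariants are then immediate consequences.

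For the first identity, recall from \cite[\S 3]{JiangThomas} that the vanishing locus of $\sigma$ is exactly the zero section $\iota \colon X \hookrightarrow N$. Definition~\ref{42} then places $[\oO_{N,\loc}\virt]$ in $K_0(X)$, and Proposition~\ref{43} directly gives $\iota_*[\oO_{N,\loc}\virt] = [\oO_N\virt]$ in $K_0(N)$.

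For the second identity, I apply Theorem~\ref{46} to $N$ equipped with its scaling $T$-action. The fixed locus is the zero section $X$, since $T$ acts with weight one on each graded piece of $\Sym \fF$. Two further ingredients are then needed. First, the induced almost perfect obstruction theory $\psi^X$ on $X$, given by Proposition~\ref{induced APOT on F}, must be identified with the original perfect obstruction theory $\phi$; this is a local computation using the exact triangle $\pi^* E\lalp \to F\lalp \to \pi^* E\lalp^\vee[1]$ extracted from \eqref{loc 7.7a}: restricting to $X\lalp \subset N\lalp$ and decomposing by $T$-weight, the fixed part of $F\lalp|_{X\lalp}$ is $\pi^* E\lalp|_{X\lalp} = E\lalp$, and the left column of \eqref{loc 7.10} shows that the induced composition with $\bL_{X\lalp}$ recovers $\phi\lalp$. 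Second, Assumption~\ref{56} is verified with $N\virt = E^\vee$: by hypothesis $E$ admits a global two-term locally free resolution $[E^{-1}\to E^0]$, so $E^\vee = [E_0 \to E_1]$ with $E_i = (E^{-i})^\vee$ supplies the required two-term locally free resolution of the virtual normal bundle, matching $h^1(E^\vee) = \Ob_X$ with the moving part $\Omega_N|_X^{mv}$ of the obstruction sheaf of $\psi$. Theorem~\ref{46} then delivers $[\oO_N\virt] = \iota_*[\oO_X\virt]/e(E^\vee)$ in $K_0^T(N) \otimes_{\bQ[t,t^{-1}]}\bQ(t)$, with $e(E^\vee) = e(E_0)/e(E_1)$.

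For the numerical invariants, both $[\oO_{N,\loc}\virt]$ and $[\oO_X\virt]/e(E^\vee)$ are classes on the proper stack $X$, so the ordinary and $T$-equivariant Euler characteristics are well defined and produce the claimed elements of $\bQ$ and $\bQ(t)$. The main obstacle lies in the identification of $\psi^X$ with $\phi$ in the second step: one must trace the $T$-equivariant structure of the local POTs $F\lalp$ built in the proof of Theorem~\ref{theorem 7.4} and verify that the gluing compatibilities of the fixed part restrict to the compatibilities of $\phi$ already present on $X$. Once the $T$-weight splitting of \eqref{loc 7.7a} is pinned down locally, this reduces to diagram chasing using the compatibility data $\tilde\eta_{\alpha\beta\lambda}$ already produced in the proof of Theorem~\ref{theorem 7.4}.
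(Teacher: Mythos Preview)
Your proposal is correct and follows essentially the same route as the paper. The paper's argument is extremely terse—just the paragraph preceding the theorem—asserting that $\psi$ is $T$-equivariant, that the virtual normal bundle of $X$ in $N$ is $E^\vee$ by \eqref{loc 7.7a}, that Assumption~\ref{56} therefore holds via the global resolution $[E_0\to E_1]$, and then invoking Theorem~\ref{46} and Definition~\ref{42} (together with the fact that $\sigma$ vanishes exactly on $X$). Your write-up unpacks the same logic, spelling out in addition why the induced almost perfect obstruction theory $\psi^X$ on the fixed locus agrees with the original $\phi$; the paper leaves this implicit in its reference to \eqref{loc 7.7a}.
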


\appendix

\section{Deformation Invariance of Cosection Localized Virtual Structure Sheaf} \label{appendix}

Let $\phi$ be an almost perfect obstruction theory on $X \to S$, given by perfect obstruction theories $\phi\lalp \colon E\lalp \to \bL_{X\lalp / S}$ on an \'{e}tale cover $\lbrace X\lalp \to X \rbrace_{\alpha \in A}$ of $X$. Let $\sigma \colon \Ob_X \to \oO_X$ be a cosection. We follow the notation of \S\ref{Scos} throughout.

Suppose that we have a Cartesian diagram
\begin{align} \label{loc A.1}
\xymatrix{
    Y \ar[r]^-{u} \ar[d] & X \ar[d] \\
    Z \ar[r]_-{v} & W
    }
\end{align} 
where $Z,W$ are smooth varieties and $v$ is a regular embedding, so that we also have cartesian diagrams
\begin{align} \label{loc A.2}
    \xymatrix{
    Y\lalp \ar[r]^-{u\lalp} \ar[d] & X\lalp \ar[d] \\
    Y \ar[r]_-{u} & X.
    }
\end{align}

Suppose now that we have an almost perfect obstruction theory on $Y \to S$ given by perfect obstruction theories $\phi'\lalp \colon E\lalp' \to \bL_{Y\lalp/S}$ together with commutative diagrams
\begin{align} \label{compat obs th}
\xymatrix{
    E\lalp|_{Y\lalp} \ar[r]^-{g\lalp} \ar[d]_-{\phi\lalp|_{V\lalp}} & E\lalp' \ar[d]^-{\phi\lalp'} \ar[r] & N_{Z/W}^\vee |_{Y\lalp}[1] \ar@{=}[d] \ar[r] &\\
    \bL_{X\lalp/S}|_{Y\lalp} \ar[r] & \bL_{Y\lalp / S} \ar[r] & \bL_{Y\lalp/X\lalp} \ar[r] &
    }
\end{align}
of distinguished triangles which are compatible with the diagrams \eqref{compatibility data APOT} for $\phi$ and $\phi'$ such that we have exact sequences 
\begin{align} \label{loc A.4}
    N_{Z/W}|_{Y\lalp} \lr \Ob_{Y\lalp} \xrightarrow{h^1(g\lalp^\vee)} \Ob_{X\lalp}|_{Y\lalp} \lr 0.
\end{align}
that glue to a sequence 
\begin{align} \label{loc A.5}
    N_{Z/W}|_{Y} \lr \Ob_{Y} \lr \Ob_{X}|_{Y} \lr 0.
\end{align}

We obtain an induced cosection 
\begin{align*}
    \sigma' \colon \Ob_{Y} \lr \Ob_X|_Y \xrightarrow{\sigma|_Y} \oO_Y
\end{align*}
The Cartesian squares~\eqref{loc A.1} and \eqref{loc A.2} give rise to a diagram
\begin{align*}
    \xymatrix{
    Y\lalp(\sigma') \ar[r]^-{t\lalp} \ar[d] & X\lalp(\sigma) \ar[d] \\
    Y(\sigma') \ar[r]^-{t} \ar[d] & X(\sigma) \ar[d] \\
    Z \ar[r]_-{v} & W
    }
\end{align*}
with Cartesian squares.
\begin{thm}
$[\oO_{Y, \loc}\virt] = v^! [\oO_{X, \loc}\virt] \in K_0(Y(\sigma'))$.
\end{thm}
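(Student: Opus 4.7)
The plan is to adapt the deformation-to-the-normal-bundle technique of Manolache to the cosection localized setting with almost perfect obstruction theories, following the strategy of \cite{KiemLiCosection, KiemLiKTheory} and Lemma~\ref{lemma 4.6}. The idea is to construct a family over $\bP^1$ with generic fiber $X$ and special fiber the normal bundle $N := N_{Z/W}|_Y$, equip it with an extended almost perfect obstruction theory and a compatible extended cosection, form the resulting cosection localized virtual structure sheaf on the total space, and then specialize at the two ends of $\bP^1$ to recover $v^! [\oO_{X,\loc}\virt]$ on one side and $[\oO_{Y,\loc}\virt]$ on the other.

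First I would take $\mM := M_{Y/X}^\circ \to \bP^1$, the deformation of $Y$ inside $X$ to its normal cone. Because $v$ is a regular embedding of smooth varieties and $Y = X \times_W Z$, the cone $C_{Y/X}$ is the vector bundle $N$, so $\mM_0 = N$ and $\mM \setminus N \cong X \times \bA^1$. The pulled back cover $\lbrace \mM_\alpha := X_\alpha \times_X \mM \to \mM \rbrace$ is \'etale, with $\mM_\alpha \cap N = N_\alpha := N_{Z/W}|_{Y_\alpha}$. On each $\mM_\alpha$, the distinguished triangle \eqref{compat obs th} supplies a perfect obstruction theory $\Phi_\alpha \colon F_\alpha \to \bL_{\mM_\alpha / S \times \bP^1}$ that restricts to $\phi_\alpha$ over the open part and to the pullback along $p_N \colon N_\alpha \to Y_\alpha$ of $\phi'_\alpha$ over $N_\alpha$. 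The gluing data $\psi_{\alpha\beta}$ and $\eta_{\alpha\beta\lambda}$ extend via the compatibility of \eqref{compatibility data APOT} with \eqref{compat obs th}, producing an almost perfect obstruction theory $\Phi$ on $\mM \to S \times \bP^1$ whose obstruction sheaf $\Ob_\mM$ satisfies $\Ob_\mM |_{X \times \bA^1}$ pulled back from $\Ob_X$ and $\Ob_\mM |_N = p_N^\ast \Ob_Y$, the second identification using the exact sequence \eqref{loc A.5}. Pulling back $\sigma$ on the open part and using that $\sigma'$ factors through $\Ob_X |_Y$ via \eqref{loc A.5}, these glue to a cosection $\bar\sigma \colon \Ob_\mM \to \oO_\mM$ whose vanishing locus $\mM(\bar\sigma)$ satisfies $\mM(\bar\sigma) \cap N = p_N^{-1}(Y(\sigma'))$ and $\mM(\bar\sigma) \setminus N = X(\sigma) \times \bA^1$.

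Next I would verify that the coarse intrinsic normal cone $\fc_{\mM / S \times \bP^1}$ has reduced support in $\Ob_\mM(\bar\sigma)$, which is a direct adaptation of \cite[Proposition~4.3]{KiemLiCosection} using that the property holds on the generic and special fibers. Definition~\ref{42} then produces a class $[\oO_{\mM,\loc}\virt] \in K_0(\mM(\bar\sigma))$. Restricting to the fibers over $1, 0 \in \bP^1$ and using Lemma~\ref{4} to commute flat pullback with the Gysin maps, one recovers $[\oO_{X,\loc}\virt]$ at $t = 1$ and, at $t = 0$, a class on $N(\bar\sigma|_N)$ whose image under $0^!_N$ equals $[\oO_{Y,\loc}\virt]$ (using $\Ob_\mM|_N = p_N^\ast \Ob_Y$ and $\bar\sigma|_N = p_N^\ast \sigma'$). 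On the other hand, $v^! [\oO_{X,\loc}\virt]$ is, by definition, the composition of specialization to $C_{Y/X} = N$ with $0^!_N$, and the same family $\mM$ computes this specialization. Equating the two ends of the family then yields the claimed equality in $K_0(Y(\sigma'))$.

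The hard part will be constructing the extended almost perfect obstruction theory $\Phi$ on $\mM$ with the correct restrictions on both fibers and correctly glued across all \'etale charts; this requires carefully combining the compatibility data in \eqref{compat obs th}, \eqref{compatibility data APOT}, and \eqref{loc A.5} at the level of derived categories, in essentially the same way that the double deformation space was built in the proof of Lemma~\ref{lemma 4.6}. Once $\Phi$ and $\bar\sigma$ are in place, the cone reduction step and the specialization in the family follow the pattern already established in Lemma~\ref{lemma 4.6} and in the proof of Theorem~\ref{55}, so no new conceptual input is required beyond bookkeeping with cosection supports.
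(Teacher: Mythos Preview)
Your approach diverges from the paper's, and it contains a genuine gap. The paper does \emph{not} put an almost perfect obstruction theory on the deformation space $M_{Y/X}^\circ$. Instead, following Kim--Kresch--Pantev (exactly as in Lemma~\ref{lemma 4.6}), it works on $Y\times\bP^1$: using the mapping-cone triangles of \eqref{compat obs th} with the twist $\kappa_\alpha=(z_0\cdot\id,z_1\cdot g_\alpha)$, it builds a sheaf stack $\kK$ on $Y\times\bP^1$ into which $\fn_{Y\times\bP^1/\mM_X^\circ}$ embeds, and identifies the fibres of $\kK$ as $\Ob_Y$ at $t=1$ and $\Ob_X|_Y\oplus N_{Z/W}|_Y$ at $t=0$. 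The double deformation space yields $[\oO_{\fc_Y}]=[\oO_{\fc_{Y/\cC_X}}]$ in $K_0(\kK)$, and applying the cosection localized Gysin maps at the two fibres gives $[\oO_{Y,\loc}\virt]=0_{\Ob_X|_Y\oplus N_{Z/W}|_Y,\sigma|_Y}^![\oO_{\fc_{Y/\cC_X}}]$; a separate commutation lemma (the paper's Proposition~A.2) then identifies the latter with $v^!0_{\Ob_X,\sigma}^![\oO_{\fc_X}]=v^![\oO_{X,\loc}\virt]$. Your reading of Lemma~\ref{lemma 4.6} as building an APOT on a deformation space is off: that lemma already works on $F\times\bP^1$ with a sheaf stack $\kK$, not with an obstruction theory on $\mM$.

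The specific gap in your construction is the claimed restriction of $\Phi_\alpha$ to the special fibre. The morphism $p_N^\ast\phi'_\alpha:p_N^\ast E'_\alpha\to p_N^\ast\bL_{Y_\alpha/S}\to\bL_{N_\alpha/S}$ is \emph{not} a perfect obstruction theory on $N_\alpha\to S$: on $h^0$ it gives the injection $p_N^\ast\Omega_{Y_\alpha/S}\hookrightarrow\Omega_{N_\alpha/S}$, whose cokernel $\Omega_{N_\alpha/Y_\alpha}\cong p_N^\ast N_{Z/W}^\vee|_{Y_\alpha}$ is nonzero, so the required isomorphism on $h^0$ fails. Consequently your assertion $\Ob_\mM|_N=p_N^\ast\Ob_Y$ is unfounded, and with it the construction of $\bar\sigma$. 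A secondary issue: you invoke Lemma~\ref{4} (flat pullback commutes with Gysin) to restrict $[\oO_{\mM,\loc}\virt]$ to fibres, but restriction to a fibre over a point of $\bP^1$ is not flat, so this step also needs a different argument. The paper's route avoids both problems by never asking for a POT on the total deformation space and by comparing cone classes inside a fixed sheaf stack over $Y\times\bP^1$.
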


Here the Gysin map $v^! \colon K_0(X(\sigma)) \to K_0(Y(\sigma'))$ is defined by the formula
\begin{align}
    v^! [\aA] = [\oO_Z^W |_{X(\sigma)} \otimes_{\oO_{X(\sigma)}} \aA] \in K_0(Y(\sigma'))
\end{align}
where we fix $\oO_Z^W$ to be a finite locally free resolution of $v_* \oO_Z$. By \cite{yplee}, $v^!$ also equals the composition
\begin{align}
   K_0(X(\sigma)) \xrightarrow{\sigma_u} K_0(C_{Y(\sigma')/X(\sigma)}) \xrightarrow{0_{N_{Z/W}}^!} K_0(Y(\sigma'))
\end{align}
where $\sigma_u$ is specialization to the normal cone and $0_{N_{Z/W}}^!$ is the Gysin map induced from the Cartesian diagram
\begin{align*}
\xymatrix{
    Y(\sigma') \ar[d] \ar[r] & C_{Y(\sigma')/X(\sigma)} \ar[d] \\
    Z \ar[r] & N_{Z/W}.
    }
\end{align*}

\begin{proof}[Proof of Theorem A.1]
Let $\mM_X^\circ \to \bP^1$ be the deformation of $X$ to its intrinsic normal cone stack $\cC_X$ and $\wW = \mM_{Y \times \bP^1 / \mM_X^\circ}^\circ$ be the double deformation space given by the deformation of $Y \times \bP^1$ inside $\mM_X^\circ$ to its normal cone $\cC_{Y \times \bP^1 / \mM_X^\circ}$. 

As in the proof of Lemma~\ref{lemma 4.6}, we obtain
\begin{align}
   [\oO_{\cC_Y}] =  [\oO_{\cC_{Y/\cC_X}}] \in K_0(\cC_{Y\times \bP^1 / \mM_X^\circ}).
\end{align}

Since $\cC_{Y \times \bP^1/\mM_X^\circ}$ is a closed substack of $\nN_{Y \times \bP^1 / \mM_X^\circ}$, the equality holds in $K_0(\nN_{Y \times \bP^1 / \mM_X^\circ})$ as well.

Following \cite{KimKreschPant}, for each index $\alpha$ we consider the commutative diagram of distinguished triangles on $Y\lalp \times \bP^1$
\begin{align} \label{loc A.9}
\xymatrix{
E\lalp|_{Y\lalp}(-1) \ar[r]^-{\kappa\lalp} \ar[d] & E\lalp|_{Y\lalp} \oplus E\lalp' \ar[r] \ar[d] & c(\kappa\lalp) \ar[r] \ar[d] &\\
\bL_{X\lalp/S} |_{Y\lalp}(-1) \ar[r]_-{\lambda\lalp} & \bL_{X\lalp/S} |_{Y\lalp} \oplus \bL_{Y\lalp/S} \ar[r] & c(\lambda\lalp) \ar[r] &
}
\end{align}
where $\kappa\lalp = (T \cdot \id, U \cdot g\lalp)$ with $T,U$ coordinates on $\bP^1$.

Clearly $\lambda\lalp$ is the restriction to $V\lalp$ of a global morphism $\lambda$. By \cite{KimKreschPant}, we have that $h^1/h^0(c(\lambda)^\vee) = \nN_{Y \times \bP^1 / \mM_X^\circ}$.

By the properties of almost perfect obstruction theories and the compatibility diagrams \eqref{compat obs th}, the closed embeddings
$$h^1(c(\lambda\lalp)^\vee) \lr h^1(c(\kappa\lalp)^\vee)$$
glue to a closed embedding of sheaf stacks on $Y \times \bP^1$
$$\fn_{Y\times \bP^1 / \mM_X^\circ} \lr \kK.$$

The same argument works at the level of coarse moduli sheaves, where flatness stands for exactness of the pullback functor. Thus we deduce the equality
\begin{align} \label{loc 6.11}
    [\oO_{\fc_Y}] =  [\oO_{\fc_{Y/\cC_X}}] \in K_0(\kK).
\end{align}

The top row of \eqref{loc A.9} together with \eqref{loc A.4} and \eqref{loc A.5} give a commutative diagram
\begin{align*}
    \xymatrix{
    \kK \ar[r] \ar[d] & \Ob_X |_Y \oplus \Ob_{Y} \ar[r] \ar[d]^-{(\sigma|_Y, \sigma')} & \Ob_X|_Y (1) \ar[r] \ar[d]^-{\sigma|_Y(1)} & 0 \\
    \oO_{Y \times \bP^1}(-1) \ar[r] & \oO_{Y \times \bP^1} \oplus \oO_{Y \times \bP^1} \ar[r] & \oO_{Y \times \bP^1}(1) \ar[r] & 0
    }
\end{align*}
and therefore we obtain a (twisted) cosection $\bar{\sigma} \colon \kK \to L$, where $L$ is the line bundle $\oO_{Y \times \bP^1}(-1)$.

By \cite[Section~5]{KiemLiCosection}, $\fc_{Y \times \bP^1 / \mM_X^\circ}$ has reduced support in $\kK(\bar{\sigma})$.
The fiber of $\kK$ over $\lbrace 0 \rbrace \in \bP^1$ is $\Ob_{X}|_Y \oplus N_{Z/W}|_Y$ while the fiber over $\lbrace 1 \rbrace \in \bP^1$ is $\Ob_{Y}$. The cosection $\bar{\sigma}$ also restricts to the corresponding cosections over these two fibers. Therefore, we obtain by \eqref{loc 6.11}
\begin{align*}
    [\oO_{Y,\loc}\virt] = 0_{\Ob_{Y}, \sigma'}^! [\oO_{\fc_Y}] = 0_{\Ob_{X}|_Y \oplus N_{Z/W}|_Y, \sigma|_Y}^! [\oO_{\fc_{Y/\cC_X}}]
\end{align*}
Now, since the usual properties of Gysin maps hold by working on local charts of the corresponding sheaf stacks, we have
\begin{align*}
    0_{\Ob_{X}|_Y \oplus N_{Z/W}|_Y, \sigma|_Y}^! [\oO_{\fc_{Y/\cC_X}}] = 0_{\Ob_X|_Y, \sigma|_Y}^! 0_{N_{Z/W}|_Y}^! [\oO_{\fc_{Y/\cC_X}}] = 0_{\Ob_X |_Y, \sigma|_Y}^! v^! [\oO_{\fc_X}]
\end{align*}
By the next proposition, we have $0_{\Ob_X|_Y, \sigma|_Y}^! v^! = v^! 0_{\Ob_X, \sigma}^!$, which implies the desired equality.
\end{proof}

\begin{prop}
For any coherent sheaf $\aA$ on $\Ob_X$ supported on $\Ob_X(\sigma)$, $0_{\Ob_X|_Y, \sigma|_Y}^! v^! [\aA] = v^! 0_{\Ob_X, \sigma}^! [\aA]$.
\end{prop}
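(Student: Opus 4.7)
The plan is to verify the equality by splitting $0^!_{\Ob_X, \sigma}$ into its two pieces according to Definition~\ref{39}---the main blow-up contribution $(\tau|_D)_*\bigl(D^\vee \cdot 0^!_{\cF'}[f^*\tau^*\aA]\bigr)$ and the error term $0^!_{\Ob_X|_{X(\sigma)}}R_\aA$---and then commuting $v^!$ past each operation used in their construction.

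First I would align the blow-up data on the two sides. Let $\tau_X \colon \ti X \to X$ be the blow-up along $X(\sigma)$ with exceptional divisor $D_X$, and $\tau_Y \colon \ti Y \to Y$ the analogous blow-up along $Y(\sigma')$. Since $\sigma'$ factors through $\sigma|_Y$ via the surjection $\Ob_Y \to \Ob_X|_Y$, the vanishing locus satisfies $Y(\sigma') = Y \times_X X(\sigma)$, and the universal property of blow-ups produces a canonical morphism $\ti Y \to \ti X$ over $v$ with $D_Y$ pulling back from $D_X$. Pulling back the short exact sequence $0 \to \cF' \to \tau_X^*\Ob_X \to \oO_{\ti X}(-D_X) \to 0$ under $v^!$ then produces its counterpart on $\ti Y$, so that the main-term construction on $Y$ becomes visibly the $v^!$-image of the one on $X$.

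For the main term I would commute $v^!$ past $\tau^*$ by flat base change on the complement of the blow-up centers combined with the excess intersection formula on the exceptional locus, past $f^*$ and $0^!_{\cF'}$ by Lemmas~\ref{4} and~\ref{14} applied to the vertical regular embedding $Y \times_X \ti X \hookrightarrow \ti X$, past the divisorial intersection $D^\vee \cdot (-)$ by associativity of refined intersection products, and finally past $(\tau|_D)_*$ by proper base change (Lemma~\ref{35}) combined with the projection formula. For the error piece, the adjunction morphism $\eta_\aA \colon \aA \to \tau_*f_*f^*\tau^*\aA$ and its higher derived analogues are natural under $v^!$-pullback, since each constituent functor satisfies base change as above, giving $v^!R_\aA = R_{v^!\aA}$ in $K_0(\Ob_X|_{Y(\sigma')})$; composing with $0^!_{\Ob_X|_{X(\sigma)}}$ and commuting past $v^!$ by Lemma~\ref{4} matches the error contributions on both sides.

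The main obstacle I anticipate is controlling the blow-up base change: the fiber product $Y \times_X \ti X$ need not coincide with $\ti Y$, and their exceptional divisors can differ where $v$ meets $X(\sigma)$ non-transversely. I would handle this by using the alternative description of $v^!$ via specialization to the normal cone recalled just before the statement, where all the required compatibilities reduce to standard double-deformation arguments insensitive to such excess contributions; alternatively one may work directly with the Koszul resolution $\oO_Z^W$ of $v_*\oO_Z$ and exploit that tensoring with $\oO_Z^W$ is exact on each local chart and commutes with all the operations entering Definition~\ref{39} on the nose.
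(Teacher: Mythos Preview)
The paper's own proof is a two-line citation: it notes that $v^!$ is tensoring with the locally free resolution $\oO_Z^W$ and then defers to \cite[Lemma~5.6]{KiemLiKTheory} (the same commutativity for the bundle-level localized Gysin map of \S\ref{S4.1}) and \cite[Proposition~4.3]{KiemSavvas} (to pass between local charts and the sheaf stack), together with the construction in \S\ref{Scos}. Your last-sentence ``alternative''---work directly with $\oO_Z^W$, which on each chart is a complex of free modules commuting with every operation in Definition~\ref{39}---is precisely how the paper frames it. Your decomposition into the blow-up contribution and the error term $R_\aA$, commuting $v^!$ past each step, is an explicit unpacking of what those cited references contain, adapted chart-by-chart to the sheaf-stack setting.

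The obstacle you anticipate is the substantive point, and it does not in fact disappear under the Koszul reformulation: the left-hand side $0^!_{\Ob_X|_Y,\sigma|_Y}$ is by definition built from the blow-up $\ti Y$ of $Y$ along $Y(\sigma')$, not from $Y\times_X\ti X$, so one still has to argue that the two give the same answer. That independence is exactly what \cite[Lemma~5.6]{KiemLiKTheory} supplies in the bundle case (cf.\ also the flexibility noted in Remark~\ref{45}), and the present paper simply invokes it rather than reproving it. So your plan is correct and considerably more explicit than the paper's, with the caveat that the blow-up compatibility you flag is resolved by the cited external lemma rather than by the base-change and excess-intersection manipulations you propose in your third paragraph; those alone would not close the gap.
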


\begin{proof}
Since the pullback $v^!$ is given by tensoring with the resolution $\oO_Z^W$, the equality follows by combining the proofs of \cite[Lemma~5.6]{KiemLiKTheory} and \cite[Proposition~4.3]{KiemSavvas} together with the construction of the cosection localized Gysin map in \S\ref{Scos}.
\end{proof}

\bibliography{Master}
\bibliographystyle{alpha}

\end{document}